\newtheorem{theorem}[equation]{Theorem}
\newtheorem{lemma}[equation]{Lemma}
\newtheorem{proposition}[equation]{Proposition}
\newtheorem{corollary}[equation]{Corollary}
\theoremstyle{definition}
\newtheorem{prop-con}[equation]{Proposition--Construction}
\newtheorem*{observation}{Observation}
\newtheorem{definition}[equation]{Definition}
\newtheorem{definition/lemma}[equation]{Definition--Lemma}
\newtheorem{example}[equation]{Example}
\theoremstyle{remark}
\title[Irreducibility of crystalline loci]{Irreducibility of some crystalline loci with irregular Hodge--Tate weights}
\author[R. Bartlett]{Robin Bartlett}
\address{University of Glasgow, United Kingdom}
\email{robin.bartlett.math@gmail.com}
\subjclass{Primary 11F80, Secondary 14M15}
\begin{document}
	
		\begin{abstract}
		We show that loci of crystalline representations of $G_K$ for $K/\mathbb{Q}_p$ an unramified extension are irreducible when the Hodge--Tate weights are fixed and sufficiently small. This was previously known for weights in the interval $[-p,0]$ and in this paper we show how that this bound can be relaxed provided the Hodge--Tate weights are sufficiently irregular at certain embeddings. This is motivated by the desire to extend the conjectures of Breuil--M\'ezard on loci of potentially crystalline representations to irregular weights.
	\end{abstract}
	\maketitle
	\section{Introduction}
	
	In this paper we demonstrate how techniques used to study moduli spaces of crystalline $p$-adic Galois representations with small Hodge--Tate weights relative to $p$ also apply to weights in much larger ranges whenever the weights are suitably irregular. For example, we prove:
	
	\begin{theorem}[see Corollary~\ref{cor-irred}]\label{thm-1.1}
		Let $K/\mathbb{Q}_p$ be a finite unramified  extension and let $\mathcal{X}_d^\mu$ denote the $p$-adic algebraic stack over $\operatorname{Spf}\overline{\mathbb{Z}}_p$ classifying $d$-dimensional crystalline representations of $\operatorname{Gal}(\overline{K}/K) = G_K$ with Hodge type $\mu$. For each embedding $\tau:K \hookrightarrow \overline{\mathbb{Q}}_p$ assume there are integers $N_\tau \geq 0$ so that 
		\begin{itemize}
			\item The $\tau$-th part of $\mu$ is contained in the interval $[-p^{N_\tau+1},0]$
			\item The $\tau \circ \varphi^n$-th part of $\mu$ is $0$ whenever $1 \leq n \leq N_\tau$ (here $\varphi$ denotes the automorphism of $K$ lifting the $p$-th power map on its residue field)
		\end{itemize}
		If $N_\tau=0$ for all $\tau$ assume also that $\mu$ is contained in $[-p+1,0]$ for at least one $\tau$.\footnote{This is to avoid the Steinberg weight where irreducibility is known to fail.} Then $\mathcal{X}_d^\mu \otimes_{\overline{\mathbb{Z}}_p} \overline{\mathbb{F}}_p$ is irreducible.
	\end{theorem}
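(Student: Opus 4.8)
The plan is to convert the statement into one about moduli of Breuil--Kisin modules, and then to exploit the vanishing of $\mu$ along the Frobenius orbits in order to reduce --- after passing to the special fibre --- to the already-known case of Hodge--Tate weights in $[-p,0]$.

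Write $\mathfrak{S} = W(k)[[u]]$ with $k$ the residue field of $K$, where the Frobenius acts on $W(k)$ through the Witt-vector Frobenius and by $u \mapsto u^p$, so that $E(u) = u - p$ is Eisenstein for the uniformiser $p$ of the unramified field $K$. By Kisin's theory a crystalline representation of $G_K$ of Hodge type $\mu$ has an associated Breuil--Kisin module $\mathfrak{M}$; after enlarging the coefficients and decomposing along the embeddings one has $\mathfrak{M} = \bigoplus_\tau \mathfrak{M}_\tau$, with $\varphi_{\mathfrak{M}}$ permuting the $\mathfrak{M}_\tau$ cyclically and the elementary divisors of $\varphi_{\mathfrak{M}}$ on the $\tau$-component (as powers of $E(u)$) prescribed by the $\tau$-part of $\mu$. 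Assembling these modules, together with whatever extra structure is needed to pin down crystallinity, into a $p$-adic formal stack $\mathcal{C}^\mu$, one has that $\mathcal{X}^\mu_d$ is the scheme-theoretic image of $\mathcal{C}^\mu$ under a proper morphism. It therefore suffices to prove that $\mathcal{C}^\mu \otimes_{\overline{\mathbb{Z}}_p} \overline{\mathbb{F}}_p$ is irreducible.

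The crucial use of the hypotheses is the following. Over $\overline{\mathbb{F}}_p$ one has $E(u) \equiv u$, so the shape at $\tau$ becomes a set of elementary divisors $u^{a_{\tau,1}}, \dots, u^{a_{\tau,d}}$ with $0 \le a_{\tau,i} \le p^{N_\tau+1}$. Meanwhile, because the $\tau\varphi^n$-part of $\mu$ is $0$ for $1 \le n \le N_\tau$, the Frobenius $\varphi_{\mathfrak{M}}$ is an isomorphism on those $N_\tau$ components, so one may absorb them into the $\tau$-th Frobenius and regard the $\tau$-component of $\mathfrak{M}$ as equipped with a single semilinear map over $u \mapsto u^{p^{N_\tau+1}}$ whose elementary divisors are bounded by $u^{p^{N_\tau+1}}$. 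This is exactly the profile of a Breuil--Kisin module of shape in $[-p,0]$ relative to the dilated Frobenius, and the corresponding moduli problem is governed by the same affine-Grassmannian Schubert varieties (equivalently, local models of Pappas--Zhu type for the dominant, but possibly non-regular, cocharacter $\mu$) as in that case --- in particular it is realised as an iterated bundle over a point with irreducible fibres, hence irreducible. Reassembling over all $\tau$ identifies the reduced space underlying $\mathcal{C}^\mu \otimes \overline{\mathbb{F}}_p$ with one to which the known irreducibility for weights in $[-p,0]$ applies; pushing forward along the proper morphism then gives irreducibility of $\mathcal{X}^\mu_d \otimes \overline{\mathbb{F}}_p$. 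If every $N_\tau = 0$ the dilation is trivial and one is literally in the $[-p,0]$ case, where the extra assumption that $\mu \subseteq [-p+1,0]$ for some $\tau$ is precisely what excludes the Steinberg weight, accounting for the footnote.

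I expect the main obstacle to be making the dilation step rigorous: one must show that ``absorbing the identity components into the Frobenius'' is an isomorphism, or at least a universal homeomorphism, of the relevant moduli stacks, compatibly with the proper morphism to the Galois side \emph{and} with the crystalline condition. The last point is where largeness of the weights would ordinarily bite --- a general finite-height Breuil--Kisin module of shape $\mu$ need not come from a crystalline representation --- so one has to check that the vanishing of $\mu$ along the Frobenius orbit forces this discrepancy to disappear modulo $p$, ensuring that the scheme-theoretic image of $\mathcal{C}^\mu$ is exactly $\mathcal{X}^\mu_d$ rather than something larger. A further technical difficulty, independent of the weights being large, is that $\mu$ need not be regular, so one cannot simply quote local-model theory in the regular case and must instead establish the needed irreducibility, flatness, and normality for the Schubert varieties (or Pappas--Zhu-type local models) attached to arbitrary dominant cocharacters.
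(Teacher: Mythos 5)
Your guiding heuristic --- that vanishing of $\mu$ along the Frobenius orbit lets one regard the $\tau$-component of Frobenius as semilinear for $u\mapsto u^{p^{N_\tau+1}}$ --- is exactly the one the paper offers as motivation in its introduction. The gap is that you treat the resulting ``dilation'' as a formal reduction to the known $[-p,0]$ case, and no such reduction exists: the dilated object is not the Breuil--Kisin module of a crystalline representation of any $p$-adic field, and the known irreducibility for weights in $[-p,0]$ is a theorem about genuine crystalline representations whose proof uses the crystalline condition itself (integrality of the monodromy operator, construction of the semilinear $G_K$-action). That condition does not transport along your dilation, so nothing can be quoted. You identify this yourself as ``the main obstacle,'' but overcoming it is the entire content of the argument, not a technical afterthought. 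Concretely, two statements must be proved from scratch with the extra powers of $p$ tracked through the $N_\tau$ intermediate embeddings: (i) the crystalline locus mod $p$ lands in the shape-$\mu$ locus $Z^\mu_{d,\mathbb{F}}$ --- obtained by iterating $p\varphi\circ N_\nabla = N_\nabla\circ\varphi$ exactly $N_\tau+1$ times along an $(N_\tau)$-adapted basis, yielding divisibility of the Taylor coefficients of Kisin's comparison matrix by $\tfrac{p^{N_\tau}}{i}\kappa(\pi)^{p^{N_\tau+1}-i}$ up to order $p^{N_\tau+1}$ (Propositions~\ref{prop-intxi} and~\ref{prop-specialfibre}); and (ii) conversely, every mod-$p$ point of the shape-$\mu$ locus carries a unique crystalline $G_K$-action --- proved by a contraction argument for $\Omega_\sigma(M)=C\varphi(M)\sigma(C^{-1})$, whose topological nilpotence again rests on the $u$-adic gain of a factor $p$ at each embedding where $C$ is invertible (Proposition~\ref{prop-Galois}). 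Neither follows from the $[-p,0]$ case by transport of structure, so as written your argument has no engine.

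A secondary point: your concern about irregular cocharacters and Pappas--Zhu local models is misplaced in the unramified setting. Since $K=K_0$, the polynomial $E(u)$ is linear in each embedding component and the relevant integral model $M_\mu$ is literally a partial flag variety $\operatorname{FL}_{\mu,\mathcal{O}}$ (Lemma~\ref{lem-MmuK=K0}), smooth and irreducible; irregularity of $\mu$ costs nothing here, and no Schubert-variety singularities arise. The endgame is then a sandwich of $Z^\mu_{d,\mathbb{F}}$ between torsors over $M_\mu\otimes_{\mathcal{O}}\mathbb{F}$ plus a dimension count forcing the closed immersion $Y^\mu_d\otimes_{\mathcal{O}}\mathbb{F}\hookrightarrow Z^\mu_{d,\mathbb{F}}$ to be an isomorphism; your final passage to $\mathcal{X}^\mu_d$ via scheme-theoretic images is correct and matches the paper.
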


	When all $N_\tau =0$ the theorem considers Hodge--Tate weights contained in $[-p,0]$. In this setting the claimed irreducibility was already known (see for example \cites{BBHKLLSW,B21,B23}) and the range $[-p,0]$ is sharp. The new content in this paper is that if the Hodge--Tate weights are zero at embeddings $\tau \circ \varphi^n,\tau\circ \varphi^{n-1},\ldots, \tau \circ \varphi$-th then the geometry of $\mathcal{X}_d^\mu$ behaves as it does in the $[-p,0]$ range whenever the Hodge--Tate weights at $\tau$ are in the larger range $[-p^{n+1},0]$. In fact, we show a stronger statement, namely that certain moduli spaces of crystalline Breuil--Kisin modules with Hodge--Tate weights in these ranges are smooth and irreducible (see Theorem~\ref{thm-main}).
	
	The motivation for Theorem~\ref{thm-1.1} comes from the desire to understand versions of the Breuil--M\'ezard conjecture for irregular weights, and Theorem~\ref{thm-1.1} can be viewed as giving some insight into the shape of such a conjecture. To explain this recall that the Breuil--M\'ezard conjecture \cite{BM02} describes (at the level of cycles) the mod $p$ loci of potentially crystalline representations with regular (i.e. consisting of distinct integers) Hodge--Tate weights. This is done in terms of the representation theory of $\operatorname{GL}_n(k)$ (for $k$ the residue field of $K$) and a finite collection of ``explicit'' cycles $Z_\sigma$ indexed by the irreducible representations of $\operatorname{GL}_n(k)$. When the highest weight of $\sigma$ is in the lowest alcove (and is non-Steinberg) then the conjectures identify $Z_\sigma$ with the cycle of a suitable $\mathcal{X}_d^\mu \otimes_{\mathcal{O}} \mathbb{F}$ with $\mu$ contained in the range $[-p,0]$. As a result one deduces irreducibility of such $Z_\sigma$ (and when $d=2$ these exhaust all the $Z_\sigma$'s besides the Steinberg one).
	
	Currently the shape of a Breuil--M\'ezard conjecture in irregular weight is completely unknown. However, one could imagine a similar formulation in which the mod $p$ loci of potentially crystalline representations with irregular Hodge--Tate weights are described in terms of a finite collection of cycles (necessarily of smaller dimension than the $Z_\sigma$) and some, currently mysterious, representation theoretic input. This viewpoint is corroborated by some recent calculations in \cite{BBHKLLSW}, as well as the generalised Serre weight conjectures in \cite{DS}. Theorem~\ref{cor-irred} suggests that, whatever shape this representation theoretic input takes, its simplest building blocks should occur with weights bounded of order $p^{N_\tau+1}$, for $N_\tau$ the measure of irregularity as in the theorem.
	
	We conclude by briefly discussing the methods used to establish Theorem~\ref{thm-1.1}. In spirit they are identical to those employed in \cite{B21,B23} and aim to control moduli spaces of crystalline Breuil--Kisin modules of weight $\mu$ in terms of the relative position of the image of Frobenius. At least when $K = K_0$, this leads to a description of these moduli spaces in terms of a flag variety, which is the essential source of irreducibility. To understand how these techniques apply in weights beyond $[-p,0]$ consider the prototypical example where $[K:\mathbb{Q}_p] =n$ and the Hodge--Tate weights are zero at all embeddings besides one. Then the matrix of Frobenius on any Breuil--Kisin module with these weights, which is usually semilinear for an endomorphism lifting the $p$-th power map mod $p$, can be interpreted as semilinear for an endomorphism lifting the $p^n$-th power map. These additional powers of $p$ mean the various ``extension by Frobenius'' techniques from \cite{B21,B23} become powerful enough to overcome the bound $[-p,0]$.
	
	Essentially all the calculations in the paper are, just as in \cite{B23}, done for ramified extensions. The requirement that $K$ be unramified only arises when analysing certain subschemes inside the affine grassmannian; the problem being that, because here there is no Frobenius, irregularity assumptions no longer give any additional leverage. However, in the unramified case these subschemes are much simpler and identify flag varieties so there is no issue. Analogous results in the ramified setting would require an extension of \cite[9.1]{B23} which would certainly need a new approach (at least when $n>2$).  We also note that, with alterations made as in \cite{B23}, analogous statements to Theorem~\ref{thm-1.1} will also hold for moduli of crystalline representation valued in any split reductive group.

\section{Notation}\label{sec-notation}
 Let $K / \mathbb{Q}_p$ be a finite extension with residue field $k$ and completed algebraic closure $C$. Let $K_0 \subset K$ denote the maximal unramified sub-extension. Fix $\pi \in K$ a choice of uniformiser and set $E(u) = u^e +\ldots \in W(k)[u]$ the minimal polynomial of $\pi$ over $W(k)$. Thus $e$ denotes the ramification degree of $K$ over $\mathbb{Q}_p$. We also fix compatible systems $\pi^{1/p^\infty}$ and $\epsilon_\infty$ in $C$ with $\pi^{1/p^\infty}$ a sequence of $p$-th power roots of $\pi$ and $\epsilon_\infty$ a system of primitive $p$-th power roots of $1$.

Fix $\mathcal{O}$ the ring of integers in a finite extension of $\mathbb{Q}_p$, and assume this extension is sufficiently large that it contains all Galois conjugates of $K$. This ensures any $K \otimes_{\mathbb{Z}_p} \mathcal{O}$-module $M$ decomposes as a product $\prod_{\kappa:K \hookrightarrow \mathcal{O}[\frac{1}{p}]} M_\kappa$ according to the action of $K$ on $M$. Similarly, any $W(k) \otimes_{\mathbb{Z}_p} \mathcal{O}$ module $M$ decomposes as a product $\prod_{\tau:K_0 \hookrightarrow \mathcal{O}[\frac{1}{p}]} M_\tau$ (this crucially uses that $W(k)$ is unramified over $\mathbb{Z}_p$ and would not hold for $\mathcal{O}_K$ unless $K =K_0$). Set $\mathbb{F}$ equal the residue field of $\mathcal{O}$ and $\mathfrak{m}_{\mathcal{O}}$ its maximal ideal.

\section{Crystalline Breuil--Kisin modules}\label{sec-crysBK}

\begin{definition}
For any $p$-adically complete $\mathcal{O}$-algebra $A$ we write $Z_d(A)$ for the groupoid of rank $d$ Breuil--Kisin modules over $A$, i.e. the groupoid of projective $\mathfrak{S}_A := (W(k) \otimes_{\mathbb{Z}_p} A)[[u]]$-modules of rank $d$ equipped with an isomorphism
$$
\varphi:\varphi^*\mathfrak{M}[\tfrac{1}{E(u)}] \cong \mathfrak{M}[\tfrac{1}{E(u)}]
$$
Here $\varphi$ denotes the $A$-linear endomorphism which lifts the $p$-th power map on $W(k)$ and sends $u \mapsto u^p$, and we frequently view the above isomorphism as a $\varphi$-semilinear map $\mathfrak{M} \rightarrow \mathfrak{M}[\frac{1}{E(u)}]$ via $m \mapsto \varphi(m \otimes1)$. Any homomorphism $A \rightarrow B$ induces a functor $Z_d(B) \rightarrow Z_d(A)$ by pullback along $\mathfrak{S}_A \rightarrow \mathfrak{S}_B$, and this makes $Z_d$ into a category fibred over $\operatorname{Spf}\mathcal{O}$.
\end{definition}

When $A$ is additionally a $p$-adically complete $\mathcal{O}$-algebra topologically of finite type then we also consider the $\mathfrak{S}_A$-algebra $A_{\operatorname{inf},A}$ defined as in e.g. \cite[2.2]{EG19} or \cite[14.1]{B23}. This is equipped with an $A$-linear Frobenius automorphism extending that on $\mathfrak{S}_A$ and an $A$-linear continuous action of $G_K$ commuting with the Frobenius. 

\begin{definition}
A crystalline $G_K$-action on $\mathfrak{M}$ is a continuous $A_{\operatorname{inf},A}$-semilinear $\varphi$-equivariant action of $G_K$ on $\mathfrak{M} \otimes_{\mathfrak{S}_A} A_{\operatorname{inf},A}$ satisfying
	\begin{equation}\label{eq-Galoisaction}
		\sigma(m)-m \in \mathfrak{M} \otimes_{\mathfrak{S}_{A}} u \varphi^{-1}(\mu) A_{\operatorname{inf},A}, \qquad \sigma_\infty(m) - m = 0
	\end{equation}
	for all $m \in \mathfrak{M}$ and all $\sigma  \in G_K,\sigma_\infty \in G_{K_\infty}$. Write $Y_d(A)$ for the groupoid consisting of $\mathfrak{M} \in Z_d(A)$ equipped with a crystalline $G_K$-action, and $Y_d$ for the resulting limit preserving category fibred over $\operatorname{Spf}\mathcal{O}$.
\end{definition}

	If $A$ is finite and flat over $\mathcal{O}$ then any $\mathfrak{M} \in Y_d(A)$ gives rise to a crystalline representation $T$ on a finite free $A$-module of rank $d$ \cite[16.2]{B23}. Associated to $T$ is the filtered $\varphi$-module $D =(T\otimes_{\mathbb{Z}_p} B_{\operatorname{crys}})^{G_K}$. This is a $K_0 \otimes_{\mathbb{Z}_p} A$-module of rank $d$ equipped with a $\varphi$-semilinear bijection and a separated, exhaustive, and decreasing filtration\footnote{i.e. a filtration $\ldots \subset \operatorname{Fil}^{i+1} \subset \operatorname{Fil}^i \subset \operatorname{Fil}^{i-1} \subset \ldots$ with $\operatorname{Fil}^N =0$ for $N>>0$ and $\operatorname{Fil}^M = D_K$ for $M <<0$. } on $D_K = D \otimes_{K_0} K$ by $K \otimes_{\mathbb{Z}_p} A$-submodules with $A$-projective graded pieces \cite[1.3.4]{KisFF}. This filtration arises from an identification $D_K = (T \otimes_{\mathbb{Z}_p} B_{\operatorname{dR}})^{G_K}$ and is defined by
$$
\operatorname{Fil}^i(D_K) = (T \otimes t^iB_{\operatorname{dR}}^+)^{G_K}
$$ 
for $t \in B_{\operatorname{dR}}^+$ a generator of its maximal ideal and we say $T$ has Hodge type $\mu$ if the  filtration on $D_K = D \otimes_{K_0} K$ has associated grading of type $\mu$.

\begin{example}
	If $T$ is the one dimensional representation on which the cyclotomic character acts then $0 = \operatorname{Fil}^0(D_K) \subset \operatorname{Fil}^{-1}(D_K) = D_K$ and so the grading on the Hodge type of $T$ is concentrated in degrees $[-1,0]$.
\end{example}
\begin{proposition}
	For each Hodge type $\mu$ there exists a closed subfunctor $Y^\mu_d$ of $Y_d$ which is represented by an $\mathcal{O}$-flat $p$-adic algebraic formal stack (in the sense \cite[A7]{EG19}) of topologically finite type over $\mathcal{O}$. It is uniquely determined by the property that its groupoid of $A$-valued points, for any finite flat $\mathcal{O}$-algebra $A$, is canonically equivalent to the full subcategory $Y_d^\mu(A)$ consisting of those $\mathfrak{M} \in Y_d(A)$ whose associated crystalline representation has Hodge type $\mu$.
\end{proposition}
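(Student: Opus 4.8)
The plan is to realise $Y_d^\mu$ inside a moduli stack of Breuil--Kisin modules of bounded height, single out the relevant locus by the relative position of Frobenius, and then pass to a scheme-theoretic closure to force $\mathcal{O}$-flatness. The first step is a reduction to bounded height: if $A$ is finite flat over $\mathcal{O}$ and $\mathfrak{M}\in Y_d(A)$ has associated crystalline representation of Hodge type $\mu$, then the relative position of $\varphi$ on $\mathfrak{M}$ is pinned down by \eqref{eq-Galoisaction} together with the Hodge--Tate weights, so the $E$-height of $\mathfrak{M}$ is bounded in terms of $\mu$. Writing $Y_d^{[a,b]}$ for the full subcategory of $Y_d$ of objects with $E(u)^b\mathfrak{M}\subseteq \varphi(\varphi^*\mathfrak{M})\subseteq E(u)^a\mathfrak{M}$, it therefore suffices to construct $Y_d^\mu$ as a closed substack of $Y_d^{[a,b]}$ once $[a,b]$ contains all the $\kappa$-parts of $\mu$; and by \cite{B23} (see also \cite{EG19}), $Y_d^{[a,b]}$ is represented by a $p$-adic algebraic formal stack of topologically finite type over $\mathcal{O}$, the crystalline condition of \eqref{eq-Galoisaction} cutting out a closed substack of the corresponding height-bounded substack of $Z_d$.

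Next I would measure the relative position of the universal Frobenius over $Y_d^{[a,b]}$. Using the decomposition $\mathfrak{S}_A = \prod_{\tau:K_0\hookrightarrow\mathcal{O}[\frac1p]}A[[u]]$ and the factorisation $E(u)=\prod_{\kappa\mid\tau}(u-\kappa(\pi))$ over $\mathcal{O}[\tfrac1p]$, the isomorphism $\varphi:\varphi^*\mathfrak{M}[\tfrac1{E(u)}]\cong\mathfrak{M}[\tfrac1{E(u)}]$ defines, for each embedding $\kappa:K\hookrightarrow\mathcal{O}[\tfrac1p]$, a point of a local affine Grassmannian for $\operatorname{GL}_d$; concretely it is governed by the Fitting ideals of the finite $\mathfrak{S}_A/E(u)\cong\mathcal{O}_K\otimes_{\mathbb{Z}_p}A$-module $\mathfrak{M}/\varphi(\varphi^*\mathfrak{M})$. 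For a fixed $\mu$, the requirement that at every $\kappa$ the relative position be $\le\mu_\kappa$ in the dominance order translates into finitely many containments of Fitting ideals in prescribed powers of $(u-\kappa(\pi))$, so it is a closed condition and cuts out a closed substack $Y_d^{\le\mu}\subseteq Y_d^{[a,b]}$. The complementary requirement that some $\kappa$-relative-position be strictly smaller than $\mu_\kappa$ is the union of the finitely many closed substacks $Y_d^{\le\mu'}$ with $\mu'<\mu$, so the locus $Y_d^{=\mu}\subseteq Y_d^{\le\mu}$ where the relative position equals $\mu_\kappa$ at every $\kappa$ is open.

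Finally I would set $Y_d^\mu$ equal to the scheme-theoretic closure inside $Y_d^{\le\mu}$ of the rigid generic fibre of $Y_d^{=\mu}$. By construction $Y_d^\mu$ is $\mathcal{O}$-flat, is closed in $Y_d^{[a,b]}$ and hence in $Y_d$, and is $p$-adic algebraic of topologically finite type over $\mathcal{O}$. To identify its finite-flat points with $Y_d^\mu(A)$ I would invoke two facts, both available through \cite{B23} and \cite{EG19}: first, that for $\mathcal{O}'$ finite flat over $\mathcal{O}$ a crystalline Breuil--Kisin module over $\mathcal{O}'$ has Hodge type $\mu$ if and only if the relative position of its Frobenius equals $\mu_\kappa$ at every $\kappa$ (here \eqref{eq-Galoisaction} is what rigidifies the lattice enough for the ``if'' direction); and second, that over $\mathcal{O}[\tfrac1p]$ the Hodge type is locally constant in families of crystalline representations, so that $Y_d^{=\mu}[\tfrac1p]$ is already open \emph{and} closed in $Y_d^{\le\mu}[\tfrac1p]$ and passing to the scheme-theoretic closure adds nothing over the generic fibre. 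Granting these, for $A$ finite flat over $\mathcal{O}$ an object of $Y_d^{[a,b]}(A)$ lies in $Y_d^\mu(A)$ if and only if its generic fibre lies in $Y_d^{=\mu}[\tfrac1p]$ (using flatness of $A$ to descend from the generic fibre), if and only if its associated crystalline representation has Hodge type $\mu$; combined with the boundedness reduction this gives the asserted description of $Y_d^\mu(A)$ for all finite flat $A$. Uniqueness is then automatic, since an $\mathcal{O}$-flat formal stack of topologically finite type is the scheme-theoretic closure of its rigid generic fibre, which in turn is determined by its points valued in finite extensions of $\mathcal{O}$.

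The step I expect to be the main obstacle is the first of the two $p$-adic Hodge theory inputs above: the precise dictionary identifying the Hodge type of the crystalline representation attached to $\mathfrak{M}$ with the relative position of $\varphi$, including the verification that \eqref{eq-Galoisaction} forces $\mathfrak{M}$ to be of the expected size. Everything else is either formal (scheme-theoretic closures, $\mathcal{O}$-flatness of a closure, Fitting-ideal descriptions of Schubert conditions) or a matter of carefully tracking the $\tau$- and $\kappa$-indexed decompositions together with the factorisation of $E(u)$ over the Galois conjugates of $K$.
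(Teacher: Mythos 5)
Your construction is correct and is essentially the one in the references the paper cites for this proposition (the paper's own ``proof'' is just the citation to \cite[16.3]{B23} and \cite[4.8.2]{EG19}): reduce to bounded $E$-height, characterise the Hodge type on the generic fibre via the relative position of Frobenius (equivalently the filtration, by Kisin's comparison, which is Proposition~\ref{prop-kis} here), use local constancy of the Hodge type to see this locus is open and closed there, take the scheme-theoretic closure to get $\mathcal{O}$-flatness, and deduce uniqueness from density of finite-flat points in the generic fibre. The only imprecision is that your integral closed substack $Y_d^{\le\mu}$ is not literally defined by powers of $(u-\kappa(\pi))$ when $e>1$ (the $\kappa$-decomposition only exists after inverting $p$), but this is harmless since your final object is a closure of a generic-fibre locus and is closed in $Y_d^{[a,b]}$ regardless.
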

\begin{proof}
See \cite[16.3]{B23} or \cite[4.8.2]{EG19}.	
\end{proof}

\section{Affine grassmannian}\label{sec-affinegrass}
A useful means of understanding $Z_d$ and $Y_d$ is via the relative position of $\mathfrak{M}$ and $\varphi^*\mathfrak{M}$. With this in mind we introduce the ind-scheme $\operatorname{Gr}$ over $\operatorname{Spec}\mathcal{O}$ whose $A$-valued points classify rank $d$ projective $(W(k) \otimes_{\mathbb{Z}_p} A)[u]$-modules  together with a choice of basis after inverting $E(u)$ (equivalently rank $d$ projective $(W(k) \otimes_{\mathbb{Z}_p} A)[u]$-submodules of $(W(k) \otimes_{\mathbb{Z}_p} A)[u,\frac{1}{E(u)}]^d$). Recall the Beauville--Laszlo glueing lemma \cite{BL} which asserts that, in the above description, the ring $(W(k) \otimes_{\mathbb{Z}_p} A)[u]$ can be replaced by its $E(u)$-adic completion. In particular, if $A$ is $p$-adically complete then
\begin{itemize}
	\item the $A$-valued points of $\operatorname{Gr}$ can be viewed as projective modules over $\mathfrak{S}_A$,
	\item the $A[\frac{1}{p}]$-valued points can be viewed as projective modules over the $E(u)$-adic completion of $(K_0 \otimes_{\mathbb{Z}_p} A)[u]$, which we denote $\widehat{\mathfrak{S}}_{A[\frac{1}{p}]}$.
\end{itemize}
The first bullet point provides the link between $Z_d$ and $\operatorname{Gr}$. There is a diagram
$$
\begin{tikzcd}
	& \ar[dl,"\Gamma"] \widetilde{Z}_d \ar[dr,"\Psi"] & \\
	Z_d & & \operatorname{Gr}
\end{tikzcd}
$$ 
where we write $\operatorname{Gr}$ also for its $p$-adic completion and $\widetilde{Z}_d$ denotes the category fibred over $\operatorname{Spf}\mathcal{O}$ whose $A$-points classify pairs $(\mathfrak{M},\iota)$ with $\mathfrak{M} \in Z_d(A)$ and $\iota$ a choice of $\mathfrak{S}_A$-basis. The map $\Gamma$ forgets the choice of basis and $\Psi(\mathfrak{M},\iota) = \mathfrak{M}$ together with the basis $\varphi(\iota)$ of $\mathfrak{M}[\frac{1}{E(u)}]$. Observe that $\Gamma$ and $\Psi$ are respectively torsors for two actions of the formal group scheme $L^+\operatorname{GL}_d$ over $\operatorname{Spf}\mathcal{O}$ with $A$-points $\operatorname{GL}_d(\mathfrak{S}_A)$
\begin{equation}\label{eq-twoactions}
g \cdot_{\varphi} (\mathfrak{M},\iota) = (\mathfrak{M},\iota g), \qquad g \cdot_{\operatorname{trans}} (\mathfrak{M},\iota) = (\mathfrak{M}_g,\iota)
\end{equation}
where $\mathfrak{M}_g = \mathfrak{M}$ as $\mathfrak{S}_A$-modules but with the twisted Frobenius $\varphi_g(\iota) = \iota g C$ for $C$ defined by $\varphi(\iota) = \iota C$. If $L\operatorname{GL}_d$ denotes the functor over $\operatorname{Spf}\mathcal{O}$ given by $A \mapsto \operatorname{GL}_d(\mathfrak{S}_A[\frac{1}{E(u)}])$ then $(\mathfrak{M},\iota) \mapsto C$ for $\varphi(\iota) =\iota C$ identifies $\widetilde{Z}_d \cong L\operatorname{GL}_d$. Under this identification $\cdot_\varphi$ corresponds to the action $g \cdot_\varphi C = g C\varphi(g^{-1})$ and $\cdot_{\operatorname{trans}}$ to the action $g \cdot_{\operatorname{trans}} C = g C$.

\begin{lemma}\label{lem-filt}
	Let $\mu$ be a Hodge type and $\operatorname{FL}_\mu$ the projective $\mathcal{O}[\frac{1}{p}]$-scheme whose $A$-points classify separated, exhaustive, and descending filtrations on $(K\otimes_{\mathbb{Q}_p} A)^d$ with associated graded of type $\mu$. Then
	$$
	\operatorname{Fil}^\bullet \mapsto \sum_{i \in \mathbb{Z}} \operatorname{Fil}^i \otimes_{K \otimes_{\mathbb{Q}_p} A} E(u)^{-i} \widehat{\mathfrak{S}}_A \subset \widehat{\mathfrak{S}}_A[\tfrac{1}{E(u)}]^d
	$$
	defines a closed immersion $\operatorname{FL}_\mu \rightarrow \operatorname{Gr}[\frac{1}{p}]$.
\end{lemma}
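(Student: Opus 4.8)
The plan is to make the target of the map explicit by decomposing along the embeddings of $K$, then to recognise the map as the classical ``lattice attached to a filtration'' morphism from a product of partial flag varieties into a product of affine Grassmannians for $\operatorname{GL}_d$, and finally to deduce that it is a closed immersion from the fact that a proper monomorphism of schemes is a closed immersion.

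First I would unwind the target. Since $\mathcal{O}$ contains all Galois conjugates of $K$, for any $\mathcal{O}[\tfrac1p]$-algebra $A$ we have $K_0 \otimes_{\mathbb{Q}_p} A = \prod_\kappa A$ and $K \otimes_{\mathbb{Q}_p} A = \prod_\kappa A$, the products running over the embeddings $\kappa\colon K \hookrightarrow \mathcal{O}[\tfrac1p]$, and $E(u)$ — which is separable with all its roots $\pi_\kappa := \kappa(\pi)$ in $\mathcal{O}[\tfrac1p]$ — decomposes accordingly, so that $\widehat{\mathfrak{S}}_A \cong \prod_\kappa A[[u-\pi_\kappa]]$ and $\widehat{\mathfrak{S}}_A[\tfrac1{E(u)}] \cong \prod_\kappa A((u-\pi_\kappa))$, with $E(u)$ equal to $(u-\pi_\kappa)$ times a unit in the $\kappa$-th factor. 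Under these identifications $\operatorname{Gr}[\tfrac1p] \cong \prod_\kappa \operatorname{Gr}_{\operatorname{GL}_d}$ is a product of affine Grassmannians for $\operatorname{GL}_d$ (one for each formal disc $\operatorname{Spec} A[[u-\pi_\kappa]]$), while $\operatorname{FL}_\mu \cong \prod_\kappa \operatorname{FL}_{\mu_\kappa}$ is a product of partial flag varieties of $\operatorname{GL}_d$, for $\mu_\kappa$ the $\kappa$-component of $\mu$. Writing $t = u - \pi_\kappa$, the map of the lemma becomes in the $\kappa$-th factor $\operatorname{Fil}^\bullet \mapsto L := \sum_{i} t^{-i}\bigl(\operatorname{Fil}^i \otimes_A A[[t]]\bigr) \subset A((t))^d$, so it suffices to treat a single factor; this is the standard construction realising a partial flag variety inside the affine Grassmannian, but I would include the argument.

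Next I would check the morphism is well defined and has bounded image. Working Zariski-locally on $\operatorname{Spec} A$ I can split the filtration, $A^d = \bigoplus_i \operatorname{gr}^i$ with each $\operatorname{gr}^i$ finite projective over $A$ and $\operatorname{Fil}^i = \bigoplus_{i'\geq i}\operatorname{gr}^{i'}$; then $L = \bigoplus_i t^{-i}(\operatorname{gr}^i\otimes_A A[[t]])$, which is $A[[t]]$-projective of rank $d$, satisfies $L[\tfrac1t] = A((t))^d$, and obeys $t^N A[[t]]^d \subseteq L \subseteq t^{-N} A[[t]]^d$ for an $N$ depending only on $\mu$. Hence the map is well defined and factors through a fixed bounded closed subscheme $\operatorname{Gr}^{(N)}[\tfrac1p]$ of $\operatorname{Gr}[\tfrac1p]$, which is projective over $\mathcal{O}[\tfrac1p]$. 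The key remaining point is that the morphism is a monomorphism, and for this I would recover the filtration from the lattice by the intrinsic formula $\operatorname{Fil}^i = \operatorname{image}\bigl(t^i L \cap A[[t]]^d \to A[[t]]^d/t = A^d\bigr)$: using the local splitting one computes $t^i L \cap A[[t]]^d = \bigoplus_j t^{\max(i-j,0)}(\operatorname{gr}^j\otimes_A A[[t]])$, whose image modulo $t$ is $\bigoplus_{j\geq i}\operatorname{gr}^j = \operatorname{Fil}^i$, and since the displayed formula does not mention the splitting it glues. Therefore $\operatorname{FL}_\mu(A) \to \operatorname{Gr}[\tfrac1p](A)$ is injective for every $\mathcal{O}[\tfrac1p]$-algebra $A$, so $\operatorname{FL}_\mu \to \operatorname{Gr}^{(N)}[\tfrac1p]$ is a monomorphism of $\mathcal{O}[\tfrac1p]$-schemes.

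To conclude: $\operatorname{FL}_\mu$ is proper over $\mathcal{O}[\tfrac1p]$ and $\operatorname{Gr}^{(N)}[\tfrac1p]$ is separated over $\mathcal{O}[\tfrac1p]$, so the morphism between them is proper; a proper monomorphism of schemes is a closed immersion, and composing with the closed immersion $\operatorname{Gr}^{(N)}[\tfrac1p]\hookrightarrow \operatorname{Gr}[\tfrac1p]$ yields the lemma. The only real work is the bookkeeping over a non-field base $A$ in the two middle steps — verifying that $L$ is projective of rank $d$ and that $\operatorname{Fil}^\bullet$ is recovered intrinsically — and the device of splitting the filtration Zariski-locally dispatches it; once the map is known to be a well-defined monomorphism the rest is formal.
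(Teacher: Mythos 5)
Your argument is correct and follows the same route as the paper: the paper's proof simply observes that the map is a monomorphism, that it is proper because $\operatorname{FL}_\mu$ is projective and $\operatorname{Gr}$ is separated, and that proper monomorphisms are closed immersions. Your additional work (the decomposition over embeddings, the local splitting of the filtration, and the intrinsic reconstruction $\operatorname{Fil}^i = \operatorname{image}(t^iL\cap A[[t]]^d \to A^d)$) just fills in the details behind the paper's ``clearly the morphism is a monomorphism.''
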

\begin{proof}
	Clearly the morphism is a monomorphism and, since $\operatorname{FL}_\mu$ is $\mathcal{O}[\frac{1}{p}]$-projective and $\operatorname{Gr}$ is $\mathcal{O}$-separated, it is also proper. Proper monomorphisms are closed immersions so the lemma follows.
\end{proof}

\begin{definition}
Let $M_\mu \subset \operatorname{Gr}$ be the scheme theoretic image of the composite $\operatorname{FL}_\mu \rightarrow \operatorname{Gr}[\frac{1}{p}] \rightarrow \operatorname{Gr}$, i.e. the closure of $\operatorname{FL}_\mu$ in $\operatorname{Gr}$.
\end{definition}

In general, the geometry of the $M_\mu$'s can be quite complicated, see for example \cite[1.1]{B23}. However, when $K = K_0$ this is not the case and they can be described easily:

\begin{lemma}\label{lem-MmuK=K0}
	Suppose $K = K_0$. Then the Hodge type $\mu$ descends to a unique isomorphism class of graded $W(k) \otimes_{\mathbb{Z}_p} \mathcal{O}$-modules and there is an isomorphism
	$$
	\operatorname{FL}_{\mu,\mathcal{O}} \xrightarrow{\sim} M_\mu, \qquad\operatorname{Fil}^\bullet \mapsto \sum_{i \in \mathbb{Z}} \operatorname{Fil}^i \otimes_{W(k)\otimes_{\mathbb{Z}_p} A} E(u)^{-i} \mathfrak{S}_A
	$$
	where $\operatorname{FL}_{\mu,\mathcal{O}}$ is the smooth irreducible projective $\mathcal{O}$-scheme whose $A$-points classify separated exhaustive decreasing filtrations on $(W(k) \otimes_{\mathbb{Z}_p} A)^d$ with associated graded of type $\mu$.
\end{lemma}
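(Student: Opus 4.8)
The plan is to realise $\operatorname{FL}_{\mu,\mathcal{O}}$ as a product of partial flag varieties over $\mathcal{O}$, to identify its generic fibre with the $\operatorname{FL}_\mu$ of Lemma~\ref{lem-filt}, and then to argue that the displayed morphism is a closed immersion whose image is $\mathcal{O}$-flat with the right generic fibre, which forces it to equal $M_\mu$.

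For the descent of $\mu$: since $K = K_0$ the ring $W(k)\otimes_{\mathbb{Z}_p}\mathcal{O}$ is étale over $\mathcal{O}$, indeed (as $\mathcal{O}$ contains all conjugates of $K$) a product $\prod_{\tau}\mathcal{O}$. A Hodge type is the datum of a dominant cocharacter $\mu_\tau$ for each $\tau$, i.e.\ a weakly decreasing integer sequence of length $d$; to it one attaches the graded free $\mathcal{O}$-module with degree-$i$ piece of rank the multiplicity of $i$, and assembling over $\tau$ gives a graded free $W(k)\otimes_{\mathbb{Z}_p}\mathcal{O}$-module $N^\bullet$ of rank $d$ with $N^\bullet\otimes_{\mathcal{O}}\mathcal{O}[\tfrac{1}{p}]$ of type $\mu$. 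Uniqueness up to graded isomorphism is immediate, since over a local ring a projective graded module is the direct sum of its graded pieces, each free of the prescribed rank. Taking $\operatorname{FL}_{\mu,\mathcal{O}}$ to be the functor of separated exhaustive decreasing filtrations on $(W(k)\otimes_{\mathbb{Z}_p}A)^d$ with graded of type $\mu$, it is the product over $\tau$ of the partial flag varieties $\operatorname{GL}_{d,\mathcal{O}}/P_{\mu_\tau}$; these are smooth and projective over $\mathcal{O}$, and irreducible since each has a dense open cell isomorphic to affine space over the connected base $\operatorname{Spec}\mathcal{O}$, so the product is smooth, projective, and irreducible.

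Next I would check that $\operatorname{Fil}^\bullet\mapsto L:=\sum_i \operatorname{Fil}^i\otimes_{W(k)\otimes A}E(u)^{-i}\mathfrak{S}_A$ lands in $\operatorname{Gr}$. Here $E(u)$ is a non-zero-divisor of $\mathfrak{S}_A$ with $\mathfrak{S}_A/E(u)=W(k)\otimes_{\mathbb{Z}_p}A$, so working fppf-locally on $A$ we may split $\operatorname{Fil}^i=\bigoplus_{j\ge i}V_j$, and then $L=\bigoplus_j V_j\otimes E(u)^{-j}\mathfrak{S}_A$ is a projective rank-$d$ $\mathfrak{S}_A$-submodule of $\mathfrak{S}_A[\tfrac{1}{E(u)}]^d$, i.e.\ a point of $\operatorname{Gr}(A)$. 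The same computation recovers $\operatorname{Fil}^i$ as the image in $(W(k)\otimes A)^d$ of $E(u)^iL\cap\mathfrak{S}_A^d$, so the morphism $\operatorname{FL}_{\mu,\mathcal{O}}\to\operatorname{Gr}$ is a monomorphism; and over $\mathcal{O}[\tfrac{1}{p}]$, using $W(k)\otimes_{\mathbb{Z}_p}A=K\otimes_{\mathbb{Q}_p}A$ and the Beauville--Laszlo lemma to pass between $\mathfrak{S}$ and $\widehat{\mathfrak{S}}$, it coincides with the composite $\operatorname{FL}_\mu\to\operatorname{Gr}[\tfrac{1}{p}]\to\operatorname{Gr}$ of Lemma~\ref{lem-filt}; in particular $\operatorname{FL}_{\mu,\mathcal{O}}\otimes_{\mathcal{O}}\mathcal{O}[\tfrac{1}{p}]\cong\operatorname{FL}_\mu$.

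Finally, as $\operatorname{FL}_{\mu,\mathcal{O}}$ is $\mathcal{O}$-proper and $\operatorname{Gr}$ is $\mathcal{O}$-separated, this monomorphism is a proper monomorphism, hence a closed immersion onto a closed subscheme $M'\subset\operatorname{Gr}$. Being isomorphic to a product of flag varieties, $M'$ is $\mathcal{O}$-flat, so $\mathcal{O}_{M'}$ is $p$-torsion-free; since $M'\otimes_{\mathcal{O}}\mathcal{O}[\tfrac{1}{p}]=\operatorname{FL}_\mu$, the ideal sheaf cutting out the scheme-theoretic image of $\operatorname{FL}_\mu$ inside $M'$ becomes zero after inverting $p$, hence is zero, so $M'=M_\mu$. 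Composing with $\operatorname{FL}_{\mu,\mathcal{O}}\xrightarrow{\sim}M'$ then gives the lemma. The one point needing genuine care is the well-definedness and injectivity of the filtration-to-lattice map over an arbitrary $\mathcal{O}$-algebra, which I handle by reducing fppf-locally to the split case; everything else is formal once $W(k)\otimes_{\mathbb{Z}_p}\mathcal{O}$ is known to be étale, which is precisely where the hypothesis $K=K_0$ enters.
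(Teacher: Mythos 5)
Your proposal is correct and follows essentially the same route as the paper: descend $\mu$ integrally using the splitting of $W(k)\otimes_{\mathbb{Z}_p}\mathcal{O}$ when $K=K_0$, show the filtration-to-lattice map is a proper monomorphism hence a closed immersion (as in Lemma~\ref{lem-filt}), and then use $\mathcal{O}$-flatness of $\operatorname{FL}_{\mu,\mathcal{O}}$ together with the identification of generic fibres to conclude its image equals the closure $M_\mu$. The extra detail you supply (fppf-local splitting of the filtration, recovering $\operatorname{Fil}^i$ from the lattice, the $p$-torsion-freeness argument for the ideal sheaf) correctly fills in what the paper leaves implicit.
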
  
\begin{proof}
	That $\mu$ descends to an integral isomorphism class of graded modules follows from the fact the decomposition $K \otimes_{\mathbb{Z}_p} A \cong \prod_{\kappa:K \rightarrow \mathcal{O}[\frac{1}{p}]} A[\frac{1}{p}]$ descends to an integral decomposition  $\mathcal{O}_K \otimes_{\mathbb{Z}_p} A \cong \prod_{\kappa:K \rightarrow \mathcal{O}[\frac{1}{p}]} A$ when $K = K_0$. By the same argument as in Lemma~\ref{lem-filt} the given morphism $\operatorname{FL}_{\mu,\mathcal{O}} \rightarrow \operatorname{Gr}$ is a closed immersion. By flatness, and the fact it identfies $\operatorname{FL}_{\mu,\mathcal{O}}$ and $M_\mu$ after inverting $p$, we deduce the lemma.
\end{proof}
\section{The main results}

The following is then the our main theorem.
\begin{theorem}\label{thm-main}
	Assume $K(\pi^{1/p^\infty}) \cap K(\epsilon_\infty) =K$ and suppose $\mu$ is a Hodge type for which there exist integers $N_\tau \geq 0$ for $\tau:K_0 \hookrightarrow\mathcal{O}[\frac{1}{p}]$ such that the grading on $\mu_\kappa$, the $\kappa$-th part of $\mu$,
	\begin{itemize}
		\item  is concentrated in degree $0$ whenever $\kappa|_{K_0} = \tau \circ \varphi^n$ with $1 \leq n \leq N_\tau$
		\item	is concentrated in degree $[-a_\kappa,0]$ whenever $\kappa|_{K_0} = \tau$ for $a_\kappa$ satisfying
		$$
		\sum_{\kappa|_{K_0} = \tau} a_\kappa \leq \frac{p^{N_\tau+1}-1}{\nu}+1,\qquad \nu = \operatorname{max} v_\pi(\pi-\pi')
		$$
		where $v_\pi$ denotes the $\pi$-adic valuation and the maximum runs over distinct conjugates $\pi'$ of $\pi$ in $\mathcal{O}_K$. If $N_\tau =0$ for each $\tau$ and $\nu =1$ then assume also that this inequality is strict for at least one $\tau$.
	\end{itemize}
Then:
	\begin{enumerate}
		\item There exists a closed algebraic substack $Z^\mu_{d,\mathbb{F}} \subset Z_d \otimes_{\mathcal{O}} \mathbb{F}$ of dimension $\operatorname{dim}\operatorname{FL}_\mu$ defined by the property that any $\mathfrak{S}_A$-free $\mathfrak{M} \in Z_d(A)$ is contained in $Z^\mu_{d,\mathbb{F}}$ if and only if there exists an $\mathfrak{S}_A$-basis $\iota$ of $\mathfrak{M}$ with 
		$$
		\Psi(\mathfrak{M},\iota) \in M_\mu(A)
		$$
		\item The morphism $Y^\mu_d \otimes_{\mathcal{O}} \mathbb{F} \rightarrow Z_d$ is a closed immersion factoring through $Z^\mu_{d,\mathbb{F}}$.
		\item If $K = K_0$ then $Z^\mu_{d,\mathbb{F}}$ is irreducible and $\mathcal{O}$-smooth, and $Y^\mu_d \otimes_{\mathcal{O}} \mathbb{F} \hookrightarrow Z^\mu_{d,\mathbb{F}}$ is an isomorphism.
	\end{enumerate}
\end{theorem}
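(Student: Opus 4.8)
The plan is to study the moduli space $Z_d$ via the diagram relating it to the affine Grassmannian $\operatorname{Gr}$, following the strategy of \cite{B21,B23}. The first step is to define $Z^\mu_{d,\mathbb{F}}$: working over $\mathbb{F}$, pull back the subscheme $M_\mu \subset \operatorname{Gr}$ along $\Psi\colon \widetilde{Z}_d \to \operatorname{Gr}$ to get a closed substack of $\widetilde{Z}_d$, and then descend along $\Gamma$. Since $\Psi$ and $\Gamma$ are both $L^+\operatorname{GL}_d$-torsors but for the \emph{two different} actions $\cdot_\varphi$ and $\cdot_{\operatorname{trans}}$ in \eqref{eq-twoactions}, descending the $\Psi$-pullback of $M_\mu$ along $\Gamma$ requires knowing that this locus is stable under $\cdot_\varphi$ up to the equivalence defining $\widetilde Z_d$; concretely, under the identification $\widetilde Z_d \cong L\operatorname{GL}_d$ one is looking at the locus of $C$ with $C \in (\text{something})\cdot M_\mu$ and must check it is a union of $\cdot_\varphi$-orbits. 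The dimension count $\dim Z^\mu_{d,\mathbb{F}} = \dim \operatorname{FL}_\mu$ follows because $\Gamma$ has relative dimension equal to $\dim L^+\operatorname{GL}_d$ (suitably truncated) and $\Psi$ restricted to the preimage of $M_\mu$ has the same relative dimension, so the two torsor structures cancel.

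For part (2), recall that $Y^\mu_d$ parametrises crystalline Breuil--Kisin modules with a $G_K$-action, and the forgetful map $Y^\mu_d \to Z_d$ remembers only the Breuil--Kisin module. I would first show this map is a monomorphism mod $p$ — a crystalline $G_K$-structure on $\mathfrak{M}$, if it exists, is unique — using the rigidity of the Galois action built into \eqref{eq-Galoisaction}; properness of $Y^\mu_d$ over $\mathcal{O}$ then upgrades this to a closed immersion. The key analytic input is that any $\mathfrak{M}$ in the image admits a basis $\iota$ with $\Psi(\mathfrak{M},\iota) \in \operatorname{FL}_\mu \subset M_\mu$ \emph{over} $\mathcal{O}[\tfrac1p]$: this is essentially the comparison between the Breuil--Kisin module and the filtered $\varphi$-module $D$, under which the filtration on $D_K$ matches the relative position of $\varphi^*\mathfrak{M}$ and $\mathfrak{M}$ after base change, so the generic fibre of the image of $Y^\mu_d$ lands in $M_\mu$; taking closures and reducing mod $p$ gives the factorisation through $Z^\mu_{d,\mathbb{F}}$. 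This is where the hypothesis $K(\pi^{1/p^\infty}) \cap K(\epsilon_\infty) = K$ enters, guaranteeing the Breuil--Kisin functor is fully faithful on the relevant category.

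For part (3), assume $K = K_0$. Now Lemma~\ref{lem-MmuK=K0} identifies $M_\mu$ with the smooth irreducible flag scheme $\operatorname{FL}_{\mu,\mathcal{O}}$, already defined integrally. The plan is to show $Z^\mu_{d,\mathbb{F}}$ is, locally on $\Gamma$-charts, a fibration over $\operatorname{FL}_{\mu,\mathbb{F}}$ with affine-space fibres — indeed, fixing the $\Psi$-basis to lie over a point of $\operatorname{FL}_{\mu}$ pins down $\varphi(\iota)$ in terms of $\iota$, and the remaining freedom in $\iota$ itself is an affine group, so smoothness and irreducibility of $Z^\mu_{d,\mathbb{F}}$ follow from those of $\operatorname{FL}_{\mu,\mathbb{F}}$ together with the dimension count from part (1). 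Finally, for the isomorphism $Y^\mu_d \otimes_{\mathcal{O}}\mathbb{F} \xrightarrow{\sim} Z^\mu_{d,\mathbb{F}}$: part (2) gives a closed immersion, and both sides are reduced (the target by smoothness) of the same dimension $\dim\operatorname{FL}_\mu$; since $Y^\mu_d$ is $\mathcal{O}$-flat its mod-$p$ fibre has that dimension, so it suffices to show the closed immersion is surjective, i.e. every $\mathfrak{M} \in Z^\mu_{d,\mathbb{F}}$ lifts a crystalline Galois structure — this is proved by a lifting/deformation argument to $\mathcal{O}$-flat points, where the characterisation via $M_\mu$ and the comparison of part (2) let one produce the $G_K$-action.

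I expect the main obstacle to be part (3)'s "extension by Frobenius" argument — showing that the relative position condition $\Psi(\mathfrak{M},\iota)\in M_\mu$ can actually be realised and that the resulting description is a genuine affine-space fibration rather than something with excess intersection. This is precisely the point where the irregularity hypotheses ($\mu_\kappa$ concentrated in degree $0$ for $\kappa|_{K_0} = \tau\circ\varphi^n$, $n \leq N_\tau$, forcing the bound $\sum a_\kappa \leq \frac{p^{N_\tau+1}-1}{\nu}+1$) must be used: the vanishing of the Hodge--Tate weights at the embeddings $\tau\circ\varphi,\dots,\tau\circ\varphi^{N_\tau}$ lets one reinterpret the $\varphi$-semilinear Frobenius as semilinear for a higher power of $p$, and the extra powers of $p$ are exactly what is needed to push the estimates controlling the fibres of $Z^\mu_{d,\mathbb{F}}\to M_\mu$ past the naive $[-p,0]$ bound. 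Making this quantitative — tracking how $E(u)$-divisibilities interact with the $\varphi^n$-twisting across the factors of $\mathfrak{S}_A = \prod_\tau \mathfrak{S}_{A,\tau}$ — is the technical heart of the matter, and the Steinberg exclusion (strict inequality when $N_\tau = 0$ and $\nu = 1$) is needed to avoid the boundary case where the fibration degenerates.
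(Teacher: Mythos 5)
Your proposal gets the architecture right (the torsor diagram $Z_d \leftarrow \widetilde{Z}_d \rightarrow \operatorname{Gr}$, the comparison with the filtered $\varphi$-module, the identification of $M_\mu$ with a flag variety when $K=K_0$), but it has two genuine gaps at exactly the points where the irregularity hypotheses must do work. First, in part (1) you propose to pull back $M_\mu\otimes_{\mathcal{O}}\mathbb{F}$ along $\Psi$ on all of $\widetilde{Z}_d$ and descend along $\Gamma$, noting only that one ``must check'' stability under $\cdot_\varphi$. That check fails as stated: since $\Psi(g\cdot_\varphi(\mathfrak{M},\iota))=\varphi(g)\cdot\Psi(\mathfrak{M},\iota)$ and $\varphi(g)$ is only congruent to a constant modulo $u^p$, stability of $M_\mu$ would require $\sum_{\kappa} a_\kappa\leq p$, whereas the theorem allows $\sum a_\kappa$ up to order $p^{N_\tau+1}$. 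The essential move in the paper is to replace $\widetilde{Z}_d$ by the closed subfunctor $\widetilde{Z}_d^{(N_\tau)}$ of $(N_\tau)$-adapted bases (those with $\varphi(\iota_{\tau\circ\varphi^{n+1}})=\iota_{\tau\circ\varphi^n}$), for which $\Gamma$ and $\Psi$ are torsors under the smaller groups $L^+\operatorname{GL}_d^{(N_\tau),\varphi}$ and $L^+\operatorname{GL}_d^{(N_\tau),\operatorname{trans}}$; elements of $\varphi(L^+\operatorname{GL}_d^{(N_\tau),\varphi})$ are constant modulo $u^{p^{N_\tau+1-n}}$ at the embedding $\tau\circ\varphi^n$, and Lemma~\ref{lem-truncate} then gives the stability (Proposition~\ref{prop-stability}). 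You gesture at the ``higher power of $p$'' reinterpretation of Frobenius but locate it in part (3); it is in fact the engine of the construction in part (1) and of the estimates in part (2).

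Second, in part (2) the step ``the generic fibre of the image of $Y^\mu_d$ lands in $M_\mu$; taking closures and reducing mod $p$ gives the factorisation'' does not go through. Kisin's comparison (Proposition~\ref{prop-kis}) only says that $X\cdot\Psi(\mathfrak{M},\iota)\in M_\mu(A[\tfrac{1}{p}])$ for a change-of-basis matrix $X$ with entries in $\mathcal{O}^{\operatorname{rig}}_A[\tfrac{1}{\varphi(\lambda)}]$, which is never $p$-adically integral, so the point $\Psi(\mathfrak{M},\iota)$ itself need not lie in $M_\mu$ generically and there is nothing to take the closure of. The technical heart of the paper (Sections~\ref{sec-shape}--\ref{sec-int}) replaces $X$ by an integral matrix $1+X_{\operatorname{trun}}\in\operatorname{GL}_d(\mathfrak{S}_A)$ acting the same way on $M_\mu$: this uses the differential operator $N_\nabla$, whose integrality (Proposition~\ref{prop-Nablaintegral}, where $K(\pi^{1/p^\infty})\cap K(\epsilon_\infty)=K$ actually enters, rather than via full faithfulness) yields Taylor-coefficient bounds $X_0^{-1}X_i\in\frac{p^{N_\tau}}{i}\kappa(\pi)^{p^{N_\tau+1}-i}\operatorname{Mat}(A)$, and it is precisely the truncation Lemma~\ref{lem-integralmatrices} that forces the bound $\sum a_\kappa\leq\frac{p^{N_\tau+1}-1}{\nu}+1$. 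Relatedly, the ``rigidity'' of the crystalline $G_K$-action that you invoke for the monomorphism is not automatic: in the paper it is a fixed-point/contraction argument for the operator $\Omega_\sigma(M)=C\varphi(M)\sigma(C^{-1})$ (Proposition~\ref{prop-Galois}), whose topological nilpotence again depends on the weight bounds and on the strict inequality excluding the Steinberg case. Your part (3) is essentially correct once (1) and (2) are in place: irreducibility and reducedness of $Z^\mu_{d,\mathbb{F}}$ follow from those of $\operatorname{FL}_{\mu,\mathbb{F}}$ through the two torsors, and a closed immersion into an irreducible reduced stack of the same dimension is an isomorphism, with no lifting argument needed.
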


The assumption $K(\pi^{1/p^\infty}) \cap K(\epsilon_{\infty})=K$ is essentially harmless. If $p>2$ it is automatic, and when $p=2$, while not automatic, $\pi$ can always be chosen so that it holds, see \cite[2.1]{Wang17}. Notice also that if the ramification degree $e$ of $K$ of $\mathbb{Q}_p$ is prime to $p$ (i.e. if $K$ is tamely ramified) then $\nu =1$. In this case the bound in the theorem becomes $\sum_{\kappa|_{K_0} = \tau} a_\kappa \leq p^{N_\tau +1}$.

\begin{proof}
	The proof will take the rest of the paper. The construction and properties of $Z^\mu_{d,\mathbb{F}}$ are established in Proposition~\ref{prop-stability}. That $Y^\mu_d \otimes_{\mathcal{O}} \mathbb{F} \rightarrow Z_d$ factors through $Z^\mu_{d,\mathbb{F}}$ is then a consequence of Corollary~\ref{cor-factor}. The fact that this is a closed immersion is proved in Corollary~\ref{cor-closedimm} and the fact it is an isomorphism when $K = K_0$ follows from Proposition~\ref{thm-isom}.
\end{proof}

Since the crystalline locus $\mathcal{X}^\mu_d$ from the introduction is defined in \cite[4.8.8]{EG19} as the scheme theoretic image of $Y^\mu_d$ under a morphism $Y_d \rightarrow \mathcal{X}_d$, the main result of the paper follows easily:

\begin{corollary}\label{cor-irred}
		If $K = K_0$ and $\mu$ is as in Theorem~\ref{thm-main} then $\mathcal{X}^\mu_d \otimes_{\mathcal{O}} \mathbb{F}$ is irreducible.
\end{corollary}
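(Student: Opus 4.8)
The plan is to deduce the corollary almost formally from Theorem~\ref{thm-main}, using the fact that $\mathcal{X}^\mu_d \otimes_{\mathcal{O}} \mathbb{F}$ is by construction the scheme-theoretic image of $Y^\mu_d \otimes_{\mathcal{O}} \mathbb{F}$ under the map $Y_d \to \mathcal{X}_d$. First I would invoke part (3) of Theorem~\ref{thm-main}, which for $K = K_0$ identifies $Y^\mu_d \otimes_{\mathcal{O}} \mathbb{F}$ with $Z^\mu_{d,\mathbb{F}}$, and asserts that the latter is irreducible. Since the scheme-theoretic image of an irreducible stack under a morphism is irreducible (the image of an irreducible topological space is irreducible, and scheme-theoretic image has the same underlying space here, being a closed substack whose formation commutes with the relevant flat base change), it follows that $\mathcal{X}^\mu_d \otimes_{\mathcal{O}} \mathbb{F}$ is irreducible. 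One should be slightly careful that forming the scheme-theoretic image commutes with reduction mod $\mathfrak{m}_{\mathcal{O}}$; this is where one uses that $\mathcal{X}^\mu_d$ is $\mathcal{O}$-flat (cf. \cite[4.8.8]{EG19}, \cite[16.3]{B23}), so that $\mathcal{X}^\mu_d \otimes_{\mathcal{O}} \mathbb{F}$ is indeed the scheme-theoretic image of $Y^\mu_d \otimes_{\mathcal{O}} \mathbb{F}$, rather than something smaller.

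In more detail, the argument runs as follows. By definition $\mathcal{X}^\mu_d$ is the scheme-theoretic image of $Y^\mu_d \to \mathcal{X}_d$, and since everything in sight is a Noetherian $p$-adic formal algebraic stack of finite type over $\mathcal{O}$ with $\mathcal{X}^\mu_d$ being $\mathcal{O}$-flat, the special fibre $\mathcal{X}^\mu_d \otimes_{\mathcal{O}} \mathbb{F}$ is the scheme-theoretic image of the composite $Y^\mu_d \otimes_{\mathcal{O}} \mathbb{F} \to \mathcal{X}_d \otimes_{\mathcal{O}} \mathbb{F}$. By Theorem~\ref{thm-main}(3) we have $Y^\mu_d \otimes_{\mathcal{O}} \mathbb{F} \cong Z^\mu_{d,\mathbb{F}}$, which is irreducible. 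A scheme-theoretic image is a closed substack supported on the closure of the set-theoretic image; the set-theoretic image of an irreducible stack is irreducible, hence so is its closure, hence so is $\mathcal{X}^\mu_d \otimes_{\mathcal{O}} \mathbb{F}$.

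I do not anticipate any genuine obstacle here: the corollary is a bookkeeping consequence of the main theorem, and all the real work has already been delegated to Proposition~\ref{prop-stability}, Corollary~\ref{cor-factor}, Corollary~\ref{cor-closedimm}, and Proposition~\ref{thm-isom}. The only point requiring a line of care is the compatibility of scheme-theoretic image formation with the base change $\mathcal{O} \to \mathbb{F}$, which holds precisely because $\mathcal{X}^\mu_d$ (and $Y^\mu_d$) are $\mathcal{O}$-flat, as recorded in \cite{EG19}; without flatness the special fibre of the image could a priori fail to be reduced-equal to the image of the special fibre, though irreducibility of the underlying space would still follow since the two closed substacks have the same support. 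Either way the conclusion is immediate.
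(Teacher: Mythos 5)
Your overall strategy is the same as the paper's: reduce to the irreducibility of $Y^\mu_d \otimes_{\mathcal{O}} \mathbb{F} \cong Z^\mu_{d,\mathbb{F}}$ from Theorem~\ref{thm-main}(3) and pass to the scheme-theoretic image. However, the one step you flag as "requiring a line of care" is exactly where your justification fails. You claim that $\mathcal{O}$-flatness of $\mathcal{X}^\mu_d$ ensures that $\mathcal{X}^\mu_d \otimes_{\mathcal{O}} \mathbb{F}$ \emph{is} the scheme-theoretic image of $Y^\mu_d \otimes_{\mathcal{O}} \mathbb{F}$. This is not a valid principle: scheme-theoretic images commute with \emph{flat} base change, and $\mathcal{O} \to \mathbb{F}$ is not flat; flatness of the image over $\mathcal{O}$ does not rescue this. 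For instance, the scheme-theoretic image of $\operatorname{Spec}\mathcal{O}[\frac{1}{p}] \to \operatorname{Spec}\mathcal{O}$ is all of $\operatorname{Spec}\mathcal{O}$, which is $\mathcal{O}$-flat, yet its special fibre is $\operatorname{Spec}\mathbb{F}$ while the scheme-theoretic image of the base-changed (empty) source is empty. In general one only gets that the scheme-theoretic image of $Y^\mu_d \otimes_{\mathcal{O}} \mathbb{F}$ is a closed substack of $\mathcal{X}^\mu_d \otimes_{\mathcal{O}} \mathbb{F}$, possibly strictly smaller, and this is the inclusion going the wrong way for your argument.

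Your fallback sentence --- "irreducibility of the underlying space would still follow since the two closed substacks have the same support" --- asserts without proof precisely the point that needs an argument. The correct justification, which is what the paper supplies, is a statement about finite type points rather than about flatness: by the construction of $\mathcal{X}^\mu_d$ as a scheme-theoretic image in the sense of \cite{EG19}, every finite type point of $\mathcal{X}^\mu_d \otimes_{\mathcal{O}} \mathbb{F}$ arises from a finite type point of $Y^\mu_d$, and any such point, having residue field of characteristic $p$, automatically factors through $Y^\mu_d \otimes_{\mathcal{O}} \mathbb{F}$. Hence the two closed substacks have the same underlying topological space, and irreducibility of $Y^\mu_d \otimes_{\mathcal{O}} \mathbb{F}$ passes first to its (topological) image and then to $\mathcal{X}^\mu_d \otimes_{\mathcal{O}} \mathbb{F}$. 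With that substitution your argument is complete; note also that you do not need the two closed substacks to agree as substacks, only that they have the same support, so the stronger claim you were trying to establish via flatness is both false as stated and unnecessary.
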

\begin{proof}
	The formation of scheme theoretic images is not compatible with non-flat base change so the scheme theoretic image of $Y^\mu_d \otimes_{\mathcal{O}} \mathbb{F}$ is only a closed substack of $\mathcal{X}^\mu_d \otimes_{\mathcal{O}} \mathbb{F}$. However, these closed substacks have the same topological space (in the sense of \cite[04XE]{stacks-project}) because every finite type point of $\mathcal{X}^\mu_d \otimes_{\mathcal{O}} \mathbb{F}$ (in the sense of \cite[06FW]{stacks-project}) arises from a finite type of $Y^\mu_d$ which necessarily factors through $Y^\mu_d \otimes_{\mathcal{O}} \mathbb{F}$. Since $Y^\mu_d \otimes_{\mathcal{O}} \mathbb{F}$ is irreducible the same is true of its scheme theoretic image, and hence also of $\mathcal{X}^\mu_d \otimes_{\mathcal{O}} \mathbb{F}$.
\end{proof}

\section{Filtrations and the shape of Frobenius}\label{sec-shape}

	Let $A$ be a finite flat $\mathcal{O}$-algebra $A$, $\mathfrak{M} \in Y^\mu_d(A)$, and $D$ the associated filtered $\varphi$-module, as in Section~\ref{sec-crysBK}.

\begin{proposition}\label{prop-kis}
	\begin{itemize}
		\item There exists a $\varphi$-equivariant isomorphism
		$$
		\xi: \varphi^* \mathfrak{M} \otimes_{\mathfrak{S}_A} \mathcal{O}^{\operatorname{rig}}_A[\tfrac{1}{\varphi(\lambda)}] \cong D \otimes_{K_0 \otimes_{\mathbb{Z}_p} A} \mathcal{O}^{\operatorname{rig}}_A[\tfrac{1}{\varphi(\lambda)}]
		$$
		where $\mathcal{O}^{\operatorname{rig}} \subset K_0[[u]]$ is the subring of power series converging on the open unit disk, $\mathcal{O}^{\operatorname{rig}}_A = \mathcal{O}^{\operatorname{rig}} \otimes_{\mathbb{Z}_p} A$, and $\lambda = \prod_{n \geq 0} \varphi^n(\frac{E(u)}{E(0)})$.
		\item After basechanging to $\widehat{\mathfrak{S}}_A[\frac{1}{E(u)}]$, $\xi$ induces an identification
		$$
		\mathfrak{M} \otimes_{\mathfrak{S}} \widehat{\mathfrak{S}}_{A[\frac{1}{p}]} = \sum_{i\in \mathbb{Z}} \operatorname{Fil}^i(D \otimes_{K_0} K) \otimes_{K \otimes_{\mathbb{Z}_p} A} E(u)^{-i}\widehat{\mathfrak{S}}_{A[\frac{1}{p}]}
		$$
		(here we use the  unique action of $K$ on $\widehat{\mathfrak{S}}_{A[\frac{1}{p}]}$ lifting that on its residue field).
	\end{itemize}
\end{proposition}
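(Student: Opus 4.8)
The plan is to deduce the proposition from Kisin's comparison between crystalline Breuil--Kisin modules and filtered $\varphi$-modules \cite{KisFF}, in the relative form worked out in \cite{EG19} and \cite{B23}; I address the two bullet points in turn.

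For the first, put $\mathcal{M} := \mathfrak{M} \otimes_{\mathfrak{S}} \mathcal{O}^{\operatorname{rig}}_A$. Since $E(u)$ divides $\lambda$ up to the unit $E(0)$, the Frobenius of $\mathfrak{M}$ makes $\varphi^*\mathcal{M}[\tfrac{1}{\varphi(\lambda)}]$ into a genuine $\varphi$-module over $\mathcal{O}^{\operatorname{rig}}_A[\tfrac{1}{\varphi(\lambda)}]$; this is the left-hand side of $\xi$. One then uses the $\varphi$- and $G_K$-equivariant embedding of $\mathcal{O}^{\operatorname{rig}}_A$ into a crystalline period ring ($u \mapsto [\pi^\flat]$) of \cite{KisFF}, under which $\varphi(\lambda)$ becomes invertible, so that $\varphi^*\mathcal{M}[\tfrac{1}{\varphi(\lambda)}]$ may be base changed along $\mathcal{O}^{\operatorname{rig}}_A[\tfrac{1}{\varphi(\lambda)}] \to B_{\operatorname{crys}} \otimes_{\mathbb{Z}_p} A$. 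From the crystalline $G_K$-action on $\mathfrak{M} \otimes_{\mathfrak{S}} A_{\operatorname{inf},A}$, together with the construction of the representation $T$ in \cite[16.2]{B23}, one obtains a $\varphi$- and $G_K$-equivariant isomorphism $\varphi^*\mathcal{M} \otimes_{\mathcal{O}^{\operatorname{rig}}_A} (B_{\operatorname{crys}} \otimes_{\mathbb{Z}_p} A) \cong T \otimes_{\mathbb{Z}_p} (B_{\operatorname{crys}} \otimes_{\mathbb{Z}_p} A)$; the estimate \eqref{eq-Galoisaction} is precisely what forces the comparison already visible over $A_{\operatorname{inf},A}$ to extend to this statement over $B_{\operatorname{crys}}$. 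Composing with Fontaine's crystalline comparison $T \otimes_{\mathbb{Z}_p} (B_{\operatorname{crys}} \otimes_{\mathbb{Z}_p} A) \cong D \otimes_{K_0 \otimes_{\mathbb{Z}_p} A} (B_{\operatorname{crys}} \otimes_{\mathbb{Z}_p} A)$ yields a $\varphi$-equivariant isomorphism after base change along $\mathcal{O}^{\operatorname{rig}}_A[\tfrac{1}{\varphi(\lambda)}] \to B_{\operatorname{crys}} \otimes_{\mathbb{Z}_p} A$; one then descends this back to $\mathcal{O}^{\operatorname{rig}}_A[\tfrac{1}{\varphi(\lambda)}]$, using that a $\varphi$-equivariant morphism of $\varphi$-modules over $\mathcal{O}^{\operatorname{rig}}_A[\tfrac{1}{\varphi(\lambda)}]$ is recovered from its base change (a relative form of \cite[1.2]{KisFF}). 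This produces $\xi$.

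For the second bullet point I would first observe that $\varphi(\lambda)$, which a priori lies in $\mathcal{O}^{\operatorname{rig}}_A$, already maps to a unit in $\widehat{\mathfrak{S}}_{A[\frac1p]}$: the zeros of $E(u)$ in the open unit disk are $\pi$ and its conjugates over $W(k)$, all of $\pi$-adic valuation $1$, whereas $\varphi^n(E(u))$ vanishes only where $u^{p^n}$ is such a conjugate, which forces $\pi$-adic valuation $p^{-n} < 1$; hence no factor of $\varphi(\lambda) = \prod_{n \geq 1} \varphi^n(E(u)/E(0))$ vanishes on the support of $E(u)$, so $\varphi(\lambda)$ is a unit in the $E(u)$-adically complete ring $\widehat{\mathfrak{S}}_{A[\frac1p]}$. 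Base changing $\xi$ along $\mathcal{O}^{\operatorname{rig}}_A[\tfrac{1}{\varphi(\lambda)}] \to \widehat{\mathfrak{S}}_{A[\frac1p]}$ gives an isomorphism $\varphi^*\mathfrak{M} \otimes_{\mathfrak{S}} \widehat{\mathfrak{S}}_{A[\frac1p]} \cong D \otimes_{K_0 \otimes_{\mathbb{Z}_p} A} \widehat{\mathfrak{S}}_{A[\frac1p]}$, and composing with the Frobenius isomorphism $\varphi^*\mathfrak{M}[\tfrac{1}{E(u)}] \cong \mathfrak{M}[\tfrac{1}{E(u)}]$ realises $\mathfrak{M} \otimes_{\mathfrak{S}} \widehat{\mathfrak{S}}_{A[\frac1p]}$ as a $\widehat{\mathfrak{S}}_{A[\frac1p]}$-lattice $L$ in $D \otimes_{K_0 \otimes_{\mathbb{Z}_p} A} \widehat{\mathfrak{S}}_{A[\frac1p]}[\tfrac{1}{E(u)}]$. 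It remains to identify $L$ with $\sum_i \operatorname{Fil}^i(D_K) \otimes E(u)^{-i} \widehat{\mathfrak{S}}_{A[\frac1p]}$, i.e. with the image of the Hodge filtration under the map of Lemma~\ref{lem-filt}. For this I would base change once more to $B_{\operatorname{dR}}^+ \otimes_{\mathbb{Z}_p} A$ along $u \mapsto [\pi^\flat]$ --- and along its conjugates, so as to reach each local factor of $\widehat{\mathfrak{S}}_{A[\frac1p]}$ --- where $E(u)$ becomes a unit multiple of $t$: there the identifications above carry $\mathfrak{M} \otimes_{\mathfrak{S}} (B_{\operatorname{dR}}^+ \otimes_{\mathbb{Z}_p} A)$ onto $T \otimes_{\mathbb{Z}_p} (B_{\operatorname{dR}}^+ \otimes_{\mathbb{Z}_p} A)$, which equals $\sum_i \operatorname{Fil}^i(D_K) \otimes t^{-i} B_{\operatorname{dR}}^+$ (suitably base changed) by the very definition $\operatorname{Fil}^i(D_K) = (T \otimes t^i B_{\operatorname{dR}}^+)^{G_K}$. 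Since $\widehat{\mathfrak{S}}_{A[\frac1p]} \to B_{\operatorname{dR}}^+ \otimes_{\mathbb{Z}_p} A$ is, on each local factor, a faithfully flat local homomorphism, the equality of these two lattices descends to $\widehat{\mathfrak{S}}_{A[\frac1p]}$, which is the claim.

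The main obstacle is the $p$-adic Hodge theory in the first bullet point: producing the comparison $\varphi^*\mathcal{M} \otimes (B_{\operatorname{crys}} \otimes_{\mathbb{Z}_p} A) \cong T \otimes_{\mathbb{Z}_p} (B_{\operatorname{crys}} \otimes_{\mathbb{Z}_p} A)$ out of the crystalline $G_K$-action \eqref{eq-Galoisaction}, and then descending it $\varphi$-equivariantly to $\mathcal{O}^{\operatorname{rig}}_A[\tfrac{1}{\varphi(\lambda)}]$, all uniformly in the coefficient ring $A$; this is where the real content of \cite{KisFF}, in the relative form of \cite{EG19} and \cite{B23}, is used. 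The remaining points --- the invertibility of $\varphi(\lambda)$ and the lattice bookkeeping of the second bullet point --- are routine in comparison.
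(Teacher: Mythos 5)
The paper's own proof of this proposition is a bare citation of \cite[1.2.1]{Kis06} and \cite[1.2.6]{Kis06}, so the real question is whether your sketch reconstructs an argument that works. For the second bullet point it essentially does: the invertibility of $\varphi(\lambda)$ in $\widehat{\mathfrak{S}}_{A[\frac{1}{p}]}$ holds for exactly the valuation reason you give, and identifying the lattice by passing to $B_{\operatorname{dR}}^+$ at $u=[\pi^\flat]$ is how \cite[1.2.6]{Kis06} proceeds (though reaching the factors of $\widehat{\mathfrak{S}}_{A[\frac{1}{p}]}$ other than the one at $u=\pi$ needs more care than ``and along its conjugates'').

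The first bullet point, however, has a genuine gap, and moreover inverts the logic of the cited result. You build the comparison over $B_{\operatorname{crys}}\otimes_{\mathbb{Z}_p}A$ and then ``descend'' it to $\mathcal{O}^{\operatorname{rig}}_A[\tfrac{1}{\varphi(\lambda)}]$ by asserting that a $\varphi$-equivariant morphism over $\mathcal{O}^{\operatorname{rig}}_A[\tfrac{1}{\varphi(\lambda)}]$ is recovered from its base change. No such full-faithfulness or descent statement is what \cite[\S 1.2]{Kis06} (or its relative versions in \cite{EG19}, \cite{B23}) proves, and none is available off the shelf: the map $\mathcal{O}^{\operatorname{rig}}_A[\tfrac{1}{\varphi(\lambda)}]\to B_{\operatorname{crys}}\otimes_{\mathbb{Z}_p}A$ is not faithfully flat and is not amenable to Galois descent ($G_K$- or $G_{K_\infty}$-invariants of $B_{\operatorname{crys}}$ do not return $\mathcal{O}^{\operatorname{rig}}$), so knowing that the two sides become isomorphic after base change does not produce $\xi$. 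Kisin's actual construction runs the other way: $D$ is identified with the fibre $\varphi^*\mathfrak{M}\otimes\mathcal{O}^{\operatorname{rig}}/u(\cdots)$ at $u=0$, and $\xi$ is produced directly over $\mathcal{O}^{\operatorname{rig}}[\tfrac{1}{\varphi(\lambda)}]$ as the unique $\varphi$-equivariant lift of the identity on that fibre, by a successive-approximation argument using that $\varphi$ contracts the open unit disk; compatibility with the $B_{\operatorname{crys}}$-comparison (hence the identification of the fibre with $D_{\operatorname{crys}}(T)$) is checked afterwards. To make your route work you would have to either carry out this iteration uniformly in $A$, or actually prove the descent statement you invoke; as written, the step you yourself flag as ``the main obstacle'' is exactly the one that is missing.
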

\begin{proof}
	See \cite[1.2.1]{Kis06} and \cite[1.2.6]{Kis06}.
\end{proof}

The following is then an immediate consequence of Proposition~\ref{prop-kis}.

\begin{corollary}\label{cor-XMmu}
	Assume that $\mathfrak{M}$ admits an $\mathfrak{S}_A$-basis $\iota$ and let $\beta$ be a $K_0 \otimes_{\mathbb{Z}_p} A$-basis of $D$. If $X \in \operatorname{GL}_d(\mathcal{O}^{\operatorname{rig}}_A[\frac{1}{\varphi(\lambda)}])$ is such that $\xi(\varphi(\iota)) = \beta X$ then
	$$
	X \cdot \Psi(\mathfrak{M},\iota)[\tfrac{1}{p}] \in M_\mu(A[\tfrac{1}{p}])
	$$
	Here the action of $X$ on $\operatorname{Gr}(A[\frac{1}{p}])$ is via the interpretation of $A[\frac{1}{p}]$-valued points as $\widehat{\mathfrak{S}}_{A[\frac{1}{p}]}$-submodules of $\widehat{\mathfrak{S}}_{A[\frac{1}{p}]}[\frac{1}{E(u)}]^d$.
\end{corollary}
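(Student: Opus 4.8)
The plan is to unwind every identification in play and then read the statement off from Proposition~\ref{prop-kis}(2) together with Lemma~\ref{lem-filt}; once the frames are matched correctly the corollary is purely formal. The first move is to replace $M_\mu$ by $\operatorname{FL}_\mu$. Because the formation of scheme-theoretic images commutes with the flat base change $\operatorname{Spec}\mathcal{O}[\tfrac1p]\to\operatorname{Spec}\mathcal{O}$ and $\operatorname{FL}_\mu\hookrightarrow\operatorname{Gr}[\tfrac1p]$ is already a closed immersion (Lemma~\ref{lem-filt}), the closed subscheme $M_\mu\subset\operatorname{Gr}$ satisfies $M_\mu\otimes_{\mathcal{O}}\mathcal{O}[\tfrac1p]=\operatorname{FL}_\mu$ inside $\operatorname{Gr}[\tfrac1p]$. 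Since $A[\tfrac1p]$ is an $\mathcal{O}[\tfrac1p]$-algebra, an $A[\tfrac1p]$-point of $\operatorname{Gr}$ lies in $M_\mu$ precisely when it lies in $\operatorname{FL}_\mu$ under the embedding of Lemma~\ref{lem-filt}; so it suffices to exhibit a separated, exhaustive, descending filtration of type $\mu$ on $(K\otimes_{\mathbb{Q}_p}A[\tfrac1p])^d$ whose image under that embedding equals $X\cdot\Psi(\mathfrak{M},\iota)[\tfrac1p]$.

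Next I would check that $X$ genuinely operates on $\operatorname{Gr}(A[\tfrac1p])$ and identify the lattice $X\cdot\Psi(\mathfrak{M},\iota)[\tfrac1p]$. The ring $\mathcal{O}^{\operatorname{rig}}_A[\tfrac1{\varphi(\lambda)}]$ maps to $\widehat{\mathfrak{S}}_{A[\tfrac1p]}$: power series on the open disk converge at the locus $E(u)=0$, and for $n\ge1$ the element $\varphi^n(E(u))$ becomes a unit in the Artinian ring $\widehat{\mathfrak{S}}_{A[\tfrac1p]}/E(u)=K\otimes_{\mathbb{Q}_p}A[\tfrac1p]$ (its reduction is nonzero because $\pi^{p^n}$ is not a conjugate of $\varphi^n(\pi)$, by a valuation count), so $\varphi(\lambda)=\prod_{n\ge1}\varphi^n(E(u)/E(0))$ is a unit there and $X\in\operatorname{GL}_d(\widehat{\mathfrak{S}}_{A[\tfrac1p]})$ acts on $\widehat{\mathfrak{S}}_{A[\tfrac1p]}$-lattices. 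Writing $\Phi\colon\varphi^*\mathfrak{M}[\tfrac1{E(u)}]\xrightarrow{\sim}\mathfrak{M}[\tfrac1{E(u)}]$ for the Frobenius structure map, the basis ``$\varphi(\iota)$'' used by $\Psi$ is $\Phi$ applied to the evident basis $1\otimes\iota$ of $\varphi^*\mathfrak{M}$, and $\Psi(\mathfrak{M},\iota)[\tfrac1p]$ is the lattice $\mathfrak{M}\otimes\widehat{\mathfrak{S}}_{A[\tfrac1p]}$ read off in that frame. Base-changing $\xi$ along $\mathcal{O}^{\operatorname{rig}}_A[\tfrac1{\varphi(\lambda)}]\to\widehat{\mathfrak{S}}_{A[\tfrac1p]}$ and precomposing with $\Phi^{-1}$ yields the $\widehat{\mathfrak{S}}_{A[\tfrac1p]}[\tfrac1{E(u)}]$-linear isomorphism $\xi'\colon\mathfrak{M}[\tfrac1{E(u)}]\otimes\widehat{\mathfrak{S}}_{A[\tfrac1p]}\xrightarrow{\sim}D\otimes_{K_0\otimes_{\mathbb{Z}_p}A}\widehat{\mathfrak{S}}_{A[\tfrac1p]}[\tfrac1{E(u)}]$ appearing in Proposition~\ref{prop-kis}(2). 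Because $\xi(1\otimes\iota)=\beta X$ while $\varphi(\iota)=\Phi(1\otimes\iota)$, the $\Phi$ built into the definition of $\varphi(\iota)$ cancels the $\Phi^{-1}$ built into $\xi'$, so passing from the $\varphi(\iota)$-frame to the $\beta$-frame through $\xi'$ is exactly left multiplication by $X$; hence $X\cdot\Psi(\mathfrak{M},\iota)[\tfrac1p]=\xi'\bigl(\mathfrak{M}\otimes\widehat{\mathfrak{S}}_{A[\tfrac1p]}\bigr)$ expressed in the $\beta$-frame.

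Finally I would invoke Proposition~\ref{prop-kis}(2), which gives $\xi'(\mathfrak{M}\otimes\widehat{\mathfrak{S}}_{A[\tfrac1p]})=\sum_{i}\operatorname{Fil}^i(D_K)\otimes_{K\otimes_{\mathbb{Z}_p}A}E(u)^{-i}\widehat{\mathfrak{S}}_{A[\tfrac1p]}$, where $D_K=D\otimes_{K_0}K$. Transporting the Hodge filtration $\operatorname{Fil}^\bullet(D_K)$ along $\beta$ to a filtration $\operatorname{Fil}^\bullet$ on $(K\otimes_{\mathbb{Q}_p}A[\tfrac1p])^d$, the right-hand side is precisely the image of $[\operatorname{Fil}^\bullet]$ under the embedding of Lemma~\ref{lem-filt}. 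Since $\mathfrak{M}\in Y^\mu_d(A)$, the associated crystalline representation has Hodge type $\mu$, i.e.\ $\operatorname{Fil}^\bullet$ is separated, exhaustive, and descending with associated graded of type $\mu$, so $[\operatorname{Fil}^\bullet]\in\operatorname{FL}_\mu(A[\tfrac1p])$; combined with the first step this gives $X\cdot\Psi(\mathfrak{M},\iota)[\tfrac1p]\in M_\mu(A[\tfrac1p])$.

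The only step that requires real care is the frame matching in the second paragraph: confirming that it is $X$ itself, rather than a Frobenius twist of it, that implements the change of basis between the $\varphi(\iota)$-frame and the $\beta$-frame. This is exactly why Proposition~\ref{prop-kis} is normalised with $\varphi^*\mathfrak{M}$ and $\varphi(\lambda)$ in place of $\mathfrak{M}$ and $\lambda$, and tracing that normalisation through the definition of $\Psi$ is the heart of the argument; the reduction $M_\mu\otimes_{\mathcal{O}}\mathcal{O}[\tfrac1p]=\operatorname{FL}_\mu$ and the final comparison with Lemma~\ref{lem-filt} are then routine.
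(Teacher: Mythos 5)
Your proof is correct and matches the paper's treatment: the paper states this corollary as an immediate consequence of Proposition~\ref{prop-kis}, and your argument is exactly the expected unwinding — identifying $M_\mu(A[\tfrac1p])$ with $\operatorname{FL}_\mu(A[\tfrac1p])$, checking $X$ acts on $\widehat{\mathfrak{S}}_{A[\frac{1}{p}]}$-lattices, and matching the $\varphi(\iota)$- and $\beta$-frames so that Proposition~\ref{prop-kis}(2) gives the result. The frame-matching step you flag is indeed the only point requiring care, and you handle it correctly.
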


\section{Adapted bases}

If $\mathfrak{M} \in Z_d(A)$ and $\mathfrak{M}_\tau$ denotes the $A[[u]]$-submodule upon which $W(k)$ acts through $\tau$ then the Frobenius on $\mathfrak{M}$ restricts to a semilinear map $\mathfrak{M}_{\tau \circ \varphi} \rightarrow \mathfrak{M}_\tau[\frac{1}{\tau(E(u))}]$.

\begin{definition}
If $N_\tau \geq 0$ are given for each $\tau:K_0 \hookrightarrow \mathcal{O}[\frac{1}{p}]$ and $\mathfrak{M} \in Z_d(A)$ then we say an $\mathfrak{S}_A$-basis $\iota$ of $\mathfrak{M}$ is $(N_\tau)$-adapted if $\iota$ arises from $A[[u]]$-bases $\iota_\tau$ of $\mathfrak{M}_\tau$ satisfying 
$$
\varphi(\iota_{\tau \circ \varphi^{n+1}}) = \iota_{\tau \circ \varphi^n}
$$
for $1\leq n\leq N_\tau$. In particular, $\iota_{\tau \circ \varphi} = \varphi^{N_\tau}(\iota_{\tau \circ \varphi^{N_\tau+1}})$.
\end{definition}

\begin{lemma}\label{lem-adaptedbasis}
	Let $\mathfrak{M} \in Z_d(A)$ be $\mathfrak{S}_A$-free and suppose $\mu$ is a Hodge type for which there exist integers $N_\tau \geq 0$ with
	\begin{itemize}
		\item the grading on $\mu_\kappa$ concentrated in degree $0$ whenever $\kappa|_k = \tau \circ \varphi^n$ for $1\leq n \leq N_\tau$.
	\end{itemize} 
Then $\mathfrak{M}$ admits an $(N_\tau)$-adapted basis in the following two situations:
	\begin{enumerate}
		\item $\Psi(\mathfrak{M},\iota) \in M_\mu(A)$ for some $\mathfrak{S}_A$-basis $\iota$ and $A$ any $p$-adically complete $\mathcal{O}$-algebra.
		\item $A$ is a finite flat $\mathcal{O}$-algebra and $\mathfrak{M} \in Y^\mu_d(A)$.
	\end{enumerate}
\end{lemma}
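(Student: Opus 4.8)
The plan is to treat the two cases uniformly by first reducing both to the following statement: the $A[[u]]$-submodule $\mathfrak{M}_{\tau\circ\varphi^n}$ is spanned by $\varphi^*\mathfrak{M}_{\tau\circ\varphi^{n+1}}$ for $1\le n\le N_\tau$, where $\varphi^*$ denotes the image of the semilinear Frobenius. Indeed, once we know $\varphi:\varphi^*\mathfrak{M}_{\tau\circ\varphi^{n+1}}\to\mathfrak{M}_{\tau\circ\varphi^n}$ is an isomorphism (rather than merely an isomorphism after inverting $\tau(E(u))$), we may build the desired basis downward: choose any $A[[u]]$-basis $\iota_{\tau\circ\varphi^{N_\tau+1}}$ of $\mathfrak{M}_{\tau\circ\varphi^{N_\tau+1}}$ (possible since $\mathfrak{M}$ is $\mathfrak{S}_A$-free, hence each $\mathfrak{M}_\tau$ is $A[[u]]$-free of rank $d$), and then \emph{define} $\iota_{\tau\circ\varphi^n}:=\varphi(\iota_{\tau\circ\varphi^{n+1}})$ for $n=N_\tau,N_\tau-1,\ldots,1$; by the isomorphism claim each of these is again an $A[[u]]$-basis, and the compatibility $\varphi(\iota_{\tau\circ\varphi^{n+1}})=\iota_{\tau\circ\varphi^n}$ holds by construction. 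For the remaining components $\mathfrak{M}_\kappa$ with $\kappa$ not of the form $\tau\circ\varphi^n$ ($1\le n\le N_\tau$) we pick arbitrary $A[[u]]$-bases. Gluing the $\iota_\tau$ gives the $(N_\tau)$-adapted $\mathfrak{S}_A$-basis $\iota$.

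So the real content is: in each of the two situations, the $\tau\circ\varphi^n$-th component of the Frobenius is an honest isomorphism of $A[[u]]$-modules for $1\le n\le N_\tau$. The mechanism is the hypothesis that the grading on $\mu_\kappa$ is concentrated in degree $0$ at exactly these embeddings, which should force the elementary divisors of $\varphi$ at these components to be trivial. In case (2), where $\mathfrak{M}\in Y^\mu_d(A)$ with $A$ finite flat over $\mathcal{O}$, I would argue over $A[\tfrac1p]=\prod K_\kappa$: the associated filtered $\varphi$-module $D$ has Hodge type $\mu$, so $\operatorname{Fil}^\bullet(D\otimes_{K_0}K)$ is concentrated in degree $0$ on the factors corresponding to $\kappa$ with $\kappa|_{K_0}=\tau\circ\varphi^n$; feeding this into the second bullet of Proposition~\ref{prop-kis} shows that after $\otimes\,\widehat{\mathfrak{S}}_{A[1/p]}$ the Frobenius is an isomorphism there, i.e. its cokernel is $E(u)$-torsion-free; combined with $A$-flatness of $\mathfrak{M}$ and the fact that $\mathfrak{M}$ is $\mathfrak{S}_A$-saturated inside $\varphi^*\mathfrak{M}[\tfrac1{E(u)}]$, the cokernel vanishes integrally. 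In case (1), where $\Psi(\mathfrak{M},\iota)\in M_\mu(A)$ and $A$ is an arbitrary $p$-adically complete $\mathcal{O}$-algebra, I would instead use the definition of $M_\mu$ as the closure of $\operatorname{FL}_\mu$: at the embeddings where $\mu_\kappa$ sits in degree $0$, the corresponding component of $\operatorname{FL}_\mu$ is a point, so the same component of $\operatorname{Gr}$ hit by $M_\mu$ is the trivial coset; spelling out $\Psi(\mathfrak{M},\iota)=\varphi(\iota)$ then says precisely that $\varphi(\iota_{\tau\circ\varphi^{n+1}})$ already generates $\mathfrak{M}_{\tau\circ\varphi^n}$ over $A[[u]]$ up to an element of $\operatorname{GL}_d(A[[u]])$, which is the isomorphism claim. (One should be slightly careful passing from ``$\Psi(\mathfrak{M},\iota')\in M_\mu$ for one basis $\iota'$'' to the same for the adapted basis we are constructing; but the two bases differ by an element of $L^+\operatorname{GL}_d$ acting through $\cdot_\varphi$, and $M_\mu$ is $L^+\operatorname{GL}_d$-stable, so this is harmless.)

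The step I expect to be the main obstacle is the integrality claim in case (1): going from the characterisation of $M_\mu$ after inverting $p$ (where it equals $\operatorname{FL}_\mu$ and the degree-$0$ components are genuinely trivial) to a statement over a general $p$-torsion $A$ requires knowing that taking scheme-theoretic closure does not enlarge the relevant component of the affine Grassmannian beyond the trivial coset — equivalently, that the closed subscheme of $\operatorname{Gr}$ cut out by ``the $\tau\circ\varphi^n$-component is trivial'' already contains $\operatorname{FL}_\mu[\tfrac1p]$ and is closed, hence contains $M_\mu$. This is clean because the condition ``lattice equals the standard lattice in a fixed factor'' is a closed condition on $\operatorname{Gr}$ (it is the preimage of the base point under a projection to a single-factor affine Grassmannian), so the closure of $\operatorname{FL}_\mu$ stays inside it; but one has to phrase this carefully in the multi-embedding setting and check it interacts correctly with the Beauville--Laszlo description of $\operatorname{Gr}(A)$ as $\mathfrak{S}_A$-lattices. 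Once that closedness is in place, both cases conclude by the downward construction of the $\iota_\tau$ described above.
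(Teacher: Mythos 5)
Your proposal is correct and follows the paper's argument: reduce to showing that the matrices $C_{\tau\circ\varphi^n}$ of Frobenius lie in $\operatorname{GL}_d(A[[u]])$ for $1\le n\le N_\tau$, deduce this in case (2) from the second bullet of Proposition~\ref{prop-kis} together with $p$-torsion-freeness of $A$, and in case (1) from the fact that the locus of $\operatorname{Gr}$ where a given component is the standard lattice is closed and contains $\operatorname{FL}_\mu$, hence $M_\mu$. One caveat: your parenthetical claim that $M_\mu$ is stable under all of $L^+\operatorname{GL}_d$ acting through $\cdot_\varphi$ is false for non-minuscule $\mu$ (e.g.\ for $\mu=(0,-2)$ one has $\operatorname{FL}_\mu\cong\mathbb{P}^1$ sitting strictly inside the two-dimensional orbit of $\operatorname{diag}(1,E(u)^2)$; the paper only ever uses stability under $\operatorname{GL}_d(W(k)\otimes_{\mathbb{Z}_p}A)$ and under matrices congruent to $1$ to high order), but this does not affect your proof since the condition ``$\varphi:\varphi^*\mathfrak{M}_{\tau\circ\varphi^{n+1}}\to\mathfrak{M}_{\tau\circ\varphi^n}$ is an isomorphism'' is basis-independent and the lemma only asks for existence of an adapted basis.
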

\begin{proof}
	Since $\mathfrak{M}$ is $\mathfrak{S}_A$-free we can always choose a basis $\iota$ arising from $A[[u]]$-bases $\iota_\tau$ of $\mathfrak{M}_\tau$. If $\varphi(\iota_{\tau \circ \varphi}) = \iota_\tau C_\tau$ for $C_\tau \in \operatorname{GL}_d(A[[u]][\frac{1}{\tau(E(u))}])$ then the existence of an $(N_\tau)$-adapted basis is ensured by the fact that $C_{\tau \circ \varphi^n} \in \operatorname{GL}_d(A[[u]])$ for $1 \leq n\leq N_\tau$.
	
	If $\mathfrak{M} \in Y^\mu_d(A)$ has associated filtered $\varphi$-module $D$ then the grading on $D_{K,\tau}$ is concentrated in degree zero if and only if the $\tau$-th part of $\sum_{i \in \mathbb{Z}} \operatorname{Fil}^i(D_K) \otimes_{K\otimes_{\mathbb{Z}_p} A} E(u)^{-i}\widehat{\mathfrak{S}}_{A[\frac{1}{p}]}$ and the $\tau$-th part of $D_K \otimes_{K \otimes_{\mathbb{Z}_p} A} \widehat{\mathfrak{S}}_{A[\frac{1}{p}]}$ coincide. By Proposition~\ref{prop-kis} this occurs if and only if the $\tau$-th parts of $\varphi^*\mathfrak{M} \otimes_{\mathfrak{S}_A} \widehat{\mathfrak{S}}_{A[\frac{1}{p}]}$ and $\mathfrak{M} \otimes_{\mathfrak{S}_A} \widehat{\mathfrak{S}}_{A[\frac{1}{p}]}$ coincide. This occurs if and only if $C_\tau$ is contained in the $\tau$-th part of $\operatorname{GL}_d(\widehat{\mathfrak{S}}_{A[\frac{1}{p}]})$, which is equivalent to $C_\tau \in \operatorname{GL}_d(A[[u]])$. When $\Psi(\mathfrak{M},\iota) \in M_\mu(A)$ the argument is similar.
\end{proof}

\section{Integrality}\label{sec-int}

Here we produce integrality constraints on the isomorphism $\xi$ from Proposition~\ref{prop-kis}. These generalise \cite[\S 17]{B23}. To do so observe that, since the $E(u)$-adic completion $\widehat{\mathfrak{S}}_{A[\frac{1}{p}]}$ of $(K_0 \otimes_{\mathbb{Z}_p} A)[u]$ from Section~\ref{sec-affinegrass} is a $K \otimes_{\mathbb{Z}_p} A$-algebra, we have a decomposition  $\widehat{\mathfrak{S}}_{A[\frac{1}{p}]} \cong \prod_{\kappa:K \rightarrow \mathcal{O}[\frac{1}{p}]} \widehat{\mathfrak{S}}_{A[\frac{1}{p}],\kappa}$ according to the action of $K$. We can identify  $\widehat{\mathfrak{S}}_{A[\frac{1}{p}],\kappa}$ with the $u -\kappa(\pi)$-th completion of $A[\frac{1}{p},u]$, which in turn identifies with the power series ring over $A[\frac{1}{p}]$ in the variable $u-\kappa(\pi)$. We then refer to the image of $f \in \mathcal{O}^{\operatorname{rig}}[\frac{1}{\varphi(\lambda)}]$ under the composite
$$
\mathcal{O}^{\operatorname{rig}}_A[\tfrac{1}{\varphi(\lambda)}] \rightarrow \widehat{\mathfrak{S}}_{A[\frac{1}{p}]} \rightarrow \widehat{\mathfrak{S}}_{A[\frac{1}{p}],\kappa} = A[\tfrac{1}{p}][[u-\kappa(\pi)]]
$$
as its Taylor expansion around $u =\kappa(\pi)$. One has the following easy lemma:

\begin{lemma}\label{lem-truncate}
	Suppose $X \in \operatorname{GL}_d(\widehat{\mathfrak{S}}_{A[\frac{1}{p}]})$ has Taylor expansion around $u = \kappa(\pi)$ congruent to $1$ modulo $(u-\kappa(\pi))^{a_\kappa}$ for $a_\kappa \geq 0$. Then $X$ acts trivially on $M_\mu[\frac{1}{p}]$ whenever $\mu$ is a Hodge type with $\kappa$-th part concentrated in degrees $[-a_\kappa,0]$.
\end{lemma}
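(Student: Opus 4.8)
The plan is to verify the statement on the functor of points, where it reduces to an elementary containment of lattices in a power‑series ring. By Lemma~\ref{lem-filt} the scheme $M_\mu[\tfrac{1}{p}]$ is identified with $\operatorname{FL}_\mu$, so ``$X$ acts trivially on $M_\mu[\tfrac{1}{p}]$'' means precisely that $X$ fixes every $R$‑point of $M_\mu[\tfrac{1}{p}]$, for $R$ an arbitrary $A[\tfrac{1}{p}]$‑algebra. Such a point is a separated, exhaustive, descending filtration $\operatorname{Fil}^\bullet$ of type $\mu$ on $(K\otimes_{\mathbb{Q}_p}R)^d$, whose image in $\operatorname{Gr}(R)$ is the lattice
$$
L \;=\; \sum_{i\in\mathbb{Z}} \operatorname{Fil}^i \otimes_{K\otimes_{\mathbb{Q}_p}R} E(u)^{-i}\,\widehat{\mathfrak{S}}_R \;\subset\; \widehat{\mathfrak{S}}_R[\tfrac{1}{E(u)}]^d ,
$$
and one must show $X\cdot L = L$. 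Note that the hypothesis on $X$, being a congruence of Taylor expansions, is preserved by the base change $A[\tfrac{1}{p}]\to R$.

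Next I would pass to the decomposition $\widehat{\mathfrak{S}}_R \cong \prod_{\kappa:K\hookrightarrow\mathcal{O}[\frac{1}{p}]} \widehat{\mathfrak{S}}_{R,\kappa}$ recalled just before the lemma, now over $R$, with $\widehat{\mathfrak{S}}_{R,\kappa} = R[[u-\kappa(\pi)]]$. Under it $L = \prod_\kappa L_\kappa$, $X=(X_\kappa)_\kappa$, and the action of $X$ is componentwise, so it suffices to prove $X_\kappa L_\kappa = L_\kappa$ for each $\kappa$. Fixing $\kappa$, I would record two observations: (i) in $\widehat{\mathfrak{S}}_{R,\kappa}$ the element $E(u)$ equals $(u-\kappa(\pi))$ times a unit, because $E(u) = \prod_{\kappa'|_{K_0}=\kappa|_{K_0}}(u-\kappa'(\pi))$ and for $\kappa'\ne\kappa$ the factor $u-\kappa'(\pi)$ has unit constant term $\kappa(\pi)-\kappa'(\pi)\in\mathcal{O}[\tfrac{1}{p}]^\times$; (ii) since $\mu_\kappa$ is concentrated in degrees $[-a_\kappa,0]$, separatedness and exhaustiveness force $\operatorname{Fil}^i_\kappa = R^d$ for $i\le -a_\kappa$ and $\operatorname{Fil}^i_\kappa=0$ for $i>0$. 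Combining (i) and (ii) with the defining formula for $L_\kappa$ yields the sandwich
$$
(u-\kappa(\pi))^{a_\kappa}\,\widehat{\mathfrak{S}}_{R,\kappa}^d \;\subseteq\; L_\kappa \;\subseteq\; \widehat{\mathfrak{S}}_{R,\kappa}^d .
$$

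Finally I would conclude. By hypothesis $X_\kappa = 1 + (u-\kappa(\pi))^{a_\kappa}Y$ with $Y\in M_d(\widehat{\mathfrak{S}}_{R,\kappa})$; for $v\in L_\kappa\subseteq\widehat{\mathfrak{S}}_{R,\kappa}^d$ one gets $Yv\in\widehat{\mathfrak{S}}_{R,\kappa}^d$, hence $(u-\kappa(\pi))^{a_\kappa}Yv\in(u-\kappa(\pi))^{a_\kappa}\widehat{\mathfrak{S}}_{R,\kappa}^d\subseteq L_\kappa$, so $X_\kappa v\in L_\kappa$ and thus $X_\kappa L_\kappa\subseteq L_\kappa$. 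The congruence $X_\kappa\equiv 1$ modulo $(u-\kappa(\pi))^{a_\kappa}$ is inherited by $X_\kappa^{-1}$ (from $1-X_\kappa^{-1}= X_\kappa^{-1}(X_\kappa-1)$ and the fact that $(u-\kappa(\pi))^{a_\kappa}$ generates an ideal), so the same argument applied to $X_\kappa^{-1}$ gives $X_\kappa^{-1}L_\kappa\subseteq L_\kappa$, whence $X_\kappa L_\kappa = L_\kappa$. Reassembling over $\kappa$ proves the lemma. There is no serious obstacle here — the statement is genuinely elementary — and the only points needing a little care are the identification of $M_\mu[\tfrac{1}{p}]$ with the flag variety (so that its points are concretely the lattices $L$ above) and the bookkeeping of the $\kappa$‑decomposition, in particular the remark that $E(u)$ and $u-\kappa(\pi)$ differ by a unit in each completed factor.
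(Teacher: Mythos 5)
Your proof is correct and takes essentially the same route as the paper's: decompose along the embeddings $\kappa$, use that $\mu_\kappa$ concentrated in $[-a_\kappa,0]$ sandwiches the lattice between $(u-\kappa(\pi))^{a_\kappa}\widehat{\mathfrak{S}}_{R,\kappa}^d$ and $\widehat{\mathfrak{S}}_{R,\kappa}^d$, then apply the congruence on $X$. Your only additions --- making explicit that $E(u)$ and $u-\kappa(\pi)$ differ by a unit in each completed factor, and running the argument for $X^{-1}$ to upgrade containment to equality --- are points the paper leaves implicit.
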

\begin{proof}
	The $\kappa$-th-part of $\sum_{i \in\mathbb{Z}} \operatorname{Fil}^i \otimes_{K\otimes_{\mathbb{Z}_p} A} E(u)^{-i} \widehat{\mathfrak{S}}_A$ is $\sum_{i \in\mathbb{Z}} \operatorname{Fil}^i_\kappa \otimes_{A} (u-\kappa(\pi))^{-i} \widehat{\mathfrak{S}}_{A[\frac{1}{p}],\kappa}$ and we have to show that $X-1$ maps $(u-\kappa(\pi))^{-j}\operatorname{Fil}^j_\kappa$ into $\sum_{i \in\mathbb{Z}} \operatorname{Fil}^i_\kappa \otimes_{A} (u-\kappa(\pi))^{-i} \widehat{\mathfrak{S}}_{A[\frac{1}{p}],\kappa}$ for all $j$ when $\operatorname{Fil}^\bullet$ has type $\mu$. The assumption on $\mu$ implies $\operatorname{Fil}_\kappa^j =0$ for $j > 0$ and $\operatorname{Fil}^j_\kappa = A[\frac{1}{p}]^d$ for $j \leq -a_\kappa$. The assertion is therefore immediate when $j > 0$ and, when $j \leq 0$, the assumption on the Taylor expansion of $X$ implies $X-1$ maps $(u-\kappa(\pi))^{-j}\operatorname{Fil}^j_\kappa$ into $(u-\kappa(\pi))^{-j+a_\kappa} \widehat{\mathfrak{S}}_{A[\frac{1}{p}],\kappa}^d =\operatorname{Fil}^{-a_\kappa+j} \otimes_{A[\frac{1}{p}]} (u-\kappa(\pi))^{-j+a_\kappa} \widehat{\mathfrak{S}}_{A[\frac{1}{p}],\kappa}$ as required.
\end{proof}
Though $X$ in Corollary~\ref{cor-XMmu} will never be $p$-adically integral we will be able to impose some integrality on the first few terms of its Taylor expansions. This, combined with the previous lemma, allows us to replace $X$ in Corollary~\ref{cor-XMmu} with an element in $\operatorname{GL}_d(\mathfrak{S}_A)$.
\begin{proposition}\label{prop-intxi} 
	Assume $K(\pi^{1/p^\infty}) \cap K(\epsilon_\infty) = K$ and $\mathfrak{M} \in Y^\mu(A)$ for $A$ a finite flat $\mathcal{O}$-algebra. For each $\tau:K_0 \hookrightarrow \mathcal{O}[\frac{1}{p}]$ suppose there are $N_\tau \geq 0$ such that
	\begin{itemize}
		\item the grading on $D_{K,\tau\circ \varphi^n}$ is concentrated in degree $0$ for $1 \leq n \leq N_\tau$
	\end{itemize} 
	If $\iota$ is an $(N_\tau)$-adapted basis and $X \in \operatorname{GL}_d(\mathcal{O}^{\operatorname{rig}}_A[\tfrac{1}{\varphi(\lambda)}])$ is such that $\xi(\varphi(\iota)) =\beta X$ then the Taylor expansion 
	$$
	\sum_{i \geq 0} (u-\kappa(\pi))^i X_i
	$$
	of $X$ around $u = \kappa(\pi)$ has $X_0^{-1} X_i \in \frac{p^{N_\tau}}{i}\kappa(\pi)^{p^{N_\tau+1} - i} \operatorname{Mat}(A)$ for $0 < i \leq p^{N_{\tau+1}}$.
\end{proposition}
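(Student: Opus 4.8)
The plan is to read the integrality off the $G_K$-action on $\mathfrak{M} \otimes_{\mathfrak{S}_A} A_{\operatorname{inf},A}$, extending the argument of \cite[\S 17]{B23}. First recall that the isomorphism $\xi$ of Proposition~\ref{prop-kis} becomes $G_K$- and $\varphi$-equivariant after a suitable base change $\mathcal{O}^{\operatorname{rig}}_A[\tfrac{1}{\varphi(\lambda)}] \to B$, $u \mapsto [\pi^\flat]$, with $B$ a localisation of $A_{\operatorname{inf},A}$, and that $D$ is recovered as the $G_K$-invariants, so the basis $\beta$ of $D$ is $G_K$-fixed. Writing $U_\sigma$ for the matrix of the $G_K$-action on $\varphi^* \mathfrak{M} \otimes_{\mathfrak{S}_A} A_{\operatorname{inf},A}$ in the basis $\iota \otimes 1$, equivariance of $\xi$ applied to $\iota \otimes 1$ gives $\beta\,\sigma(X) = \sigma(\xi(\iota\otimes 1)) = \xi(\sigma(\iota\otimes1)) = \beta\,X\,U_\sigma$, hence $X^{-1}\sigma(X) = U_\sigma$ for all $\sigma \in G_K$.

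Next I would locate $U_\sigma$ using the $(N_\tau)$-adapted basis. On the $\tau$-component the adaptedness identifies $(\varphi^* \mathfrak{M})_\tau$, together with its canonical basis $\iota \otimes 1$, with the pullback of $\mathfrak{M}_{\tau\circ\varphi^{N_\tau+1}}$ along $\varphi^{N_\tau+1}$ equipped with the basis coming from $\iota_{\tau\circ\varphi^{N_\tau+1}}$. Since the crystalline $G_K$-action commutes with $\varphi$, applying $\varphi^{N_\tau+1}$ to the defining congruence \eqref{eq-Galoisaction} for $\mathfrak{M}_{\tau\circ\varphi^{N_\tau+1}}$ shows that, on the $\tau$-component,
$$
U_\sigma \equiv 1 \pmod{u^{p^{N_\tau+1}}\,\varphi^{N_\tau}(\mu)\,A_{\operatorname{inf},A}},
$$
where $\mu = [\epsilon]-1$ is the period of \eqref{eq-Galoisaction}. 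Both sources of extra leverage are visible here: the exponent $p^{N_\tau+1}$ on $u$ (rather than $p$) comes from the $N_\tau+1$ iterated Frobenii provided by the adapted basis, and the divisibility $\varphi^{N_\tau}(\mu) = \varphi^{N_\tau}(t)\cdot(\text{unit}) = p^{N_\tau}\,t\cdot(\text{unit})$ with $t = \log[\epsilon]$ accounts for the factor $p^{N_\tau}$.

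The last step converts this into a bound on the Taylor coefficients of $X$ at $u = \kappa(\pi)$. Let $c : G_K \to \mathbb{Z}_p$ be the cocycle with $\sigma([\pi^\flat]) = [\epsilon]^{c(\sigma)}[\pi^\flat]$; the hypothesis $K(\pi^{1/p^\infty}) \cap K(\epsilon_\infty) = K$ is exactly what makes the image of $c$ open in $\mathbb{Z}_p$. Writing $X = \sum_j u^j x_j$ for the expansion at $u = 0$ (legitimate since $\varphi(\lambda)$ is a unit there), one has $\sigma(X) - X = \sum_j ([\epsilon]^{jc(\sigma)}-1)u^j x_j$; feeding in $\sigma(X) - X = X(U_\sigma - 1)$ and the congruence above, dividing by $t$, and letting $c(\sigma) \to 0$ (so that $([\epsilon]^{jc(\sigma)}-1)/(c(\sigma)t) \to j$) yields
$$
u\tfrac{d}{du}X \;\in\; p^{N_\tau}\,u^{p^{N_\tau+1}}\,\operatorname{Mat}(A_{\operatorname{inf},A}),
$$
and hence an $A$-integral statement for its Taylor expansion at $u = \kappa(\pi)$ (where $\varphi(\lambda)$ is again a unit). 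Writing $X = \sum_{i\ge0}(u-\kappa(\pi))^i X_i$, the coefficient of $(u-\kappa(\pi))^i$ on the left is $iX_i + (i+1)\kappa(\pi)X_{i+1}$, while $u^{p^{N_\tau+1}}$ contributes $\binom{p^{N_\tau+1}}{j}\kappa(\pi)^{p^{N_\tau+1}-j}$ in degree $j$; using $v_p\binom{p^{N_\tau+1}}{j} = N_\tau+1-v_p(j)$ together with $v_p(j) \le N_\tau+1$ for $j \le p^{N_\tau+1}$, and that $X_0 \in \operatorname{GL}_d(A)$ (which follows from the integral comparison at $u = \kappa(\pi)$ furnished by the second bullet of Proposition~\ref{prop-kis}), one solves this recursion upward in $i$ to get $X_0^{-1}X_i \in \tfrac{p^{N_\tau}}{i}\kappa(\pi)^{p^{N_\tau+1}-i}\operatorname{Mat}(A)$ for $0 < i \le p^{N_\tau+1}$.

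The main obstacle is the penultimate display: passing from the $A_{\operatorname{inf},A}$-congruence for $U_\sigma$ to the clean differential bound on $u\tfrac{d}{du}X$, and then ensuring that after specialising to the $(u-\kappa(\pi))$-adic completion one really lands in $A$ (not just $A[\tfrac1p]$) with exactly the power $p^{N_\tau}$. This requires working carefully with the structure of $A_{\operatorname{inf},A}$, the element $[\pi^\flat]-\kappa(\pi)$, and the cocycle $c$ — essentially the argument of \cite[\S 17]{B23}, but now carrying along the additional Frobenius twists supplied by the adapted basis, which is precisely what upgrades the power of $u$ from $p$ to $p^{N_\tau+1}$ and the power of $p$ from $1$ to $p^{N_\tau}$.
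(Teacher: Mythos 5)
Your route is in essence the paper's own: both arguments extract from the integral crystalline $G_K$-action a bound on the logarithmic derivative $\partial(X)=u\frac{d}{du}X$, use the $(N_\tau)$-adapted basis (i.e.\ $N_\tau+1$ iterated Frobenii, via $p\varphi\circ N_\nabla=N_\nabla\circ\varphi$) to upgrade the powers of $u$ and $p$, and then solve the recursion $nX_n+\kappa(\pi)(n+1)X_{n+1}=\sum_{i+j=n}X_iN_j$ for the Taylor coefficients at $u=\kappa(\pi)$. The paper packages the Galois input as the operator $N_\nabla=1\otimes u\frac{d}{du}$ on $D\otimes_{K_0}\mathcal{O}^{\operatorname{rig}}[\frac{1}{\varphi(\lambda)}]$ and quotes its integrality $N_\nabla(\mathfrak{M})\subset\mathfrak{M}\otimes\frac{u}{p}S_{\operatorname{max}}$ from \cite[20.1]{B23} (Proposition~\ref{prop-Nablaintegral}; this is exactly where $K(\pi^{1/p^\infty})\cap K(\epsilon_\infty)=K$ enters). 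Your limiting argument with the cocycle $c$ is precisely the construction of that operator from the Galois action, cf.\ \cite[20.9]{B23}, so the two proofs differ only in whether this step is cited or re-derived.

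There is, however, a genuine gap at the step you yourself flag, and it is not merely bookkeeping. Your intermediate display $u\frac{d}{du}X\in p^{N_\tau}u^{p^{N_\tau+1}}\operatorname{Mat}(A_{\operatorname{inf},A})$ is too strong: the correct statement is that $\partial(X)=XN$ with the relevant part of $N$ having entries in $p^{N_\tau}u^{p^{N_\tau+1}}\varphi^{N_\tau+1}(S_{\operatorname{max}})\otimes_{\mathbb{Z}_p}A$, where $S_{\operatorname{max}}=\mathcal{O}^{\operatorname{rig}}[\frac{1}{\varphi(\lambda)}]\cap W(k)[u,\frac{u^e}{p}]$ carries $p$-power denominators. (The same over-optimism appears in the heuristic $\varphi^{N_\tau}(\mu)=p^{N_\tau}t\cdot(\text{unit})$, which holds in $B_{\operatorname{dR}}^+$ but not integrally in $A_{\operatorname{inf},A}$, where $\varphi^{N_\tau}(\mu)$ is not divisible by $p$ at all.) Consequently a typical entry of $N$ is $\sum_{j\geq1}u^{p^{N_\tau+1}j}f_j'/\kappa(\pi)^{j-1}$ with $f_j'\in A$, and establishing $N_i\in p^{N_\tau}\kappa(\pi)^{p^{N_\tau+1}-i}\operatorname{Mat}(A)$ requires estimating the valuation of $\binom{p^{N_\tau+1}j}{i}\kappa(\pi)^{jp^{N_\tau+1}-i-j+1}$ for \emph{all} $j\geq1$, not only the $j=1$ term that your binomial expansion of a single integral multiple of $u^{p^{N_\tau+1}}$ accounts for; this is the actual content of the final paragraph of the paper's proof. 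A minor further point: your claim that $X_0\in\operatorname{GL}_d(A)$ is neither needed (the proposition is deliberately phrased in terms of $X_0^{-1}X_i$) nor a consequence of Proposition~\ref{prop-kis} --- in general one only has $X_0\in\operatorname{GL}_d(A[\frac{1}{p}])$.
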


\begin{proof}
	We begin by considering a differential operator $N_\nabla$ on $D \otimes_{K_0} \mathcal{O}^{\operatorname{rig}}[\frac{1}{\varphi(\lambda)}]$ defined by 
	$$
	N_{\nabla}(d \otimes f) = d \otimes \partial(f), \qquad \partial = u\frac{d}{du}
	$$
	for $d \in D$ and $f \in \mathcal{O}^{\operatorname{rig}}[\frac{1}{\varphi(\lambda)}]$.	Notice that $N_\nabla$ is $\varphi$-equivariant, $A$-linear, and satisfies $p \varphi \circ N_\nabla = N_\nabla \circ \varphi$  (because $p \varphi \circ \partial = \partial \circ \varphi$). Under the isomorphism $\xi$ the operator $N_\nabla$ corresponds to a differential operator on $\varphi^*\mathfrak{M} \otimes_{\mathfrak{S}} \mathcal{O}^{\operatorname{rig}}[\frac{1}{\varphi(\lambda)}]$ which we again denote by $N_\nabla$. If $N_\nabla(\varphi(\iota)) = \varphi (\iota) N$ for $N \in \operatorname{Mat}(\mathcal{O}^{\operatorname{rig}}_A[\frac{1}{\varphi(\lambda)}])$ then the identify $N_\nabla \circ \xi = \xi \circ N_\nabla$ implies
$X N = \partial(X)$.
	In terms of Taylor expansions around $u= \kappa(\pi)$, if $N = \sum_{i \geq 0} (u-\kappa(\pi))^i N_i$, this gives the recurrence 
	$$
 \sum_{i+j = n} X_i N_j = 	n X_n + \kappa(\pi)(n+1) X_{n+1} 
	$$
	or equivalently, $n X_0^{-1} X_n + \kappa(\pi)(n+1)X_0^{-1} X_{n+1} = \sum_{i+j = n} (X_0^{-1}X_i) N_j$. Now suppose that
$$
N_i \in p^{N_{\tau}}\kappa(\pi)^{p^{N_\tau+1} - i}\operatorname{Mat}(A)
$$
for $0 \leq i \leq p^{N_{\tau+1}}$ and that $X_0^{-1} X_i \in \frac{p^{N_{\tau}}\kappa(\pi)^{p^{N_\tau+1} - i}}{i} \operatorname{Mat}(A)$ for $0<i \leq n <p^{N_\tau+1}$. Then the entries of $\sum_{i+j = n} (X_0^{-1}X_i) N_j$ are contained in
$$
 \sum_{0\leq  j \leq n-1} p^{2N_\tau}\frac{\kappa(\pi)^{2p^{N_{\tau}+1}-n}}{n-j}\operatorname{Mat}(A) +  \underbrace{p^{N_{\tau}}\kappa(\pi)^{p^{N_\tau+1}-n}\operatorname{Mat}(A)}_{j=n}
$$ 
This is contained in $p^{N_\tau} \kappa(\pi)^{p^{N_\tau+1}- n} \operatorname{Mat}(A)$ because $p^{N_\tau}\frac{\kappa(\pi)^{p^{N_\tau+1}}}{n-j}$ is integral whenever $0 < n-j < p^{N_\tau+1}$. Inducting on $n$ therefore shows that the above divisibility of the $N_i$'s implies the proposition.

An advantage of $N_\nabla$ over $\xi$ is that, while its construction requires $p$ to be inverted, it can also be described in terms of the (integral) Galois action on the lattice $T$ (see \cite[20.9]{B23}). In this way one obtains the following constraints on $N_\nabla$:
	
	\begin{proposition}\label{prop-Nablaintegral}
		If $m_0 \in \mathfrak{M}$. Then 
		$$
		N_\nabla(m_0) \in \mathfrak{M} \otimes_{\mathfrak{S}} \tfrac{u}{p} S_{\operatorname{max}}
		$$
		where $S_{\operatorname{max}} := \mathcal{O}^{\operatorname{rig}}[\frac{1}{\varphi(\lambda)}] \cap W(k)[u,\frac{u^e}{p}]$.
	\end{proposition}
\begin{proof}
	See \cite[20.1]{B23}. This is where we use that $K(\pi^{1/p^\infty}) \cap K(\epsilon_{\infty}) = K$.
\end{proof}

Iterating the identity $p  \varphi \circ N_\nabla = N_\nabla \circ \varphi$ gives $N_\nabla \circ \varphi^{N_\tau+1} = p^{N_\tau+1} \varphi^{N_\tau+1} \circ N_\nabla$. If $\iota$ is an $(N_\tau)$-adapted basis of $\mathfrak{M}$ then $\varphi(\iota_{\tau \circ \varphi}) = \varphi^{N_\tau+1}(\iota_{\tau \circ \varphi^{N_\tau+1}})$ and so Proposition~\ref{prop-Nablaintegral} implies $N_\nabla(\varphi(\iota_\tau \circ \varphi)) \subset \mathfrak{M} \otimes_{\mathfrak{S}} p^{N_\tau}u^{p^{N_\tau+1}} \varphi^{N_\tau+1}( S_{\operatorname{max}})$. In other words, $N_\nabla( \varphi(\iota_{\tau \circ \varphi})) = \varphi(\iota_{\tau \circ \varphi}) N^{(\tau)}$ with the entries of $N^{(\tau)}$ contained in the $\tau$-th part of $p^{N_\tau }u^{p^{N_\tau+1}}\varphi^{N_\tau+1}(S_{\operatorname{max}}) \otimes_{\mathbb{Z}_p} A$. Since the matrix $N^{(\tau)}$ is just the $\tau$-th part of the matrix $N$ defined above, the Taylor expansions of $N$ and $N_\tau$ around $u =\kappa(\pi)$ are equal whenever $\kappa|_{K_0}= \tau$. Therefore, to finish the proof one just has to show that if $f \in u^{p^{N_\tau+1}}\varphi^{N_\tau+1}(S_{\operatorname{max}}) \otimes_{\mathbb{Z}_p} A$ then the Taylor expansion $f =\sum_{i \geq 0} f_i (u-\kappa(\pi))^i$ around $u = \kappa(\pi)$ has $f_i \in \kappa(\pi)^{p^{N_\tau+1}-i}A$. To show this we use the formula $f_i = \frac{1}{i!} \left( \frac{d}{du} \right)^i(f)_{u=\kappa(\pi)}$
The fact that $f \in u^{p^{N_\tau+1}}\varphi^{N_\tau+1}(S_{\operatorname{max}}) \otimes A$ implies $f = \sum_{j \geq 1} u^{p^{N_\tau+1}j} \frac{f_j'}{\kappa(\pi)^{j-1}}$ with $f_j' \in A$. Thus
$$
f_i = \sum_{j \geq 1} \binom{p^{N_\tau+1} j }{i} \kappa(\pi)^{jp^{N_\tau+1} - i -j +1}f_j' 
$$
For $0 \leq i \leq p^{N_\tau+1}$ the $\kappa(\pi)$-adic valuation of $\binom{p^{N_\tau+1} j }{i} \kappa(\pi)^{jp^{N_\tau+1}  - i -j +1}$ is $\geq p^{N_\tau+1} -i$ whenever $j \geq 1$,\footnote{Notice this last estimate is weak when $i \geq 1$ because we ignore the potential divisibility of $\binom{p^{N_\tau+1} j }{i}$. However, when $i=0$ it is sharp and this it is this which critically limits our estimates of the $X_0^{-1}X_i$'s} and this finishes the proof.
\end{proof}

As an application of Proposition~\ref{prop-intxi} we can improve Corollary~\ref{cor-XMmu} to an integral assertion:
\begin{proposition}\label{prop-specialfibre}
	Additionally to the assumptions from Proposition~\ref{prop-intxi} suppose 
		\begin{itemize}
		\item the grading on $D_{K,\kappa}$ is concentrated in degrees $[-a_\kappa,0]$ whenever $\kappa|_{K_0} = \tau$ for $a_\kappa$ satisfying
		$$
		\sum_{\kappa|_{K_0} = \tau} a_\kappa \leq \frac{p^{N_\tau+1}-1}{\nu}+1,\qquad \nu = \operatorname{max} v_\pi(\pi-\pi')
		$$
		where $v_\pi$ denotes the $\pi$-adic valuation and the maximum runs over distinct conjugates $\pi'$ of $\pi$ in $\mathcal{O}_K$
	\end{itemize}
	Then, for any $(N_\tau)$-adapted basis $\iota$ of $\mathfrak{M}$ there exits $X_{\operatorname{trun}} \in \operatorname{Mat}(\mathfrak{m}_{\mathcal{O}}\mathfrak{S}_A)$ with $(1+ X_{\operatorname{trun}}) \cdot \Psi(\mathfrak{M},\iota) \in M_\mu(A)$.
\end{proposition}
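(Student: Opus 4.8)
\emph{Plan of proof.} The plan is to deform the element $X$ of Corollary~\ref{cor-XMmu} into an integral one by a controlled Hermite interpolation, using Proposition~\ref{prop-intxi} and Proposition~\ref{prop-Nablaintegral} to keep track of denominators. First I record two reductions. Since $M_\mu$ is the scheme theoretic closure of the $\mathcal{O}[\tfrac1p]$-flat scheme $\operatorname{FL}_\mu$ it is $\mathcal{O}$-flat, and as $A$ is finite flat over $\mathcal{O}$ it is $p$-torsion free; hence an $A$-point of $\operatorname{Gr}$ lies in $M_\mu$ as soon as its base change to $A[\tfrac1p]$ does. It therefore suffices to produce $X_{\operatorname{trun}}\in\operatorname{Mat}(\mathfrak{m}_{\mathcal{O}}\mathfrak{S}_A)$ (then $1+X_{\operatorname{trun}}\in\operatorname{GL}_d(\mathfrak{S}_A)$ automatically, $\mathfrak{S}_A$ being $\mathfrak{m}_{\mathcal O}$-adically complete) with $(1+X_{\operatorname{trun}})\cdot\Psi(\mathfrak{M},\iota)[\tfrac1p]\in M_\mu(A[\tfrac1p])$. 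Letting $X$ be as in Corollary~\ref{cor-XMmu}, Lemma~\ref{lem-truncate} shows that if the Taylor expansion of $Y:=X(1+X_{\operatorname{trun}})^{-1}$ around $u=\kappa(\pi)$ is congruent to $1$ modulo $(u-\kappa(\pi))^{a_\kappa}$ for every $\kappa$, then $Y$ and $Y^{-1}$ act trivially on $M_\mu[\tfrac1p]$, so that
$$(1+X_{\operatorname{trun}})\cdot\Psi(\mathfrak{M},\iota)[\tfrac1p]=Y^{-1}\cdot\bigl(X\cdot\Psi(\mathfrak{M},\iota)[\tfrac1p]\bigr)=X\cdot\Psi(\mathfrak{M},\iota)[\tfrac1p]\in M_\mu(A[\tfrac1p]).$$
Thus the whole problem becomes: construct $X_{\operatorname{trun}}\in\operatorname{Mat}(\mathfrak{m}_{\mathcal O}\mathfrak{S}_A)$ with $1+X_{\operatorname{trun}}$ congruent to $X$ modulo $(u-\kappa(\pi))^{a_\kappa}$ at each $\kappa$.

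To build $X_{\operatorname{trun}}$ I work one $\tau:K_0\hookrightarrow\mathcal{O}[\tfrac1p]$ at a time. Set $\Pi^{(\tau)}(u)=\prod_{\kappa|_{K_0}=\tau}(u-\kappa(\pi))^{a_\kappa}$; as the conjugates $\kappa(\pi)$ lie in $\mathcal{O}$, this is monic of degree $N_\tau':=\sum_{\kappa|_{K_0}=\tau}a_\kappa$ with coefficients in $\mathcal{O}$. I take $X_{\operatorname{trun}}^{(\tau)}$ to be the matrix of polynomials of degree $<N_\tau'$ interpolating $X^{(\tau)}-1$ to order $a_\kappa$ at each $\kappa(\pi)$, so that $1+X_{\operatorname{trun}}^{(\tau)}\equiv X^{(\tau)}\bmod(u-\kappa(\pi))^{a_\kappa}$ by construction; everything now rests on showing $X_{\operatorname{trun}}^{(\tau)}\in\operatorname{Mat}(\mathfrak{m}_{\mathcal O}A[u])$. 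To this end I first normalise: replacing the basis $\beta$ of $D$ by $\beta X^{(\tau)}(0)$ in the $\tau$-component changes $X^{(\tau)}$ by left multiplication by a constant (so the conclusion of Proposition~\ref{prop-intxi} is unaffected) and only permutes $M_\mu[\tfrac1p]$ (since constant matrices in $\operatorname{GL}_d(K_0\otimes_{\mathbb{Z}_p}A)$ act on $\operatorname{FL}_\mu$), so I may assume $X^{(\tau)}(0)=1$. Then Proposition~\ref{prop-Nablaintegral} together with $(N_\tau)$-adaptedness of $\iota$ gives $\partial(X^{(\tau)})=X^{(\tau)}N^{(\tau)}$ with $N^{(\tau)}$ divisible by $u^{p^{N_\tau+1}}$; comparing coefficients of $u^j$ for $1\le j<p^{N_\tau+1}$ forces $X^{(\tau)}\equiv 1\bmod u^{p^{N_\tau+1}}$, say $X^{(\tau)}=1+u^{p^{N_\tau+1}}g^{(\tau)}$. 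Dividing the integral polynomial $u^{p^{N_\tau+1}}$ by the monic $\Pi^{(\tau)}\in\mathcal{O}[u]$ gives a remainder $\rho^{(\tau)}\in\mathcal{O}[u]$ whose reduction modulo $\mathfrak{m}_{\mathcal O}$ is $u^{p^{N_\tau+1}}$ reduced modulo $\overline{\Pi^{(\tau)}}=u^{N_\tau'}$; this vanishes because $N_\tau'\le\frac{p^{N_\tau+1}-1}{\nu}+1\le p^{N_\tau+1}$ (using $\nu\ge 1$), so $\rho^{(\tau)}\in\mathfrak{m}_{\mathcal O}\mathcal{O}[u]$. Since $u^{p^{N_\tau+1}}\equiv\rho^{(\tau)}\bmod\Pi^{(\tau)}$ and $g^{(\tau)}$ is regular at each $\kappa(\pi)$, the interpolant $X_{\operatorname{trun}}^{(\tau)}$ equals the interpolant of $\rho^{(\tau)}g^{(\tau)}$.

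The hard part is then the valuation bookkeeping in this last interpolation. Hermite interpolation at the points $\kappa(\pi)$ introduces denominators which are products of powers of the differences $\kappa(\pi)-\kappa'(\pi)$, of $p$-adic valuation at most $\nu/e$ each; against this one must play the lower bounds of Proposition~\ref{prop-intxi} on the Taylor coefficients of $g^{(\tau)}$ (equivalently, $X_0^{-1}X_i\in\tfrac{p^{N_\tau}}{i}\kappa(\pi)^{p^{N_\tau+1}-i}\operatorname{Mat}(A)$) together with the extra factor of $\mathfrak{m}_{\mathcal O}$ carried by $\rho^{(\tau)}$. The inequality $\sum_{\kappa|_{K_0}=\tau}a_\kappa\le\frac{p^{N_\tau+1}-1}{\nu}+1$ is exactly the threshold at which these denominators are absorbed, so that $X_{\operatorname{trun}}^{(\tau)}$ lands in $\operatorname{Mat}(\mathfrak{m}_{\mathcal O}A[u])$ rather than merely in $p^{-c}\operatorname{Mat}(\mathfrak{m}_{\mathcal O}A[u])$ for some positive $c$; carrying this estimate out, and checking it is the unique place where the hypothesis on the $a_\kappa$ is needed, is the main obstacle of the proof. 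Once this is done, assembling the $X_{\operatorname{trun}}^{(\tau)}$ over $\tau$ produces the required $X_{\operatorname{trun}}$, and the reductions of the first paragraph finish the argument.
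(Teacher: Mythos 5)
Your overall skeleton agrees with the paper's: reduce to the generic fibre using $\mathcal{O}$-flatness of $M_\mu$, normalise $X$ by a constant, control its Taylor coefficients at the points $\kappa(\pi)$ via the integrality of $N_\nabla$, produce an integral matrix with the same truncated expansions, and conclude with Lemma~\ref{lem-truncate}. The difficulty is in your normalisation. You rescale by the single constant $X^{(\tau)}(0)^{-1}$ for each embedding $\tau$ of $K_0$, but Proposition~\ref{prop-intxi} bounds the products $X_{\kappa,0}^{-1}X_{\kappa,i}$, where $X_{\kappa,0}$ is the value of $X$ at $u=\kappa(\pi)$ --- a different constant for each embedding $\kappa$ of $K$ lying over $\tau$, about which you have no integral information. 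After your normalisation the $i$-th Taylor coefficient of $X^{(\tau)}$ at $\kappa(\pi)$ is $X_{\kappa,0}\cdot\bigl(X_{\kappa,0}^{-1}X_{\kappa,i}\bigr)$, so your parenthetical ``equivalently'' --- transferring the bounds of Proposition~\ref{prop-intxi} to the Taylor coefficients of $g^{(\tau)}$ --- is precisely the unproved point: it would require $X_{\kappa,0}\in\operatorname{GL}_d(A)$, which is not known and is exactly what the normalisation is meant to dispose of. The paper instead left-multiplies by the element of $\operatorname{GL}_d(K\otimes_{\mathbb{Z}_p}A)$ whose $\kappa$-component is $X_{\kappa,0}^{-1}$ (a per-$\kappa$, not per-$\tau$, constant); this still preserves $M_\mu[\tfrac1p]$ and the conclusion of Proposition~\ref{prop-intxi}, and makes the Taylor coefficients at each $\kappa(\pi)$ literally the controlled quantities. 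Your observation that $\partial(X)=XN$ forces $X^{(\tau)}\equiv X^{(\tau)}(0)\bmod u^{p^{N_\tau+1}}$ is correct, as is the fact that the remainder of $u^{p^{N_\tau+1}}$ modulo $\Pi^{(\tau)}$ lies in $\mathfrak{m}_{\mathcal O}\mathcal{O}[u]$; but this buys only a single factor of $\mathfrak{m}_{\mathcal O}$ and cannot absorb the a priori unbounded denominators of $g^{(\tau)}$, so it does not substitute for the per-coefficient estimates.

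Second, the decisive estimate --- that the Hermite interpolant lands in $\operatorname{Mat}(\mathfrak{m}_{\mathcal O}A[u])$ once the coefficients satisfy $f_{\kappa,i}\in\kappa(\pi)^{p^{N_\tau+1}-i}A$ for $i<p^{N_\tau+1}$ and $\sum_{\kappa|_{K_0}=\tau}a_\kappa\le\frac{p^{N_\tau+1}-1}{\nu}+1$ --- is left open in your write-up, flagged as ``the main obstacle.'' This is the entire content of Lemma~\ref{lem-integralmatrices}, which the paper states separately and proves by the argument of [B23, 17.12]. Identifying where the difficulty sits is not the same as resolving it: as written, the proposal is a plan whose two load-bearing steps (the correct normalisation and the interpolation estimate) are respectively misstated and missing.
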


\begin{proof}
	Let $\beta$ be a $K_0 \otimes_{\mathbb{Z}_p} A$-basis of $D$ and write $\xi(\varphi^*\iota) =\beta X$. For each $\kappa:K \hookrightarrow \mathcal{O}[\frac{1}{p}]$ write the Taylor expansion of $X$ around $u=\kappa(\pi)$ as $\sum_{i \geq 0} (u-\kappa(\pi))^i X_{\kappa,i}$. Since each $X_{\kappa,0} \in \operatorname{GL}_d(A[\frac{1}{p}])$ we can find $g \in \operatorname{GL}_d(K \otimes_{\mathbb{Z}_p} A) \cong \prod_{\kappa:K \hookrightarrow \mathcal{O}[\frac{1}{p}]} \operatorname{GL}_d(A[\frac{1}{p}])$ so that $X' :=g X$ has Taylor expansion
	$$
	\sum_{i \geq 0}(u-\kappa(\pi))^i X_{\kappa,0}^{-1} X_{\kappa,i}
	$$
	around $u =\kappa(\pi)$. Corollary~\ref{cor-XMmu} asserts that $X \cdot \Psi(\mathfrak{M},\iota) \in M_\mu[\frac{1}{p}])$. Since $M_{\mu}(A[\frac{1}{p}])$ is stable under the action of $ \operatorname{GL}_d(K \otimes_{\mathbb{Z}_p} A)$ it follows that $X' \cdot \Psi(\mathfrak{M},\iota) \in M_\mu[\frac{1}{p}]$ also. 
		
	Proposition~\ref{prop-intxi} implies that the Taylor expansions $\sum_{i \geq 0} (u-\kappa(\pi))^i X'_i$ are such that $X'_i \in \frac{p^{N_\tau}}{i}\kappa(\pi)^{p^{N_\tau+1}-i} A$ for $0 \leq i < p^{N_\tau+1}$. In particular, $X'_i \in \kappa(\pi)^{p^{N_\tau+1}-i}A$ for $0 \leq i < p^{N_\tau+1}$. Therefore, applying Lemma~\ref{lem-integralmatrices} below to the entries of $X'-1$ produces $X_{\operatorname{trun}} \in \operatorname{Mat}(\mathfrak{m}_{\mathcal{O}}\mathfrak{S}_A)$ so that $1+X_{\operatorname{trun}}$ has Taylor expansion around $u =\kappa(\pi)$ equal that of $X'$ modulo $\prod (u-\kappa(\pi))^{a_{\kappa}}$ for each $\kappa$. By Lemma~\ref{lem-truncate} this means $(1+X_{\operatorname{trun}})(X')^{-1}$ acts trivially on $M_\mu[\frac{1}{p}]$ and so $(1+X_{\operatorname{trun}}) \Psi(\mathfrak{M},\iota)[\frac{1}{p}] \in M_\mu(A[\frac{1}{p}])$. Since $1+X_{\operatorname{trun}} \in \operatorname{GL}_d(\mathfrak{S}_A)$ and $M_\mu$ is $\mathcal{O}$-flat by definition it follows that $(1+X_{\operatorname{trun}}) \Psi(\mathfrak{M},\iota) \in M_\mu(A)$ as desired.
\end{proof}

\begin{lemma}\label{lem-integralmatrices}
		Suppose that $f \in \widehat{\mathfrak{S}}_{A[\frac{1}{p}]}$ has Taylor expansions
		$$
		\sum f_{\kappa,i}(u-\kappa(\pi))^i
		$$
		around $u =\kappa(\pi)$ with $f_{\kappa,i} \in \kappa(\pi)^{p^{N_\tau+1}-n} A$ for $0 \leq i < p^{N_\tau+1}$. Suppose also that $a_\kappa \geq 0$ satisfy
		$$
		\sum_{\kappa|_{K_0} =\tau} a_{\kappa} \leq \frac{p^{N_\tau+1} -1}{\nu}+1
		$$
		for $\nu$ as in Proposition~\ref{prop-specialfibre}. Then there exists $f_{\operatorname{trun}} \in \mathfrak{m}_{\mathcal{O}}\mathfrak{S}_A$ with Taylor expansion at $u=\kappa(\pi)$ agreeing with that of $f$ modulo $(u-\kappa(\pi))^{a_\kappa}$ for each $\kappa$.
\end{lemma}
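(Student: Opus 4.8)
The plan is to reduce to the case of a single fibre $\tau$ by working $\tau$-component-wise under the decomposition $\widehat{\mathfrak{S}}_{A[\frac{1}{p}]} \cong \prod_\kappa \widehat{\mathfrak{S}}_{A[\frac{1}{p}],\kappa}$, and to construct $f_{\operatorname{trun}}$ as an explicit polynomial in $u$ with $W(k)$-structure fixing the correct $\tau$-component. Fix $\tau$, and let $\kappa_1,\ldots,\kappa_r$ be the embeddings restricting to $\tau$ (so $r = e$). The target Taylor data at $u = \kappa_j(\pi)$ is a jet of order $a_{\kappa_j}$, i.e. $\sum_{j} a_{\kappa_j}$ conditions in total; by hypothesis this sum is at most $\frac{p^{N_\tau+1}-1}{\nu}+1$. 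The idea is to realise this jet by a polynomial $P(u) \in (W(k)\otimes_{\mathbb{Z}_p} A)[u]$ of controlled degree, and then check that $P$ has $\mathfrak{m}_{\mathcal{O}}$-coefficients using the divisibility $f_{\kappa,i} \in \kappa(\pi)^{p^{N_\tau+1}-i}A$ together with the fact that $\kappa(\pi)$ and its conjugates generate an ideal in $\mathcal{O}$ that is a power of $\mathfrak{m}_{\mathcal{O}}$ whose exponent is governed by $\nu$.

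Concretely, first I would note $\prod_j (u - \kappa_j(\pi)) = \tau(E(u))$, and $v_\pi(\kappa_j(\pi) - \kappa_{j'}(\pi)) \leq \nu$ for $j \neq j'$. Then I would build $f_{\operatorname{trun}}$ by Hermite-type interpolation: seek $P(u) = \sum_{i < p^{N_\tau+1}} c_i u^i$ (a polynomial of degree $< p^{N_\tau+1}$, so it lies in $\mathfrak{S}_A$ on the nose) whose jet of order $a_{\kappa_j}$ at $u = \kappa_j(\pi)$ matches the prescribed one. Since the total number of interpolation conditions $\sum_j a_{\kappa_j}$ is bounded by $\frac{p^{N_\tau+1}-1}{\nu}+1 \leq p^{N_\tau+1}$ such a $P$ exists and is unique once we pin down the remaining coefficients; the real work is the \emph{valuation bound} on the $c_i$. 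The cleanest route is to expand $P$ instead in the ``Newton'' basis adapted to the points with multiplicities, i.e. write $P = \sum_{m} d_m \, \omega_m(u)$ where $\omega_m$ runs over products $\prod (u - \kappa_{j}(\pi))^{e_j}$ with $\sum_j e_j = m$ ordered so that successive jets are cut out; then $d_m$ is a divided-difference-type combination of the $f_{\kappa_j,i}$ divided by differences $\kappa_j(\pi) - \kappa_{j'}(\pi)$. Each such $d_m$ picks up at most $m-1 \le \sum a_\kappa - 1 \le \frac{p^{N_\tau+1}-1}{\nu}$ denominators of $\pi$-valuation $\le \nu$, hence $v_\pi$-denominator at most $p^{N_\tau+1}-1$; against this the numerators $f_{\kappa_j,i}$ carry $\kappa(\pi)$-valuation $\ge p^{N_\tau+1} - i \ge 1$ (the worst $i$ being the largest one occurring, $i = a_{\kappa_j}-1 < p^{N_\tau+1}$), so each $d_m$ — and therefore, after re-expanding $\omega_m$ in powers of $u$, each $c_i$ — lies in the maximal ideal $\mathfrak{m}_{\mathcal{O}}$. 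Finally I would fix the $W(k)$-component: multiply by the idempotent cutting out the $\tau$-factor, which is integral since $W(k)/\mathbb{Z}_p$ is unramified, and sum over $\tau$; this produces the desired global $f_{\operatorname{trun}} \in \mathfrak{m}_{\mathcal{O}}\mathfrak{S}_A$.

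The main obstacle is the bookkeeping in the second step: making the valuation estimate on the divided differences tight enough that the inequality $\sum_{\kappa|_{K_0}=\tau} a_\kappa \le \frac{p^{N_\tau+1}-1}{\nu}+1$ is exactly what is needed. One must track carefully that the numerator supplies valuation $\ge p^{N_\tau+1} - i$ where $i$ ranges only up to $a_{\kappa_j}-1$ (so the ``$+1$'' in the bound is the slack that keeps the numerator valuation strictly positive), while the denominators, of which there are one fewer than the number of conditions, contribute at most $\nu$ each — so $\nu \cdot (\sum a_\kappa - 1) \le p^{N_\tau+1}-1$ is precisely the break-even point. A secondary subtlety is that $A$ may have nilpotents or be non-reduced, so ``$v_\pi$'' must be interpreted via the ideal $(\kappa(\pi)) \subset \mathcal{O}$ acting on the flat $\mathcal{O}$-module $A$ rather than as a literal valuation; since $A$ is $\mathcal{O}$-flat this causes no harm, but it should be phrased in terms of divisibility of elements of $A$ by powers of a uniformiser of $\mathcal{O}$ throughout.
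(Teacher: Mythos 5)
Your overall strategy --- reduce to a single $\tau$ using the integral idempotents of $W(k)\otimes_{\mathbb{Z}_p}\mathcal{O}$, solve the Hermite interpolation problem at the nodes $\kappa(\pi)$ in the Newton basis, and bound the resulting divided differences $\pi$-adically --- is the right one, and is surely the content of the reference \cite[17.12]{B23} to which the paper defers. However, the valuation bookkeeping as you state it does not close. You bound the denominator of \emph{every} term of $d_m$ by $\nu\bigl(\sum_\kappa a_\kappa-1\bigr)\le p^{N_\tau+1}-1$ and the numerator $f_{\kappa_j,i}$ by $v_\pi\ge p^{N_\tau+1}-i\ge 1$; subtracting, a term involving $f_{\kappa_j,i}$ with $i\ge 1$ is only guaranteed valuation $\ge 1-i\le 0$, so integrality of $d_m$ is established only when every $a_\kappa\le 1$. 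Concretely, already $e=2$, $\nu=1$, $N_\tau=1$ and $a_{\kappa_1}=a_{\kappa_2}$ of size roughly $p^2/2$ defeats the estimate as written, and such weights are squarely within the intended range of the lemma.

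The missing ingredient is the coupling between the derivative order $i$ in the numerator and the number of denominator factors it meets. Writing the confluent divided difference as $d_m=\sum_j\operatorname{Res}_{z=\kappa_j(\pi)}\,f(z)\prod_k(z-\kappa_k(\pi))^{-e_k}$ with $\sum_k e_k=m+1$, the term containing $f_{\kappa_j,i}$ is multiplied by the coefficient of $(z-\kappa_j(\pi))^{e_j-1-i}$ in $\prod_{k\ne j}(z-\kappa_k(\pi))^{-e_k}$, whose $\pi$-valuation is $\ge-\nu(m-i)$, not merely $\ge-\nu m$: each unit increase in $i$ removes one denominator factor. Hence each term has valuation at least
$$
(p^{N_\tau+1}-i)-\nu(m-i)\;\ge\;p^{N_\tau+1}-\nu(S-1)+(\nu-1)i\;\ge\;1,
$$
where $S=\sum_{\kappa|_{K_0}=\tau}a_\kappa$, using $\nu(S-1)\le p^{N_\tau+1}-1$ and $\nu\ge 1$. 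With this correction your argument goes through, and the ``$+1$'' in the hypothesis is indeed exactly the break-even slack you identify. The remaining steps --- existence and uniqueness of the interpolant of degree $<S$ at distinct nodes, integrality of the Newton polynomials $\omega_m$, the remark that divisibility in the possibly non-reduced $A$ should be read via $\mathcal{O}$-flatness, and the idempotent glueing over $\tau$ --- are all fine.
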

\begin{proof}
	The argument is identical to that in \cite[17.12]{B23}.
\end{proof}

If we consider the closed subfunctor $\widetilde{Z}_d^{(N_\tau)}$ of $\widetilde{Z}_d$ consisting of $(\mathfrak{M},\iota)$ with $\iota$ an $(N_\tau)$-adapted basis then Proposition~\ref{prop-specialfibre} implies the following:

\begin{corollary}\label{cor-factor}
	Let $\mu$ be a Hodge type and assume there are $N_\tau \geq 0$ are such 
	\begin{itemize}
		\item the grading on $D_{K,\kappa}$ is concentrated in degree $0$ whenever $\kappa|_{K_0} = \tau \circ \varphi^n$ for $1\leq n \leq N_\tau$
		\item the grading on $D_{K,\kappa}$ is concentrated in degrees $[-a_\kappa,0]$ whenever $\kappa|_{K_0} = \tau$ for $a_\kappa$ satisfying
		$$
		\sum_{\kappa|_{K_0} = \tau} a_\kappa \leq \frac{p^{N_\tau+1}-1}{\nu}+1,\qquad \nu = \operatorname{max} v_\pi(\pi-\pi')
		$$
		where $v_\pi$ denotes the $\pi$-adic valuation and the maximum runs over distinct conjugates $\pi'$ of $\pi$ in $\mathcal{O}_K$
	\end{itemize}
Then the composite $(Y^\mu \otimes_{\mathcal{O}} \mathbb{F}) \times_{Z_d} \widetilde{Z}^{(N_\tau)} \rightarrow \widetilde{Z}^{(N_\tau)} \xrightarrow{\Psi} \operatorname{Gr}$
factors through $M_\mu \otimes_{\mathcal{O}} \mathbb{F}$.
\end{corollary}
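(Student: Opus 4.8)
The plan is to derive this directly from Proposition~\ref{prop-specialfibre}, the only real work being to pass from its finite flat test algebras to the general test objects of a special fibre. Write $\mathcal{Y} := Y^\mu_d \times_{Z_d} \widetilde{Z}^{(N_\tau)}$, so that the composite in the statement is a morphism $\Psi \colon \mathcal{Y} \otimes_{\mathcal{O}} \mathbb{F} \to \operatorname{Gr} \otimes_{\mathcal{O}} \mathbb{F}$, and note that $M_\mu \otimes_{\mathcal{O}} \mathbb{F}$ is a closed subscheme of $\operatorname{Gr} \otimes_{\mathcal{O}} \mathbb{F}$, being the base change of the closed immersion $M_\mu \hookrightarrow \operatorname{Gr}$. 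Thus the assertion is that the scheme-theoretic preimage of $M_\mu \otimes_{\mathcal{O}} \mathbb{F}$ under $\Psi$ is all of $\mathcal{Y} \otimes_{\mathcal{O}} \mathbb{F}$. Since the forgetful map $\mathcal{Y} \to Y^\mu_d$ is (essentially) a torsor under the smooth closed subgroup of $L^+\operatorname{GL}_d$ preserving the $(N_\tau)$-adapted condition for the $\cdot_\varphi$-action, $\mathcal{Y}$ is $\mathcal{O}$-flat and topologically of finite type over $\operatorname{Spf}\mathcal{O}$; consequently it suffices to check the factoring after composing with the reduction modulo $\mathfrak{m}_{\mathcal{O}}$ of every $\operatorname{Spec} A \to \mathcal{Y}$ with $A$ finite flat over $\mathcal{O}$, because such reductions form a schematically dense family in $\mathcal{Y} \otimes_{\mathcal{O}} \mathbb{F}$.

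So fix a finite flat $\mathcal{O}$-algebra $A$ and a point of $\mathcal{Y}(A)$. By Lemma~\ref{lem-adaptedbasis}(2) — whose hypothesis on $\mu$ is contained in ours — this is a pair $(\mathfrak{M}, \iota)$ with $\mathfrak{M} \in Y^\mu_d(A)$ and $\iota$ an $(N_\tau)$-adapted basis. Because $\mathfrak{M} \in Y^\mu_d(A)$, its attached filtered $\varphi$-module $D$ has filtration of type $\mu$, so the hypotheses on the gradings of the $\mu_\kappa$ in the statement are exactly the hypotheses on the gradings of the $D_{K,\kappa}$ required in Proposition~\ref{prop-specialfibre}. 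That proposition yields $X_{\operatorname{trun}} \in \operatorname{Mat}(\mathfrak{m}_{\mathcal{O}}\mathfrak{S}_A)$ with $(1 + X_{\operatorname{trun}}) \cdot \Psi(\mathfrak{M}, \iota) \in M_\mu(A)$. Since the entries of $X_{\operatorname{trun}}$ lie in $\mathfrak{m}_{\mathcal{O}}\mathfrak{S}_A$, the element $1 + X_{\operatorname{trun}}$ reduces to the identity of $\operatorname{GL}_d(\mathfrak{S}_{A/\mathfrak{m}_{\mathcal{O}}A})$; as the $L^+\operatorname{GL}_d$-action on $\operatorname{Gr}$ is compatible with base change, reducing the displayed membership modulo $\mathfrak{m}_{\mathcal{O}}$ gives $\Psi(\mathfrak{M}, \iota) \bmod \mathfrak{m}_{\mathcal{O}} \in M_\mu(A/\mathfrak{m}_{\mathcal{O}}A)$, i.e. the corresponding $(A/\mathfrak{m}_{\mathcal{O}}A)$-point of $\mathcal{Y} \otimes_{\mathcal{O}} \mathbb{F}$ factors through $M_\mu \otimes_{\mathcal{O}} \mathbb{F}$, as wanted.

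The step I expect to require the most care is this reduction to finite flat test algebras: one must know that a closed subscheme of the special fibre of an $\mathcal{O}$-flat formal scheme topologically of finite type over $\operatorname{Spf}\mathcal{O}$ which contains the mod-$\mathfrak{m}_{\mathcal{O}}$ reductions of all its finite flat $\mathcal{O}$-points is already the whole special fibre — and, relatedly, that $\mathcal{Y} \to Y^\mu_d$ really is surjective (so that $\mathcal{Y}$ is genuinely $\mathcal{O}$-flat), which rests again on Lemma~\ref{lem-adaptedbasis}(2) together with the $\mathcal{O}$-flatness of $Y^\mu_d$ and density of its finite flat points. Everything else is bookkeeping — the identification of the adapted-basis stabiliser subgroup and the compatibilities in the second paragraph — so, granting this density input, the Corollary follows from Proposition~\ref{prop-specialfibre} and Lemma~\ref{lem-adaptedbasis}.
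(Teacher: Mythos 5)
Your proof is correct and follows essentially the same route as the paper: reduce to finite flat $\mathcal{O}$-algebra test points, apply Lemma~\ref{lem-adaptedbasis}(2) and Proposition~\ref{prop-specialfibre}, and reduce modulo $\mathfrak{m}_{\mathcal{O}}$ using that $X_{\operatorname{trun}}\equiv 0$ there. The density input you rightly flag as the delicate step is handled in the paper by splitting it in two --- first reducing to $\overline{A}$-valued points for $\overline{A}$ a finite local $\mathbb{F}$-algebra (citing \cite{B21}) and then lifting such points, together with their adapted bases, to finite flat $\mathcal{O}$-algebras (citing \cite{B23}) --- which is exactly the content of your ``schematically dense family'' claim.
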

\begin{proof}
	Using \cite{B21} it suffices to prove the factorisation on $\overline{A}$-valued points for $\overline{A}$ a finite local $\mathbb{F}$-algebra. Let $(\overline{M},\overline{\iota})$ correspond to an $\overline{A}$-valued point of $(Y^\mu \otimes_{\mathcal{O}} \mathbb{F}) \times_{Z_d} \widetilde{Z}^{(N_\tau)}$. By \cite{B23} we can find a finite flat $\mathcal{O}$-algebra $A$ and $\mathfrak{M} \in Y^\mu_d(A)$ so that $A \otimes_{\mathcal{O}} \mathbb{F} = A$ and $\mathfrak{M} \otimes_{\mathcal{O}} \mathbb{F} =\overline{\mathfrak{M}}$. It is easy to see that $\overline{\iota}$ can also be lifted to an $(N_\tau)$-adapted basis $\iota$ of $\mathfrak{M}$. Proposition~\ref{prop-specialfibre} tells us that $(1+X_{\operatorname{trun}}) \Psi(\mathfrak{M},\iota) \in M_\mu(A)$ and, since $X_{\operatorname{trun}} \equiv 0$ modulo $\mathfrak{m}_{\mathcal{O}}$ it follows that, $\Psi(\overline{\mathfrak{M}},\overline{\iota}) = \Psi(\mathfrak{M},\iota) \otimes_{\mathcal{O}} \mathbb{F} \in M_\mu(\overline{A})$ as required.
\end{proof}

\section{Construction}

Here we prove part (1) from Theorem~\ref{thm-main}.  Notice that $\widetilde{Z}_d^{(N_\tau)}$ defined before Corollary~\ref{cor-factor} is closed in $\widetilde{Z}_d$ and is stabilised by two subgroups of $L^+\operatorname{GL}_d$ under the two actions from \eqref{eq-twoactions}. The stabiliser under the action $\cdot_{\operatorname{trans}}$ is the subgroup $L^+\operatorname{GL}_d^{(N_\tau),\operatorname{trans}}$ consisting of $g$ with $\tau \circ \varphi^n$-th part $g_{\tau \circ \varphi^n} = 1$ for $1\leq n \leq N_\tau$. The stabiliser under the action $\cdot _{\varphi}$ is the subgroup $L^+\operatorname{GL}_d^{(N_\tau),\varphi}$ consisting of $g \in L^+\operatorname{GL}_d$ with $g_{\tau \circ \varphi^n} = \varphi(g_{\tau \circ \varphi^{n+1}})$ whenever $1 \leq n \leq N_\tau$. The diagram from Section~\ref{sec-affinegrass} then restricts to a diagram
$$
\begin{tikzcd}
	& \ar[dl,"\Gamma"] \widetilde{Z}_d^{(N_\tau)} \ar[dr,"\Psi"] & \\
	Z_d & & \operatorname{Gr}
\end{tikzcd}
$$ 
in which $\Gamma$ and $\Psi$ are respectively torsors for the actions of $L^+\operatorname{GL}_d^{(N_\tau),\varphi}$ and $L^+\operatorname{GL}_d^{(N_\tau),\operatorname{trans}}$.
\begin{proposition}\label{prop-stability}
	Let be $\mu$ a Hodge type for which there are tuples of non-negative integers $(N_\tau)_{\tau:K_0\hookrightarrow\mathcal{O}[\frac{1}{p}]}$ such that the grading on the $\kappa$-th part of $\mu$:
	\begin{itemize}
		\item  is concentrated in degree $0$ whenever $\kappa|_{K_0} = \tau \circ \varphi^n$ with $1 \leq n \leq N_\tau$
		\item	is concentrated in degree $[-a_\kappa,0]$ whenever $\kappa|_{K_0} = \tau$ for $a_\kappa$ satisfying
		$$
		\sum_{\kappa|_{K_0} = \tau} a_\kappa \leq p^{N_\tau+1}
		$$
	\end{itemize} Then $\widetilde{Z}^{(N_\tau)}_d \times_{\operatorname{Gr}} (M_\mu \otimes_{\mathcal{O}} \mathbb{F})$ is stable under the action of $L^+\operatorname{GL}_d^{(N_\tau),\varphi}$, and so descends to a closed subfunctor $Z^\mu_{d,\mathbb{F}}$ of $Z_d$ characterised by the property from (1) of Theorem~\ref{thm-main}.
\end{proposition}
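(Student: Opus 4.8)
The plan is to use that $\Gamma$ is a $\cdot_\varphi$-torsor under $L^+\operatorname{GL}_d^{(N_\tau),\varphi}$, so that a $\cdot_\varphi$-stable closed subfunctor of $\widetilde{Z}^{(N_\tau)}_d$ descends along $\Gamma$ to a closed substack of $Z_d\otimes_{\mathcal{O}}\mathbb{F}$; hence it is enough to prove that $\widetilde{Z}^{(N_\tau)}_d\times_{\operatorname{Gr}}(M_\mu\otimes_{\mathcal{O}}\mathbb{F}) = \Psi^{-1}(M_\mu\otimes_{\mathcal{O}}\mathbb{F})$ is $\cdot_\varphi$-stable. (It is closed since $M_\mu$, hence $M_\mu\otimes_{\mathcal{O}}\mathbb{F}$, is closed in $\operatorname{Gr}$.) First I would unwind the action via the identification $\widetilde{Z}_d\cong L\operatorname{GL}_d$, $(\mathfrak{M},\iota)\mapsto C$ with $\varphi(\iota)=\iota C$: a short computation using $g\cdot_\varphi C = gC\varphi(g^{-1})$ shows that $\Psi$ carries $g\cdot_\varphi(\mathfrak{M},\iota)$ to the left translate of $\Psi(\mathfrak{M},\iota)$ by $\varphi(g)\in L^+\operatorname{GL}_d$ (the $g^{-1}$ appearing on inverting being absorbed into the point stabiliser). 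Thus everything reduces to showing that left translation by $\varphi(g)$ preserves $M_\mu\otimes_{\mathcal{O}}\mathbb{F}$ for every $g\in L^+\operatorname{GL}_d^{(N_\tau),\varphi}$.

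I would argue this one embedding at a time. Decompose $\operatorname{Gr}=\prod_\sigma\operatorname{Gr}_\sigma$ and $M_\mu=\prod_\sigma M_{\mu_\sigma}$ over $\sigma:K_0\hookrightarrow\mathcal{O}[\tfrac1p]$, where $M_{\mu_\sigma}$ is the closure of $\operatorname{FL}_{\mu_\sigma}$ inside the affine grassmannian $\operatorname{Gr}_\sigma$ for the $\sigma$-component of $E(u)$, and note $\varphi(g)_\sigma = g_{\sigma\circ\varphi}(u^p)$. Fix $\sigma$ and put $a_\sigma:=\sum_{\kappa|_{K_0}=\sigma}a_\kappa\le p^{N_\sigma+1}$. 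The crucial step is to iterate the relations $g_{\sigma\circ\varphi^n}(u)=g_{\sigma\circ\varphi^{n+1}}(u^p)$, valid for $1\le n\le N_\sigma$ since $g\in L^+\operatorname{GL}_d^{(N_\tau),\varphi}$, which telescope to $g_{\sigma\circ\varphi}(u)=g_{\sigma\circ\varphi^{N_\sigma+1}}(u^{p^{N_\sigma}})$ and hence
$$
\varphi(g)_\sigma \;=\; g_{\sigma\circ\varphi}(u^p) \;=\; g_{\sigma\circ\varphi^{N_\sigma+1}}\bigl(u^{p^{N_\sigma+1}}\bigr),
$$
a power series in $u^{p^{N_\sigma+1}}$. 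In particular $\varphi(g)_\sigma$ is congruent modulo $u^{p^{N_\sigma+1}}$ to the constant matrix $c_\sigma:=g_{\sigma\circ\varphi^{N_\sigma+1}}(0)\in\operatorname{GL}_d(A)$.

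It then remains to see that such an element preserves $M_{\mu_\sigma}\otimes_{\mathcal{O}}\mathbb{F}$ under left translation. Write $\varphi(g)_\sigma=c_\sigma(1+u^{p^{N_\sigma+1}}r_\sigma)$ with $r_\sigma\in\operatorname{Mat}(A[[u]])$. Left translation by the constant $c_\sigma$ sends a filtration of type $\mu_\sigma$ to $c_\sigma$ times it, of the same type, so it preserves $\operatorname{FL}_{\mu_\sigma}$ and its closure $M_{\mu_\sigma}\otimes_{\mathcal{O}}\mathbb{F}$. For the remaining factor, one uses that every point of $M_{\mu_\sigma}\otimes_{\mathcal{O}}\mathbb{F}$ is a lattice $L$ with $u^{a_\sigma}L_0\subseteq L\subseteq L_0$ for $L_0$ the standard lattice: this is a closed condition on $\operatorname{Gr}_\sigma$ holding on $M_{\mu_\sigma}[\tfrac1p]=\operatorname{FL}_{\mu_\sigma}$ (where $\prod_{\kappa|_{K_0}=\sigma}(u-\kappa(\pi))^{a_\kappa}$ reduces to $u^{a_\sigma}$ modulo $\mathfrak{m}_{\mathcal{O}}$), and $M_{\mu_\sigma}$ is $\mathcal{O}$-flat; since $p^{N_\sigma+1}\ge a_\sigma$ it follows that $(1+u^{p^{N_\sigma+1}}r_\sigma)L=L$, exactly as in the mod-$p$ incarnation of Lemma~\ref{lem-truncate}. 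Combining over all $\sigma$ gives the $\cdot_\varphi$-stability, and with it the closed substack $Z^\mu_{d,\mathbb{F}}$. That $Z^\mu_{d,\mathbb{F}}$ is cut out by the property in Theorem~\ref{thm-main}(1) — membership of an $\mathfrak{S}_A$-free $\mathfrak{M}$ being detected by the existence of \emph{some}, not a priori $(N_\tau)$-adapted, basis $\iota$ with $\Psi(\mathfrak{M},\iota)\in M_\mu(A)$ — then follows by combining this stability with Lemma~\ref{lem-adaptedbasis}, which replaces any such basis by an $(N_\tau)$-adapted one.

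The step I expect to be the crux is the combination of the last two paragraphs: extracting from the $\varphi$-semilinear relations on $g$ the divisibility $\varphi(g)_\sigma\equiv c_\sigma\pmod{u^{p^{N_\sigma+1}}}$ and playing it against the constraint $a_\sigma\le p^{N_\sigma+1}$. When all $N_\tau=0$ one recovers only congruence modulo $u^p$, which is precisely the classical restriction to weights in $[-p,0]$; it is the extra relations packaged into $L^+\operatorname{GL}_d^{(N_\tau),\varphi}$ — available because $\mu$ is concentrated in degree $0$ at the embeddings $\tau\circ\varphi,\ldots,\tau\circ\varphi^{N_\tau}$, which is also what makes these bases exist — that allow the bound to be pushed up to $p^{N_\tau+1}$.
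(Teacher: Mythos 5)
Your argument is correct and follows the paper's proof essentially step for step: reduce via $\Psi(g\cdot_\varphi(\mathfrak{M},\iota)) = \varphi(g)\cdot\Psi(\mathfrak{M},\iota)$ to stability of $M_\mu\otimes_{\mathcal{O}}\mathbb{F}$ under $\varphi(L^+\operatorname{GL}_d^{(N_\tau),\varphi})$, telescope the relations defining that group to see that $\varphi(g)_\tau$ is a power series in $u^{p^{N_\tau+1}}$ and hence congruent to a constant modulo $u^{p^{N_\tau+1}}$, absorb the constant into the evident $\operatorname{GL}_d(W(k)\otimes_{\mathbb{Z}_p}A)$-stability of $M_\mu$, and kill the remaining unipotent factor using $\sum_{\kappa}a_\kappa\le p^{N_\tau+1}$ together with the generic-fibre computation of Lemma~\ref{lem-truncate} and the fact that $M_\mu$ is the flat closure of $\operatorname{FL}_\mu$. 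Your only (cosmetic) deviation is to transport the generic-fibre information to the special fibre via the closed condition $u^{a_\sigma}L_0\subseteq L\subseteq L_0$ rather than by descending the statement that $X$ acts trivially on $M_\mu$ itself, and you are in fact slightly more explicit than the paper about why membership is detected by arbitrary rather than $(N_\tau)$-adapted bases.
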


Notice that the assumptions on $\mu$ here are weaker then those imposed in Theorem~\ref{thm-main} since $p^{N_\tau+1} \geq \frac{p^{N_\tau+1}-1}{\nu} +1$ for $\nu \geq 1$.
\begin{proof}
	If $(\mathfrak{M},\iota) \in \widetilde{Z}_d$ then $ \Psi(g \cdot_{\varphi}(\mathfrak{M},\iota)) = \varphi(g) \cdot \Psi(\mathfrak{M},\iota)$. So we have to show that $M_\mu \otimes_{\mathcal{O}} \mathbb{F}$ is stable under the actions of $\varphi(L^+\operatorname{GL}_d^{(N_\tau),\varphi})$. The $\tau\circ \varphi^n$-th part of any element of $\varphi(L^+\operatorname{GL}_d^{(N_\tau),\varphi})$ is $\equiv g_{0,\tau \circ \varphi^n}$ modulo $u^{p^{N_\tau+1-n}}$ for $g_{0,\tau \circ \varphi^n} \in \operatorname{GL}_d(A)$ and $1\leq n \leq N_\tau$. Since $M_\mu(A)$ is evidently stable under $\operatorname{GL}_d(W(k) \otimes_{\mathbb{Z}_p} A)$ the proposition follows from the assertion that any $X \in L^+\operatorname{GL}_d$ with $\tau$-th part $\equiv 1$ modulo $\prod_{\kappa|_k =\tau} (u-\kappa(\pi))^{a_\kappa}$ acts trivially on $M_\mu$ when the grading on the $\kappa$-th part of $\mu$ is concentrated in degrees $[-a_{\kappa},0]$. This can be checked on the generic fibre where the assertion is just that of Lemma~\ref{lem-truncate}.
\end{proof}

Notice there is an obvious isomorphism between $L^+\operatorname{GL}_d^{(N_\tau),\varphi}$ and $L^+\operatorname{GL}_d^{(N_\tau),\operatorname{trans}}$. If we assumed that each of these group schemes was of finite type over $\mathcal{O}$ (which they are not) then it would follow that 
$$
\begin{aligned}
	 \operatorname{dim} Z^\mu_{d,\mathbb{F}} &= \operatorname{dim}\widetilde{Z}_d^{(N_\tau)} \times_{\operatorname{Gr}} (M_\mu \otimes_{\mathcal{O}} \mathbb{F}) + \operatorname{dim}L^+\operatorname{GL}_d^{(N_\tau),\varphi} \\ &= \operatorname{dim} \widetilde{Z}_d^{(N_\tau)} \times_{\operatorname{Gr}} (M_\mu \otimes_{\mathcal{O}} \mathbb{F}) + \operatorname{dim}L^+\operatorname{GL}_d^{(N_\tau),\operatorname{trans}}\\& = \operatorname{dim} M_\mu \otimes_{\mathcal{O}} \mathbb{F}
 \end{aligned}
 $$
 To make this argument work we instead descend the above diagram to ``finite level'' by considering quotients of $\widetilde{Z}_d$:
 
\begin{corollary}
	$\operatorname{dim} Z^\mu_{d,\mathbb{F}} = \operatorname{dim} M_\mu \otimes_{\mathcal{O}} \mathbb{F}$.
\end{corollary}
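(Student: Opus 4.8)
The plan is to make the dimension count preceding the corollary rigorous by replacing the two pro-algebraic group schemes by finite-type quotients coming from a single, sufficiently deep congruence subgroup. Write $\widetilde{W} := \widetilde{Z}_d^{(N_\tau)} \times_{\operatorname{Gr}} (M_\mu \otimes_{\mathcal{O}} \mathbb{F})$. By Proposition~\ref{prop-stability} the substack $\widetilde{W}$ is stable under $\cdot_\varphi$ and descends along $\Gamma$ to $Z^\mu_{d,\mathbb{F}}$, so $\Gamma$ restricts to an $L^+\operatorname{GL}_d^{(N_\tau),\varphi}$-torsor $\widetilde{W} \to Z^\mu_{d,\mathbb{F}}$, while $\Psi$ restricts to an $L^+\operatorname{GL}_d^{(N_\tau),\operatorname{trans}}$-torsor $\widetilde{W} \to M_\mu \otimes_{\mathcal{O}} \mathbb{F}$. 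In the coordinates $\widetilde{Z}_d \cong L\operatorname{GL}_d$ the first action is $g\cdot_\varphi C = gC\varphi(g)^{-1}$ and the second is $g\cdot_{\operatorname{trans}} C = gC$. Were the two structure groups of finite type, the formal calculation preceding the corollary, together with their obvious isomorphism, would give the claim at once; the job is to carry this out at finite level.

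First I would fix the truncation level. Since $M_\mu$ and $\widetilde{W}$ are quasi-compact, the Frobenius matrices $C$ of $(N_\tau)$-adapted bases of points of $\widetilde{W}$ form a bounded family: their poles along $E(u)$ are uniformly bounded by some $P$, and $C_{\tau\circ\varphi^n} = 1$ for $1 \le n \le N_\tau$ by the very definition of an $(N_\tau)$-adapted basis. Let $\mathcal{K}_m \trianglelefteq L^+\operatorname{GL}_d$ be the congruence subgroup of elements $\equiv 1$ modulo $u^m$; for $m \gg 0$ it acts trivially on $M_\mu \otimes_{\mathcal{O}} \mathbb{F}$ via the tautological action on $\operatorname{Gr}$ (boundedness), and I also take $m$ large enough for the convergence estimates below. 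Put $\mathcal{K}_m^\varphi := \mathcal{K}_m \cap L^+\operatorname{GL}_d^{(N_\tau),\varphi}$ and $\mathcal{K}_m^{\operatorname{trans}} := \mathcal{K}_m \cap L^+\operatorname{GL}_d^{(N_\tau),\operatorname{trans}}$; these are normal pro-unipotent subgroups, and because the obvious isomorphism $L^+\operatorname{GL}_d^{(N_\tau),\varphi} \cong L^+\operatorname{GL}_d^{(N_\tau),\operatorname{trans}}$ identifies the \emph{free} components of the two groups, on which both subgroups are cut out by $\equiv 1 \bmod u^m$, it identifies $\mathcal{K}_m^\varphi$ with $\mathcal{K}_m^{\operatorname{trans}}$. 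Hence the finite-type quotients $L^+\operatorname{GL}_d^{(N_\tau),\varphi}/\mathcal{K}_m^\varphi$ and $L^+\operatorname{GL}_d^{(N_\tau),\operatorname{trans}}/\mathcal{K}_m^{\operatorname{trans}}$ have a common dimension $D_m$.

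Next I would form $\overline{W} := \widetilde{W}/(\cdot_\varphi\,\mathcal{K}_m^\varphi)$. As $\mathcal{K}_m^\varphi$ is normal in $L^+\operatorname{GL}_d^{(N_\tau),\varphi}$, the map $\overline{W} \to Z^\mu_{d,\mathbb{F}}$ is a torsor for the finite-type group $L^+\operatorname{GL}_d^{(N_\tau),\varphi}/\mathcal{K}_m^\varphi$. On the other hand, since $\varphi(\mathcal{K}_m) \subseteq \mathcal{K}_m$ acts trivially on $M_\mu \otimes_{\mathcal{O}} \mathbb{F}$ and $\Psi(g\cdot_\varphi w) = \varphi(g)\cdot\Psi(w)$, the map $\Psi|_{\widetilde{W}}$ is constant on $\cdot_\varphi\,\mathcal{K}_m^\varphi$-orbits, so it descends to $\overline{W} \to M_\mu \otimes_{\mathcal{O}} \mathbb{F}$, carrying the residual commuting $\cdot_{\operatorname{trans}}$-action of $L^+\operatorname{GL}_d^{(N_\tau),\operatorname{trans}}$ (transitively on fibres). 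The heart of the matter is to show that, for every $(N_\tau)$-adapted Frobenius matrix $C$ of a point of $\widetilde{W}$, the twisted-conjugation map $g \mapsto gC\varphi(g)^{-1}C^{-1}$ restricts to an isomorphism of schemes $\mathcal{K}_m^\varphi \xrightarrow{\ \sim\ } \mathcal{K}_m^{\operatorname{trans}}$: that its values lie in $\mathcal{K}_m^{\operatorname{trans}}$ follows from the pole bound $P$ and $g \equiv 1$ modulo a high power of $u$ (integrality), together with $C_{\tau\circ\varphi^n}=1$ and the relation $\varphi(g_{\tau\circ\varphi^{n+1}}) = g_{\tau\circ\varphi^n}$ defining $L^+\operatorname{GL}_d^{(N_\tau),\varphi}$ (vanishing of the $\tau\circ\varphi^n$-components); bijectivity is a Lang-type $u$-adic convergence argument, where the lower bound on $m$ is used. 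Granting this, $\operatorname{Stab}([w]) = \mathcal{K}_m^{\operatorname{trans}}$ for every $[w]\in\overline{W}$, so $\overline{W} \to M_\mu \otimes_{\mathcal{O}} \mathbb{F}$ is in fact a torsor for the finite-type group $L^+\operatorname{GL}_d^{(N_\tau),\operatorname{trans}}/\mathcal{K}_m^{\operatorname{trans}}$, of dimension $D_m$. Consequently $\overline{W}$ is of finite type and
$$
\dim Z^\mu_{d,\mathbb{F}} + D_m = \dim \overline{W} = \dim\bigl(M_\mu \otimes_{\mathcal{O}} \mathbb{F}\bigr) + D_m ,
$$
so cancelling $D_m$ yields the corollary.

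The main obstacle is exactly the claim that $g \mapsto gC\varphi(g)^{-1}C^{-1}$ is an isomorphism $\mathcal{K}_m^\varphi \xrightarrow{\ \sim\ } \mathcal{K}_m^{\operatorname{trans}}$ uniformly in $C$ — equivalently, identifying the kernel of the residual $\cdot_{\operatorname{trans}}$-action on $\overline{W}$ with $\mathcal{K}_m^{\operatorname{trans}}$ — since, unlike the map to $Z^\mu_{d,\mathbb{F}}$, the map $\overline{W} \to M_\mu \otimes_{\mathcal{O}} \mathbb{F}$ is only revealed to be a torsor after this analysis. Everything else (boundedness of $\widetilde{W}$ and $M_\mu$, normality and pro-unipotence of the congruence subgroups, compatibility of the obvious isomorphism with the congruence filtrations up to the explicit power-of-$p$ shift coming from $\varphi$, and the choice of a single uniform level $m$) is routine.
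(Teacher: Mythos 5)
Your proposal is correct and takes essentially the same route as the paper: both pass to quotients of $\widetilde{Z}_d^{(N_\tau)} \times_{\operatorname{Gr}} (M_\mu \otimes_{\mathcal{O}} \mathbb{F})$ by sufficiently deep congruence subgroups of the two stabilizer groups, identify the resulting finite-type quotient stacks (the paper cites \cite[15.6]{B23} for this identification, whereas you sketch the underlying Lang-type twisted-conjugation bijection $g \mapsto gC\varphi(g)^{-1}C^{-1}$ from $\mathcal{K}_m^\varphi$ to $\mathcal{K}_m^{\operatorname{trans}}$ directly), and then run the torsor dimension count. The only slip is the word ``commuting'': the $\cdot_\varphi$- and $\cdot_{\operatorname{trans}}$-actions do not commute, but this is harmless because the descent of the transitive $\cdot_{\operatorname{trans}}$-action to $\overline{W}$ follows from the coincidence of orbits that your twisted-conjugation isomorphism establishes, not from commutativity.
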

\begin{proof}
	For $M \geq 0$ let $U_M \subset L^+\operatorname{GL}_d$ denote the subgroup of matrices $\equiv 1$ modulo $u^M$. Set $U_{M}^{(N_\tau),\operatorname{trans}} = U_M \cap L^+\operatorname{GL}_d^{(N_\tau),\operatorname{trans}}$ and likewise for $U_M^{(N_\tau),\varphi}$. We claim there exists an $M\geq 0$ depending on $\mu$ so that 
	$$
	[\widetilde{Z}_d^{(N_\tau)} \times_{\operatorname{Gr}} (M_\mu \otimes_{\mathcal{O}} \mathbb{F})/U_{M}^{(N_\tau),\varphi}] \cong[ \widetilde{Z}_d^{(N_\tau)} \times_{\operatorname{Gr}} (M_\mu \otimes_{\mathcal{O}} \mathbb{F})/U_M^{(N_\tau),\operatorname{trans}}]
	$$
	with the quotients taken respectively for the $\cdot_\varphi$ and $\cdot_{\operatorname{trans}}$-action. Granting such an isomorphism we deduce
	$$
	\begin{aligned}
		\operatorname{dim} Z^\mu_{d,\mathbb{F}} &= \operatorname{dim} [\widetilde{Z}_d^{(N_\tau)} \times_{\operatorname{Gr}} (M_\mu \otimes_{\mathcal{O}} \mathbb{F})/U_{M}^{(N_\tau),\varphi}] - \operatorname{dim} L^+\operatorname{GL}_d^{(N_\tau),\varphi}/U_M^{(N_\tau),\varphi} \\
		&=\operatorname{dim} [\widetilde{Z}_d^{(N_\tau)} \times_{\operatorname{Gr}} (M_\mu \otimes_{\mathcal{O}} \mathbb{F})/U_{M}^{(N_\tau),\operatorname{trans}}] - \operatorname{dim} L^+\operatorname{GL}_d^{(N_\tau),\operatorname{trans}}/U_M^{(N_\tau),\operatorname{trans}}  \\
		&=   \operatorname{dim} M_\mu \otimes_{\mathcal{O}} \mathbb{F}	
	\end{aligned}
	$$
	where the second equality uses that the aforementioned isomorphism  $L^+\operatorname{GL}_d^{(N_\tau),\varphi} \cong L^+\operatorname{GL}_d^{(N_\tau),\operatorname{trans}}$ identifies $U_M^{(N_\tau),\varphi}$ and $U_M^{(N_\tau),\operatorname{trans}}$. To verify the isomorphism between the two quotients note that, after removing the $(N_\tau)$ superscripts this is just \cite[15.6]{B23}. The argument used in loc. cit. goes through identically when one only considers $(N_\tau)$-adapted bases.
\end{proof}

\section{Galois}
Here we finish the proof of (2) in Theorem~\ref{thm-main} by describing the fibres of  $Y_d^\mu \otimes_{\mathcal{O}} \mathbb{F} \rightarrow Z^\mu_{d,\mathbb{F}}$.

\begin{proposition}\label{prop-Galois}
	Let $A$ be an $\mathbb{F}$-algebra and $\mu$ a Hodge type for which there are tuples of non-negative integers $(N_\tau)_{\tau:K_0\hookrightarrow\mathcal{O}[\frac{1}{p}]}$ such that the grading on the $\kappa$-th part of $\mu$:
	\begin{itemize}
		\item  is concentrated in degree $0$ whenever $\kappa|_{K_0} = \tau \circ \varphi^n$ with $1 \leq n \leq N_\tau$
		\item	is concentrated in degree $[-a_\kappa,0]$ whenever $\kappa|_{K_0} = \tau$ for $a_\kappa$ satisfying
		$$
		\sum_{\kappa|_{K_0} = \tau} a_\kappa \leq (p^{N_\tau+1}-1)(1+\frac{e}{p-1})
		$$
		with the inequality strict for at least one $\tau:K_0 \hookrightarrow\mathcal{O}[\frac{1}{p}]$.
	\end{itemize}
Then any $\mathfrak{M}$ corresponding to an $A$-valued point of $Z^\mu_{d,\mathbb{F}}$ admits a unique continuous $\varphi$-equivariant $A_{\operatorname{inf},A}$-semilinear action of $G_K$ satisfying \eqref{eq-Galoisaction}.
\end{proposition}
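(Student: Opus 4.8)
The idea is to build the crystalline $G_K$-action explicitly from a monodromy operator, following the strategy of \cite[\S 20]{B23} but performed mod $p$ and with the additional $p$-divisibility supplied by $(N_\tau)$-adapted bases. Working locally on $\operatorname{Spec}A$ we may assume $\mathfrak{M}$ is $\mathfrak{S}_A$-free, and by Lemma~\ref{lem-adaptedbasis}(1) we fix an $(N_\tau)$-adapted basis $\iota$ of $\mathfrak{M}$ with $\Psi(\mathfrak{M},\iota)\in M_\mu(A)$; the shape of the Frobenius matrix $C$ (defined by $\varphi(\iota)=\iota C$) is then controlled by this membership. Uniqueness is the easy half and uses no hypothesis on $\mu$: any two crystalline actions have the same $A_{\operatorname{inf},A}$-semilinearity, so their ratio $h$ is an $A_{\operatorname{inf},A}$-linear, $\varphi$-equivariant automorphism of $\mathfrak{M}\otimes_{\mathfrak{S}_A}A_{\operatorname{inf},A}$ with $(h-\mathrm{id})(\mathfrak{M})\subseteq\mathfrak{M}\otimes_{\mathfrak{S}_A}u\varphi^{-1}(\mu)A_{\operatorname{inf},A}$; since $\varphi$ contracts this ideal into arbitrarily high powers of $u$, a standard successive-approximation argument forces $h=\mathrm{id}$.

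For existence, the first step is to produce the monodromy operator. Since $p=0$ in $A_{\operatorname{inf},A}$ when $A$ is an $\mathbb{F}$-algebra, the identity $p\,\varphi\circ N_{\nabla}=N_{\nabla}\circ\varphi$ degenerates to $N_{\nabla}\circ\varphi=0$, so there is a unique additive $\partial$-derivation $N_{\nabla}$ (with $\partial=u\,\tfrac{d}{du}$) on $\mathfrak{M}$, extended over the appropriate overring of $\mathfrak{S}_A$ (the mod-$p$ analogue of $\mathcal{O}^{\operatorname{rig}}_A[\tfrac{1}{\varphi(\lambda)}]$), that annihilates $\varphi(\mathfrak{M})$; it exists because $\varphi(\mathfrak{M})$ spans everything after inverting $E(u)$, and any crystalline action is forced to differentiate to it. The hypothesis on $\mu$ is precisely what makes $N_\nabla$ integral: transcribing the recurrence $XN=\partial(X)$ from the proof of Proposition~\ref{prop-intxi} into this setting, and using that $(N_\tau)$-adaptedness lets one replace $\varphi$ by $\varphi^{N_\tau+1}$ and thereby gain the factor $u^{p^{N_\tau+1}}$ exactly as in Proposition~\ref{prop-Nablaintegral}, one shows that the Taylor expansions of $N_\nabla$ around each $u=\kappa(\pi)$ vanish to order $\geq p^{N_\tau+1}-1$; combined with the smallness of $\sum_{\kappa|_{K_0}=\tau}a_\kappa$ this gives $N_{\nabla}(\mathfrak{M})\subseteq\mathfrak{M}\otimes_{\mathfrak{S}_A}u\varphi^{-1}(\mu)A_{\operatorname{inf},A}$. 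One then defines the $G_K$-action by exponentiating $N_\nabla$, setting
$$
\sigma(m)\;=\;\sum_{k\geq 0}\;c_k(\sigma)\,N_\nabla^{\,k}(m),
$$
where $\sigma\mapsto(c_k(\sigma))_k$ is the explicit system of elements of (a divided-power extension of) $A_{\operatorname{inf},A}$ recording the action of $\sigma$ on $\pi^{1/p^\infty}$, normalised so that it is compatible with the group law, so that $G_{K_\infty}$ acts as the identity on $\mathfrak{M}$, and so that $c_0(\sigma)=1$ and $c_k(\sigma)\in u\varphi^{-1}(\mu)A_{\operatorname{inf},A}$ for $k\geq 1$. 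Granting convergence, $\varphi$-equivariance follows from $p\,\varphi N_\nabla=N_\nabla\varphi$ together with the way $\varphi$ transforms the $c_k(\sigma)$, the cocycle identity follows from compatibility of $\sigma\mapsto(c_k(\sigma))_k$ with the group law and the fact that $N_\nabla$ is a single commuting derivation, and \eqref{eq-Galoisaction} is immediate from $N_\nabla(\mathfrak{M})\subseteq\mathfrak{M}\otimes u\varphi^{-1}(\mu)A_{\operatorname{inf},A}$.

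The main obstacle — and the only place the precise bound $\sum_{\kappa|_{K_0}=\tau}a_\kappa\leq(p^{N_\tau+1}-1)(1+\tfrac{e}{p-1})$, strict for some $\tau$, is used — is the convergence of the above series inside $\mathfrak{M}\otimes_{\mathfrak{S}_A}A_{\operatorname{inf},A}$ rather than merely in the divided-power extension. The $k$-th coefficient $c_k(\sigma)$ carries denominators of $u$-adic size $\approx k(1+\tfrac{e}{p-1})$ — note $1+\tfrac{e}{p-1}$ is exactly the $u$-adic valuation of the ideal $u\varphi^{-1}(\mu)$ in the crystalline condition — and these are cancelled only by the $\pi$-adic gain of order $p^{N_\tau+1}-1$ afforded by each application of $N_\nabla$. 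Balancing the two linear-in-$k$ quantities forces exactly the stated product bound, and when it holds the general term has $u$-adic valuation bounded below by a positive multiple of $k$, so the series converges $u$-adically; the strict inequality at one $\tau$ is what places us inside the open locus where this multiple is positive (at the boundary the series only marginally fails to converge, corresponding to the Steinberg-type weights excluded in Theorem~\ref{thm-main}). Quantitatively this is a refinement of the estimate behind Proposition~\ref{prop-intxi}, now applied simultaneously to all iterates $N_\nabla^{\,k}$ rather than to the first $p^{N_\tau+1}$ Taylor coefficients of a single matrix, and it should go through uniformly in families over an arbitrary $\mathbb{F}$-algebra $A$.
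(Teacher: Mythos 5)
Your overall strategy—construct a mod-$p$ monodromy operator $N_\nabla$ and exponentiate it—is genuinely different from the paper's, which never exponentiates anything: it writes the crystalline action as a cocycle $c(\sigma)$ satisfying $c(\sigma)\sigma(C)=C\varphi(c(\sigma))$, introduces the twisted-conjugation operator $\Omega_\sigma(M)=C\varphi(M)\sigma(C^{-1})$, shows it is topologically nilpotent on a subgroup $\mathcal{H}$ of matrices whose $\tau\circ\varphi$-parts are divisible by $u^{p^{N_\tau}(1+e/(p-1))}$, and obtains $c(\sigma)$ as $\lim_n\Omega_\sigma^n(C\sigma(C^{-1}))$. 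As written, your plan has two genuine gaps. First, your uniqueness argument is wrong as stated: you claim it ``uses no hypothesis on $\mu$'' because ``$\varphi$ contracts the ideal into arbitrarily high powers of $u$.'' But the successive-approximation operator is not $\varphi$ alone; it is $\Omega_\sigma$, and the conjugation by $\sigma(C^{-1})$ \emph{loses} $u$-adic valuation of order $\sum_{\kappa|_{K_0}=\tau}a_\kappa$ at each step (mod $p$ one only has $u^{\sum a_\kappa}C_\tau^{-1}\in\operatorname{Mat}(A[[u]])$). The net gain per iteration is $px-\sum a_\kappa$ versus $x$, and this is a contraction exactly when the stated bound holds, with the strict inequality at one $\tau$ supplying the topological nilpotence. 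Without the hypothesis on $\mu$ the ratio of two actions need not tend to the identity under iteration, so uniqueness is not ``the easy half.''

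Second, the existence half does not go through over an arbitrary $\mathbb{F}$-algebra $A$. The operator $N_\nabla$ of Section~\ref{sec-int} is constructed from the filtered $\varphi$-module $D$ via $\xi$, which requires a finite flat $\mathcal{O}$-algebra; an $A$-point of $Z^\mu_{d,\mathbb{F}}$ for general $A$ admits no such lift, and the integrality of $N_\nabla$ (Proposition~\ref{prop-Nablaintegral} only gives $N_\nabla(\mathfrak{M})\subset\mathfrak{M}\otimes\frac{u}{p}S_{\operatorname{max}}$, note the $1/p$) is precisely what cannot be taken for granted mod $p$. Defining $N_\nabla$ purely in characteristic $p$ by decreeing $N_\nabla(\varphi(\iota))=0$ and extending by the Leibniz rule forces you to control $\iota=\varphi(\iota')C^{-1}$, i.e.\ exactly the poles of $C^{-1}$ again, and you have not done this. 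Moreover the coefficients $c_k(\sigma)$ in your exponential are divided powers whose denominators are $p$-adic ($k!$), not $u$-adic; mod $p$ the series naturally lives in a divided-power extension of $A_{\operatorname{inf},A}$, whereas the definition of a crystalline $G_K$-action requires the action to preserve $\mathfrak{M}\otimes_{\mathfrak{S}_A}A_{\operatorname{inf},A}$ itself. Your ``balancing two linear-in-$k$ quantities'' heuristic conflates these two kinds of denominators and does not reproduce the actual source of the bound $(p^{N_\tau+1}-1)(1+\frac{e}{p-1})$, which in the paper is the stability inequality $p\cdot p^{N_\tau}(1+\frac{e}{p-1})-\sum a_\kappa\geq 1+\frac{e}{p-1}$ for $\Omega_\sigma$ acting on $\mathcal{H}$. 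The paper also needs one input you omit entirely: that $\Psi(\mathfrak{M},\iota)\in M_\mu(A)$ forces $C\sigma(C^{-1})-1$ to be divisible by $u^{1+e/(p-1)}$, which is the seed of the whole iteration.
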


The conditions imposed on $\mu$ here is again weaker than those in Proposition~\ref{prop-stability} or Theorem~\ref{thm-main} since $(p^{N_{\tau}+1}-1)(1+\frac{e}{p-1}) \geq p^{N_{\tau+1}} \geq \frac{p^{N_\tau}-1}{\nu}+1$.
\begin{proof}
	The idea is identical to that employed in \cite[\S11]{B21} or \cite[\S18]{B23}. Using the claimed uniqueness the action can be produced by glueing $G_K$-actions constructed on an open cover of $\operatorname{Spec}A$. Thus, we can assume that $\mathfrak{M}$ is $\mathfrak{S}_A$-free and so, by Lemma~\ref{lem-adaptedbasis} admits an $(N_\tau)$-adapted basis $\iota$. Then existence and uniqueness of a crystalline $G_K$-action is equivalent to the existence and uniqueness of a continuous cocycle
	$$
	c:G_K \rightarrow 1 + [\pi^\flat]\varphi^{-1}(\mu)\operatorname{Mat}(A_{\operatorname{inf},A}) = 1+ u^{1+\frac{e}{p-1}}\operatorname{Mat}(A_{\operatorname{inf,A}})
	$$
	satisfying $c(\sigma) \sigma(C) = C \varphi(c(\sigma))$ for $C$ defined by $\varphi(\iota) = \iota C$. The equality in the displayed equation above follows from the fact that $A$ is an $\mathbb{F}$-algebra and so the ideal of $A_{\operatorname{inf},A}$ generated by $\varphi^{-1}(\mu)$ equals that generated by $u^{e/(p-1)}$ (see \cite[5.1.3]{Fon94b}).
	
	For the rest of the proof we may, and do, assume that the $N_\tau$ are chosen as large as possible for the assumptions of the proposition to hold. This means that if $N_\tau >0$ then $N_\tau = N_{\tau \circ \varphi}+1$.
	
	\begin{observation}
		The assumptions on $\mu$ ensure the $\tau \circ \varphi$-th part of any such $c(\sigma)-1$ is divisible by $u^{p^{N_{\tau}}(1+\frac{e}{p-1})}$.
	\end{observation}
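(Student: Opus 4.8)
The plan is to use the fact that an $(N_\tau)$-adapted basis trivialises the Frobenius matrix along the string of embeddings $\tau\circ\varphi^{N_\tau+1},\ldots,\tau\circ\varphi$, so that the cocycle relation degenerates there into an iterated $\varphi$-pullback; the weak divisibility recorded in the containment $c(\sigma)\in 1+u^{1+e/(p-1)}\operatorname{Mat}(A_{\operatorname{inf},A})$ then propagates down this string, gaining a factor of $p$ in the $u$-adic order at each application of $\varphi$.

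Concretely, I would first decompose $A_{\operatorname{inf},A}\cong\prod_\tau A_{\operatorname{inf},A,\tau}$ along the idempotents of $W(k)\otimes_{\mathbb{Z}_p}A$. Since $G_K$ fixes $W(k)$ and $\varphi$ carries the $\tau\circ\varphi$-component onto the $\tau$-component, the relation $c(\sigma)\sigma(C)=C\varphi(c(\sigma))$ reads on the $\tau$-component as
\[
c(\sigma)_\tau\,\sigma(C_\tau)=C_\tau\,\varphi\bigl(c(\sigma)_{\tau\circ\varphi}\bigr),
\]
with $C_\tau$ the matrix of Frobenius $\mathfrak{M}_{\tau\circ\varphi}\to\mathfrak{M}_\tau$ in the basis $\iota$ (i.e.\ $\varphi(\iota_{\tau\circ\varphi})=\iota_\tau C_\tau$). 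Because $\iota$ is $(N_\tau)$-adapted one has $\varphi(\iota_{\tau\circ\varphi^{m+1}})=\iota_{\tau\circ\varphi^m}$, hence $C_{\tau\circ\varphi^m}=1$, for $1\le m\le N_\tau$; substituting into the relation at the $\tau\circ\varphi^m$-component gives $c(\sigma)_{\tau\circ\varphi^m}=\varphi\bigl(c(\sigma)_{\tau\circ\varphi^{m+1}}\bigr)$, and iterating from $m=1$ to $m=N_\tau$ yields $c(\sigma)_{\tau\circ\varphi}=\varphi^{N_\tau}\bigl(c(\sigma)_{\tau\circ\varphi^{N_\tau+1}}\bigr)$.

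Finally, the containment defining $c$ gives, on the $\tau\circ\varphi^{N_\tau+1}$-component, $c(\sigma)_{\tau\circ\varphi^{N_\tau+1}}-1\in u^{1+e/(p-1)}\operatorname{Mat}(A_{\operatorname{inf},A,\tau\circ\varphi^{N_\tau+1}})$; since $\varphi$ multiplies $u$-adic orders by $p$, applying $\varphi^{N_\tau}$ to the identity just obtained shows $c(\sigma)_{\tau\circ\varphi}-1\in u^{p^{N_\tau}(1+e/(p-1))}\operatorname{Mat}(A_{\operatorname{inf},A,\tau\circ\varphi})$, which is the assertion. I do not expect any genuine obstacle here: the content lies entirely in the adapted-basis condition forcing the intermediate Frobenius matrices to be the identity, and the only things to keep straight are the direction in which $\varphi$ permutes the embeddings and the (trivial) case $N_\tau=0$, where the claim reduces to the defining containment for $c$.
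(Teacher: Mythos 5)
Your argument is correct and is essentially the paper's own proof: both exploit that the $(N_\tau)$-adapted basis gives $\iota_{\tau\circ\varphi}=\varphi^{N_\tau}(\iota_{\tau\circ\varphi^{N_\tau+1}})$ (equivalently, the intermediate Frobenius matrices are the identity), so that $\varphi$-equivariance forces $c(\sigma)_{\tau\circ\varphi}-1=\varphi^{N_\tau}\bigl(c(\sigma)_{\tau\circ\varphi^{N_\tau+1}}-1\bigr)$, and then the defining containment plus the fact that $\varphi$ multiplies $u$-adic order by $p$ gives the claimed divisibility. You merely write out componentwise the matrix bookkeeping that the paper phrases directly in terms of $(\sigma-1)$ applied to basis elements.
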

\begin{proof}[Proof of Observation]
	Since $\iota$ is $(N_\tau)$-adapted the $\tau \circ \varphi$-th part of $\iota$ can be written as $\varphi^{N_\tau}$ of elements in $\mathfrak{M}$. Thus, if $\sigma(m)$ denotes the semilinear action of $G_K$ on $\mathfrak{M} \otimes_{\mathfrak{S}_A} A_{\operatorname{inf},A}$ induced by the cocycle $c$, then $(\sigma-1)(\iota_{\tau \circ \varphi})$ is contained in the image of $\mathfrak{M} \otimes u^{1+\frac{e}{p-1}} A_{\operatorname{inf},A}$ under $\varphi^{N_\tau}$. We conclude that the $\tau \circ \varphi$-th part of $c(\sigma)-1$ has $u$-adic valuation $\geq p^{N_\tau}(1+\frac{e}{p-1})$ as claimed.
\end{proof}

	Motivated by this observation write $\mathcal{H} \subset \operatorname{Mat}(A_{\operatorname{inf},A})$ for the subgroup of matrices whose $\tau \circ \varphi$-th part is divisible by $u^{p^{N_\tau}(1+\frac{e}{p-1})}$. We claim that:
	\begin{enumerate}
		\item  $\mathcal{H}$ is stabilised by the operator $\Omega_\sigma$ defined by $M \mapsto C\varphi(M)\sigma(C^{-1})$
		\item  $\Omega_\sigma$ is topologically nilpotent on $\mathcal{H}$
		\item If $M \in \operatorname{Mat}(A_{\operatorname{inf},A})$ has entries divisible by $u^{1+\frac{e}{p-1}}$ then $\Omega_\sigma^n(M) \in \mathcal{H}$ for $n \geq \operatorname{max}_\tau{N_\tau}$. 
	\end{enumerate}To verify these points observe that the $\tau$-th part of $\Omega_\sigma(M)$ can be written as $C_{\tau} \varphi(M_{\tau\circ \varphi})\sigma(C_{\tau}^{-1})$
	where $C_\tau$ and $M_\tau$ denotes the $\tau$-th parts of $C$ and $M$. There are then two cases to consider. If $N_{\tau \circ \varphi^{-1}} >0$ then $C_\tau \in \operatorname{GL}_n(A[[u]])$ and so the $\tau\circ \varphi$-th part of $M$ being divisible by $u^x$ implies the $\tau$-th part of $\Omega_\sigma(M)$ has $u$-adic valuation $\geq px$. On the other hand, if $N_{\tau \circ \varphi^{-1}} =0$ then we know $u^{\sum_{\kappa|_{K_0}=\tau} a_\kappa}C^{-1}_\tau \in \operatorname{Mat}(A[[u]])$ and so the $\tau\circ \varphi$-th part of $M$ being divisible by $u^x$ implies the $\tau$-th part of $\Omega_\sigma(M)$ has $u$-adic valuation $\geq px -\sum_{\kappa|_{K_0} =\tau} a_\kappa$. If $x = p^{N_\tau}(1+\frac{e}{p-1})$ then in the first case $px = (p^{N_\tau+1})(1+\frac{e}{p-1}) = p^{N_\tau \circ \varphi^{-1}}(1+\frac{e}{p-1})$ and in the second $px =  p^{N_\tau+1}(1+\frac{e}{p-1}) - \sum_{\kappa|_{K_0} = \tau} a_{\kappa} \geq 1+\frac{e}{p-1}$. This proves the stability in (1). Since $\sum a_\kappa > (p^{N_\tau+1}-1)(1+\frac{e}{p-1})$ for at least one $\tau$ it also shows that topologically nilpotence in (2). (3) also follows easily from these observations.
	
	Claims (1) and (2) already suffice to prove uniqueness of the cocycle $c(\sigma)$. If $d(\sigma)$ is another such cocycle then $c(\sigma)-d(\sigma) \in \mathcal{H}$ and so
	$$
	0 =\lim_{n \rightarrow \infty} \Omega_\sigma^n(c(\sigma)-d(\sigma)) = c(\sigma)- d(\sigma)
	$$
	For existence, it follows from \cite[7.7]{B21} combined with the construction in \cite[11.2]{B21} that $(\mathfrak{M},\iota) \in \Psi^{-1}(M_\mu \otimes_{\mathcal{O}} \mathbb{F})$ ensures $C \sigma(C)^{-1}-1$ has entries divisible by $u^{1+\frac{e}{p-1}}$. Therefore (2) and (3) together ensure $\Omega_\sigma^n(C\sigma(C^{-1})-1)$ converges to zero in $\mathcal{H}$ as $n \rightarrow \infty$. Therefore $\Omega^n_\sigma(C\sigma(C^{-1}))$ is a Cauchy sequence and take $c(\sigma)$ as its limit. Since $\Omega_\sigma(c(\sigma)) = c(\sigma)$ we have $c(\sigma) \sigma(C) = C \varphi(c(\sigma))$. As $\sigma \mapsto C\sigma(C^{-1})$ is evidently a continuous cocycle the same is true of $\sigma \mapsto c(\sigma)$ which finishes the proof.
\end{proof}
\begin{corollary}\label{cor-closedimm}
	Assume that $\mu$ is as in Theorem~\ref{thm-main}. Then the factorisation from Corollary~\ref{cor-factor} induces a closed immersion $Y^\mu_d \otimes_{\mathcal{O}} \mathbb{F} \hookrightarrow Z^\mu_{d,\mathbb{F}}$ as in (2) of Theorem~\ref{thm-main}.
\end{corollary}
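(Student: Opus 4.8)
The plan is to deduce this purely formally from Corollary~\ref{cor-factor}, Proposition~\ref{prop-stability} and Proposition~\ref{prop-Galois} (whose hypotheses on $\mu$ are all implied by those of Theorem~\ref{thm-main}, as noted after each of those statements), using only standard stability properties of closed immersions of algebraic stacks. First I would introduce the auxiliary stack
$$
\mathcal{Y} := (Y_d \otimes_{\mathcal{O}} \mathbb{F}) \times_{Z_d \otimes_{\mathcal{O}} \mathbb{F}} Z^\mu_{d,\mathbb{F}},
$$
i.e.\ the stack whose $A$-points are $\mathfrak{M} \in Z^\mu_{d,\mathbb{F}}(A)$ equipped with a crystalline $G_K$-action satisfying \eqref{eq-Galoisaction}. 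Since $Z^\mu_{d,\mathbb{F}} \hookrightarrow Z_d \otimes_{\mathcal{O}} \mathbb{F}$ is a closed immersion by Proposition~\ref{prop-stability}, its base change, namely the first projection $q \colon \mathcal{Y} \to Y_d \otimes_{\mathcal{O}} \mathbb{F}$, is a closed immersion as well. On the other hand Proposition~\ref{prop-Galois} asserts exactly that the second projection $p \colon \mathcal{Y} \to Z^\mu_{d,\mathbb{F}}$, which forgets the $G_K$-action, is an equivalence: existence of the action gives essential surjectivity, while uniqueness of the action (which moreover forces every isomorphism of underlying Breuil--Kisin modules to intertwine the unique actions, and is compatible with base change) shows $p$ is fully faithful and furnishes a quasi-inverse.

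Next, Corollary~\ref{cor-factor} provides a morphism $\alpha \colon Y^\mu_d \otimes_{\mathcal{O}} \mathbb{F} \to Z^\mu_{d,\mathbb{F}}$ whose composite with $Z^\mu_{d,\mathbb{F}} \hookrightarrow Z_d \otimes_{\mathcal{O}} \mathbb{F}$ is the canonical forgetful morphism $Y^\mu_d \otimes_{\mathcal{O}} \mathbb{F} \to Z_d \otimes_{\mathcal{O}} \mathbb{F}$. Together with the canonical closed immersion $\iota \colon Y^\mu_d \otimes_{\mathcal{O}} \mathbb{F} \hookrightarrow Y_d \otimes_{\mathcal{O}} \mathbb{F}$ (the base change of the closed subfunctor $Y^\mu_d \subset Y_d$ from Section~\ref{sec-crysBK}), the pair $(\iota,\alpha)$ satisfies the compatibility required by the universal property of $\mathcal{Y}$ (both composites to $Z_d \otimes_{\mathcal{O}} \mathbb{F}$ agree with the forgetful morphism), and hence determines a morphism $\beta \colon Y^\mu_d \otimes_{\mathcal{O}} \mathbb{F} \to \mathcal{Y}$ with $q \circ \beta = \iota$ and $p \circ \beta = \alpha$.

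Finally I would run the usual diagram chase. Since $q \circ \beta = \iota$ is a closed immersion and $q$ is separated (being itself a closed immersion), $\beta$ is a closed immersion; composing with the equivalence $p$ then shows that $\alpha = p \circ \beta$ is a closed immersion, which is exactly the closed immersion $Y^\mu_d \otimes_{\mathcal{O}} \mathbb{F} \hookrightarrow Z^\mu_{d,\mathbb{F}}$ asserted in (2) of Theorem~\ref{thm-main}. I do not expect a genuine obstacle here: all the real work --- the integrality estimates underlying Corollary~\ref{cor-factor} and the cocycle construction of Proposition~\ref{prop-Galois} --- has already been carried out, and what remains is formal. The only points needing a little care are verifying that the quasi-inverse of $p$ is genuinely functorial and base-change compatible (immediate from the uniqueness clause of Proposition~\ref{prop-Galois}) and invoking the stacky form of the statement that if $g \circ f$ is a closed immersion and $g$ is separated then $f$ is a closed immersion (see \cite{stacks-project}).
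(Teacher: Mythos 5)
Your argument is correct and is essentially the paper's own proof, just packaged via the fibre product $\mathcal{Y}$ rather than by directly identifying $Z^\mu_{d,\mathbb{F}}$ with a closed substack of $Y_d$ (via the unique Galois action) and then applying the cancellation property for closed immersions. The one point you should make explicit is the strictness clause in Proposition~\ref{prop-Galois}: the remark following that proposition only compares the bounds, so one must check that $\frac{p^{N_\tau+1}-1}{\nu}+1 < (p^{N_\tau+1}-1)(1+\frac{e}{p-1})$ except when $N_\tau=0$ and $\nu=1$, in which case the extra hypothesis of Theorem~\ref{thm-main} supplies the required strict inequality for at least one $\tau$.
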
\begin{proof}
Notice that  $\frac{p^{N_\tau+1}}{\nu}+ 1 < (p^{N_\tau+1} - 1)(1+\frac{e}{p-1})$
unless $N_\tau = 0$ and $\nu =1$. Therefore the assumptions on $\mu$ from Theorem~\ref{thm-main} allow Proposition~\ref{prop-Galois} to be applied. This allows $Z^{\mu}_{d,\mathbb{F}}$ to be identified with a closed substack of $Y_d$. Since the composite $Y^\mu_{d} \otimes_{\mathcal{O}} \mathbb{F} \rightarrow Z^\mu_{d,\mathbb{F}} \rightarrow Y^\mu_d$ is also a closed immersion we conclude $Y^\mu_d \otimes_{\mathcal{O}} \mathbb{F} \hookrightarrow Z^\mu_{d,\mathbb{F}}$ is also.
\end{proof}
\section{Irreducibility when $K = K_0$}

At this point parts (1) and (2) of Theorem~\ref{thm-main} have been proved. It remains only to establish (3). Namely, we need to prove:
\begin{proposition}\label{thm-isom}
	Assume $K = K_0$, that $K_\infty \cap K(\epsilon_\infty) = K$, and that $\mu$ is as in Theorem~\ref{thm-main}. Then the closed immersion from $Y^\mu_d \otimes_{\mathcal{O}} \mathbb{F} \hookrightarrow Z^\mu_{d,\mathbb{F}}$ is an isomorphism.
\end{proposition}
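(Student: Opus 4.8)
Throughout write $i\colon Y^\mu_d \otimes_{\mathcal{O}} \mathbb{F} \hookrightarrow Z^\mu_{d,\mathbb{F}}$ for the closed immersion already produced in Corollary~\ref{cor-closedimm}. The plan is to prove $i$ is an isomorphism by a dimension count: I will show $Z^\mu_{d,\mathbb{F}}$ is smooth and irreducible over $\mathbb{F}$ of dimension $\operatorname{dim}\operatorname{FL}_\mu$, that $Y^\mu_d \otimes_{\mathcal{O}} \mathbb{F}$ has the same dimension, and then invoke the elementary fact that a closed immersion of a closed substack of full dimension into an irreducible reduced algebraic stack is an isomorphism. One could instead try to build the inverse of $i$ directly from Proposition~\ref{prop-Galois}, sending $\mathfrak{M}$ to $\mathfrak{M}$ equipped with its unique crystalline $G_K$-action; but checking that this lands in the closed substack $Y^\mu_d$ rather than just in $Y_d$ — the Hodge type being a condition on the generic fibre — comes down to the same dimension comparison, so I prefer the geometric argument.

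The one step genuinely using $K = K_0$ is the smoothness and irreducibility of $Z^\mu_{d,\mathbb{F}}$. Recall from the Construction section that $Z^\mu_{d,\mathbb{F}}$ is obtained by descending
$$
\mathcal{Y} := \widetilde{Z}_d^{(N_\tau)} \times_{\operatorname{Gr}} \bigl(M_\mu \otimes_{\mathcal{O}} \mathbb{F}\bigr)
$$
along the $L^+\operatorname{GL}_d^{(N_\tau),\varphi}$-torsor $\Gamma$. When $K = K_0$, Lemma~\ref{lem-MmuK=K0} identifies $M_\mu$ with $\operatorname{FL}_{\mu,\mathcal{O}}$, which is smooth, irreducible and projective over $\mathcal{O}$, so $M_\mu \otimes_{\mathcal{O}} \mathbb{F}$ is smooth and irreducible over $\mathbb{F}$. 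Since $\Psi$ is a torsor for the smooth connected (pro-)algebraic group $L^+\operatorname{GL}_d^{(N_\tau),\operatorname{trans}}$, so is the projection $\mathcal{Y} \to M_\mu \otimes_{\mathcal{O}} \mathbb{F}$, whence $\mathcal{Y}$ is smooth and irreducible over $\mathbb{F}$. Passing to the quotient of $\mathcal{Y}$ by the smooth group $L^+\operatorname{GL}_d^{(N_\tau),\varphi}$ — and truncating to finite level exactly as in the proof of the corollary above computing $\operatorname{dim}Z^\mu_{d,\mathbb{F}}$ — one finds $Z^\mu_{d,\mathbb{F}}$ smooth and irreducible over $\mathbb{F}$, of dimension $\operatorname{dim}\operatorname{FL}_\mu$ by part~(1) of Theorem~\ref{thm-main}. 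In particular $Z^\mu_{d,\mathbb{F}}$ is reduced. (For ramified $K$ this breaks down, since, as explained in the introduction, the $M_\mu$ then admit no such flag-variety description.)

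For the dimension of $Y^\mu_d \otimes_{\mathcal{O}} \mathbb{F}$, the closed immersion $i$ gives $\operatorname{dim}\bigl(Y^\mu_d \otimes_{\mathcal{O}} \mathbb{F}\bigr) \leq \operatorname{dim}\operatorname{FL}_\mu$. Conversely, $Y^\mu_d$ is $\mathcal{O}$-flat and of topologically finite type, so the dimension of its special fibre is at least that of its generic fibre $Y^\mu_d \otimes_{\mathcal{O}} \mathcal{O}[\tfrac{1}{p}]$; and the generic fibre has dimension $\operatorname{dim}\operatorname{FL}_\mu$, being an open substack of an $\operatorname{FL}_\mu$-bundle over the (zero-dimensional) moduli stack of rank $d$ $\varphi$-modules — equivalently, it is the moduli of weakly admissible filtered $\varphi$-modules of Hodge type $\mu$, whose dimension is classical (see \cites{EG19,B23}). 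Hence $\operatorname{dim}\bigl(Y^\mu_d \otimes_{\mathcal{O}} \mathbb{F}\bigr) = \operatorname{dim}\operatorname{FL}_\mu = \operatorname{dim}Z^\mu_{d,\mathbb{F}}$.

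Finally, $i$ exhibits $Y^\mu_d \otimes_{\mathcal{O}} \mathbb{F}$ as a closed substack of the irreducible stack $Z^\mu_{d,\mathbb{F}}$ of the same dimension, so its image contains the generic point and therefore equals $Z^\mu_{d,\mathbb{F}}$ as a topological space; since $Z^\mu_{d,\mathbb{F}}$ is reduced the ideal sheaf cutting out the image lies in the vanishing nilradical, so $i$ is an isomorphism. I expect the entire difficulty to be concentrated in the second paragraph — establishing that $Z^\mu_{d,\mathbb{F}}$ is smooth and irreducible — which rests squarely on the identification $M_\mu \cong \operatorname{FL}_{\mu,\mathcal{O}}$ of Lemma~\ref{lem-MmuK=K0}, available only for $K = K_0$; the remaining steps are formal once the standard dimension of the generic fibre is granted.
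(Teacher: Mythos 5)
Your proof is correct and follows essentially the same route as the paper: deduce that $Z^\mu_{d,\mathbb{F}}$ is irreducible and reduced from Lemma~\ref{lem-MmuK=K0} together with the two torsor structures on $\widetilde{Z}_d^{(N_\tau)} \times_{\operatorname{Gr}} (M_\mu \otimes_{\mathcal{O}} \mathbb{F})$, and then conclude that a closed immersion between stacks of equal dimension must be an isomorphism. The only divergence is in how the dimension of $Y^\mu_d \otimes_{\mathcal{O}} \mathbb{F}$ is obtained: the paper simply cites \cite[16.7]{B23}, whereas you sketch it via $\mathcal{O}$-flatness and the identification of the generic fibre with the (open, weakly admissible locus of the) $\operatorname{FL}_\mu$-bundle over the zero-dimensional stack of isocrystals --- a heuristic that is correct in substance but would need the dimension theory for $p$-adic formal stacks and their adic generic fibres to be made precise.
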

\begin{proof}
By \cite[16.7]{B23} the morphism $Y^\mu_d \otimes_{\mathcal{O}} \mathbb{F} \hookrightarrow Z^\mu_{d,\mathbb{F}}$ is a closed immersion between finite type algebraic stacks over $\mathbb{F}$ of the same dimension. This will necessarily be an isomorphism whenever $Z^\mu_{d,\mathbb{F}}$ is irreducible and reduced. Lemma~\ref{lem-MmuK=K0} says that $M_\mu \otimes_{\mathcal{O}} \mathbb{F}$ is irreducible and reduced. As the arrows in
$$
	Z_{d,\mathbb{F}}^\mu \leftarrow \widetilde{Z}_d^{(N_\tau)} \times_{\operatorname{Gr}} (M_\mu \otimes_{\mathcal{O}} \mathbb{F}) \rightarrow  M_\mu \otimes_{\mathcal{O}} \mathbb{F}
$$
are respectively $L^+\operatorname{GL}_d^{(N_\tau),\varphi}$ and $L^+\operatorname{GL}_d^{(N_\tau),\operatorname{trans}}$-torsors (both of which are smooth and irreducible) the irreducibility and reducedness of  $Z^\mu_{d,\mathbb{F}}$ from the analogous assertions for $M_\mu \otimes_{\mathcal{O}} \mathbb{F}$.
\end{proof}

	\bibliography{/home/user/Dropbox/Maths/biblio.bib}
\end{document}